\theoremstyle{plain} 
\newtheorem{theorem}{Theorem}[section]
\newtheorem{lemma}[theorem]{Lemma}
\theoremstyle{definition} 
\newtheorem{definition}[theorem]{Definition} 
\newtheorem{assumption}[theorem]{Assumption}
\newtheorem{example}{Example}[section]
\Crefname{assumption}{Assumption}{Assumptions}
\theoremstyle{remark} 
\newtheorem{remark}[theorem]{Remark}
\numberwithin{theorem}{section}
\newcommand{\boldone}{\mathbf{1}}
\newcommand{\boldzero}{\mathbf{0}}
\DeclarePairedDelimiter\norm{\lVert}{\rVert}
\DeclarePairedDelimiter\fnorm{\lVert}{\rVert_{\rm{F}}}
\DeclarePairedDelimiter\opnorm{\lVert}{\rVert_{\rm{op}}}
\def\cbK{\check{\mathbold{K}}}
\def\cbTheta{\check{\mathbold{\Theta}}}
\def\bUpsilon{\mathbold{\Upsilon}}
\def\tbUpsilon{\widetilde{\mathbold{\Upsilon}}}
\def\tD{{\tilde D}}
\def\tF{{\tilde F}}
\def\tcalD{{\widetilde{\cal D}}}
\def\tDelta{{\tilde \Delta}}
\def\defas{\stackrel{\text{\tiny def}}{=}}
\def\sign{{\rm sign}}
\def\argmin{\mathop{\rm arg\, min}}
\DeclareMathOperator{\soft}{soft}
\DeclareMathOperator{\trace}{Tr}
\let\vec\relax
\DeclareMathOperator{\vec}{\mathbf{vec}}
\def\iid{i.i.d.\@\xspace}
\def\ie{i.e.\@\xspace}
\def\R{{\mathbb{R}}}
\def\E{{\mathbb{E}}}
\def\P{{\mathbb{P}}}
\def\mathbold{\boldsymbol} 
\def\cal{\mathcal} 
\def\ba{\mathbold{a}}
\def\bA{\mathbold{A}}
\def\hbA{\smash{\widehat{\mathbf A}}}\def\tbA{{\widetilde{\mathbf A}}}
\def\bb{\mathbold{b}}
\def\hbb{{\widehat{\bb}}}
\def\bD{\mathbold{D}}\def\calD{{\cal D}}
\def\tbD{{\widetilde{\bD}}}
\def\bE{\mathbold{E}}
\def\tbE{{\widetilde{\bE}}}
\def\bF{\mathbold{F}}
\def\tbF{{\widetilde{\bF}}}
\def\bG{\mathbold{G}}
\def\bH{\mathbold{H}}
\def\bI{\mathbold{I}}
\def\bK{\mathbold{K}}\def\hbK{{\widehat{\bK}}}\def\tbK{{\widetilde{\bK}}}
\def\bL{\mathbold{L}}\def\hbL{{\widehat{\bL}}}
\def\calM{{\cal M}}
\def\calN{{\cal N}}
\def\bP{\mathbold{P}}
\def\br{\mathbold{r}}
\def\bR{\mathbold{R}}
\def\bS{\mathbold{S}}
\def\calS{{\cal S}}
\def\be{\mathbold{e}}
\def\bu{\mathbold{u}}
\def\bU{\mathbold{U}}
\def\bv{\mathbold{v}}
\def\bV{\mathbold{V}}
\def\bW{\mathbold{W}}\def\tbW{{\widetilde{\bW}}}
\def\bx{\mathbold{x}}
\def\bX{\mathbold{X}}
\def\by{\mathbold{y}}
\def\bZ{\mathbold{Z}}
\def\bGamma{\mathbold{\Gamma}}
\def\ep{\varepsilon}\def\veps{\varepsilon}
\def\bep{ {\mathbold{\ep} }}
\def\btheta{\mathbold{\theta}}
\def\bTheta{\mathbold{\Theta}}
\def\bLambda{\mathbold{\Lambda}}\def\hbLambda{{\widehat{\bLambda}}}
\def\brho{\mathbold{\rho}}
\def\bSigma{\mathbold{\Sigma}}
\def\bphi{\mathbold{\phi}}
\def\bpsi{\mathbold{\psi}}
\title{Estimating Generalization Performance Along the Trajectory of Proximal SGD in Robust Regression
}
\author{%
  Kai Tan\\
  Department of Statistics\\
  Rutgers University\\
  Piscataway, NJ 08854 \\
  \texttt{kai.tan@rutgers.edu} \\
  \And
  Pierre C. Bellec \\
  Department of Statistics\\
  Rutgers University\\
  Piscataway, NJ 08854 \\
  \texttt{pierre.bellec@rutgers.edu} \\
}
\begin{document}

\maketitle

\begin{abstract}
	This paper studies the generalization performance of iterates obtained by Gradient Descent (GD), Stochastic Gradient Descent (SGD) and their proximal variants in high-dimensional robust regression problems. The number of features is comparable to the sample size and errors may be heavy-tailed. We introduce estimators that precisely track the generalization error of the iterates along the trajectory of the iterative algorithm. These estimators are provably consistent under suitable conditions. The results are illustrated through several examples, including Huber regression, pseudo-Huber regression, and their penalized variants with non-smooth regularizer. We provide explicit generalization error estimates for iterates generated from GD and SGD, or from proximal SGD in the presence of a non-smooth regularizer. The proposed risk estimates serve as effective proxies for the actual generalization error, allowing us to determine the optimal stopping iteration that minimizes the generalization error. Extensive simulations confirm the effectiveness of the proposed generalization error estimates.
  \end{abstract}

\section{Introduction} \label{sec:intro}
Consider the linear model: 
\begin{equation}\label{eq:linear-model}
	\by = \bX \bb^* + \bep,
\end{equation}
where $\by\in\R^n$ is the response vector, $\bX\in\R^{n\times p}$ is the design matrix, $\bb^*\in\R^p$ is the unknown regression vector, and $\bep\in \R^n$ is the noise vector that we assume independent of $\bX$. 
The entries of $\bep$ may be heavy-tailed, for instance our working assumptions allow for infinite second moment.

For the estimation of $\bb^*$, we 
consider the following regularized optimization problem
\begin{equation}\label{eq:hat-b}
	\hbb 
	\in \argmin_{\bb\in \R^p}
	\frac1n \sum_{i=1}^n \rho(y_i - \bx_i^\top \bb) + g(\bb), 
\end{equation}
where $\rho:\R \to \R$ is a data-fitting loss and $g: \R^p \to \R$ is a regularization function.
In the present robust regression setting, typical examples of $\rho$ include 
the Huber \cite{huber2004robust} loss $\rho(r;\delta) = \delta^2 \int_{0}^{|r/\delta|} \min(1, x) \, \text{d}x$, the Pseudo-Huber loss $\rho(r;\delta) = \delta^2 (\sqrt{1 + (r/\delta)^2} - 1)$ or other Lipschitz loss functions to combat the possible
heavy-tails of the additive noise.
Typical examples of penalty functions include the L1/Lasso
\cite{tibshirani1996regression}
penalty
$g(\bb) = \lambda \norm{\bb}_1$, group-Lasso penalty \cite{yuan2006model} for grouped variables,
or their non-convex variants including for instance SCAD \cite{fan2001variable} or MCP
\cite{zhang10-mc+}.

In order to solve the optimization problem \eqref{eq:hat-b}, 
practitioners resort to
iterative algorithms, for instance gradient descent, accelerated gradient descent, stochastic gradient descent, and the corresponding proximal methods
\cite{parikh2014proximal}
in the presence of a non-smooth regularizer.
Let the algorithm starts with some initializer $\hbb^1\in \R^p$ (typically $\hbb^1 = \boldzero$) followed by consecutive iterates $\hbb^2, \hbb^3,\dots$, 
where
$\hbb^t$ is typically obtained, for gradient descent and its variants
as will be detailed below, 
from $\hbb^{t-1}$ and by applying an additive correction
involving the gradient of the objective function.
Our goal of this paper is to quantify the predictive performance of each iterate $\hbb^t$. 

We assume throughout that the covariance $\E[\bx_i \bx_i^\top]=\bSigma$
of the feature vectors is finite. We measure the predictive
performance of $\hbb^t$ using the out-of-sample error
$$
\E\Bigl[
    \Bigl(\bx_{new}^\top\hbb^t - \bx_{new}^\top \bb^*\Bigr)^2
\mid (\bx_i,y_i)_{i\in[n]}
\Bigr]
=
\|\bSigma^{1/2}(\hbb^t-\bb^*)\|^2
$$
where $\bx_{new}$ is a new feature vector, independent of the data
$(\bx_i,y_i)_{i\in[n]}$
and has the same distribution as $\bx_i$.
The above squared metric is used because the noise $\ep_i$ (and thus $y_i$)
is allowed to have infinite variance, and in this case
the squared prediction error
$\E[
    (\bx_{new}^\top\hbb^t - y_{new})^2
\mid (\bx_i,y_i)_{i\in[n]}
]=+\infty$ irrespective of the value of $\hbb^t$. 

The paper proposes to estimate the out-of-sample error
$\|\bSigma^{1/2}(\hbb^t-\bb^*)\|^2$ of the $t$-th iterate
using the right-hand side of the approximation
\begin{align}
	\norm{\bSigma^{1/2}(\hbb^t - \bb^*)}^2
	+ \norm{\bep}^2/n
	\approx 
	\Big\|{(\by-\bX\hbb^t) + \sum_{s=1}^{t-1} w_{t,s} \bS_s \bpsi(\by-\bX\hbb^s)} \Big\|^2 /n,
        \label{approx}
\end{align}
where $\bpsi:\R^n \to \R$ is the derivative of $\rho$ acting component-wise on each coordinate in $\R^n$
and $\bS_s$ is a diagonal matrix of the form
$\bS_s = \sum_{i\in I_s} \be_i\be_i^\top$ where $I_s\subset [n]$
is the batch for the $s$-th stochastic gradient update
and $\be_i\in\R^n$ is the $i$-th canonical basis vector.
Here the $w_{t,s}$ are quantities, introduced in \Cref{sec:main} below, that can be computed from data and do not require the knowledge of $\bSigma$. 
The approximation \eqref{approx} is made rigorous in \Cref{thm:using-Sigma},
where the right-hand side is proved to be consistent (i.e., the difference
between the two sides of the inequality converges to 0 in probability)
for a first set of weights $(w_{s,t})_{s<t}$,
and in \Cref{thm:unknwon-Sigma} where a second set of weights are proposed.

Because the right-hand side of \eqref{approx} is observable from the data
and the iterates $(\hbb^s)_{s\le t}$ are computed from the iterative algorithm,
the approximation \eqref{approx} lets us compare the out-of-sample error
of iterates $\hbb^t$ at different time $t$ up to the additive term
$\|\bep\|^2/n$ (which does not depend on $t$ nor on the choice of
the iterative scheme or the choice of loss and penalty).
It also lets us compare different tuning parameters, for instance
learning rate, multiplicative parameter of the penalty function,
batch size in Stochastic Gradient Descent (SGD).
The right-hand side of \eqref{approx} can serve as the criteria to choose the
iteration number or tuning parameters that achieves the smallest out-of-sample error.

\subsection{Related literature}
Estimation of prediction risk of regression estimates has received
significant attention in the last few decades.
One natural avenue to estimate
the generalization performance is to use $V$-fold cross-validation or leave-one-out schemes.
In the proportional regime of interest here, where dimension $p$ and
sample size $n$ are of the same order,
$V$-fold cross-validation with finite $V$, e.g., $V=5,10$ is known to fail
at consistently estimate the risk of the estimator trained on the full dataset
\cite[Figure 1]{rad2018scalable}; this is simply explained because 
training with the biased sample size $n(V-1)/V$ may behave differently
than training with the full dataset.
Leave-one-out schemes, or drastically increasing $V$,
requires numerous refitting and is thus computationally expensive. 

This motivates computationally efficient estimates of the risk
of an estimator trained on the full dataset without sample-splitting,
including Approximate Leave-One-out (ALO) schemes
\cite{rad2018scalable} that do not rely on sample-splitting and refitting;
see \cite{auddy2024approximate} and the references therein for recent
developments.
For ridge regression and other estimators constructed from the square loss,
the Generalized Cross-Validation (GCV)
\cite{wahba1985comparison} has been shown be to be effective, and it avoids data-splitting and refitting; it only needs to fit the full data once and then adjust the training error by a multiplicative factor larger than 1. Beyond ridge regression, the extension of GCV using degrees-of-freedom has been studied for Lasso regression \cite{bayati2013estimating,bellec2020out,miolane2018distribution,celentano2020lasso}, and alternatives were developed for robust M-estimators
\cite{bellec2020out,bellec2021derivatives}.
While ALO or GCV and its extensions are good estimators of the predictive risk of a solution $\hbb$ to the optimization problem \eqref{eq:hat-b}, they are not readily applicable to quantify the prediction risk of iterates $\hbb^t$ obtained by widely-used iterative algorithms such as gradient descent (GD),
stochastic gradient descent (SGD) or their proximal variants:  ALO and GCV focus on estimating 
the final $(t\to+\infty)$
iterate of the algorithm, when a solution $\hbb$ in \eqref{eq:hat-b}
is found. Our goal in the present paper is to develop risk estimation
methodologies along the trajectory of the algorithm.

\citet{luo2023iterative} developed methods to estimate the cross-validation error of iterates that solves an empirical risk minimization problem. Their approach requires the Hessian of the objective function to be well-conditioned (i.e., the smallest and largest eigenvalues are bounded) along all iterates. This condition is not satisfied for the regression problems we consider in this paper, such as high-dimensional robust regression with a Lasso penalty.
In the context of least squares problems with both \( p \) and \( n \) being large,
\cite{celentano2020estimation} studied the fundamental limits on the
performance of first order methods, showing that these
are dominated by a specific Approximate Message Passing
algorithm.
\citet{paquette2021sgd} demonstrated that the dynamics of Stochastic Gradient Descent (SGD) become deterministic in the large sample and dimensional limit, providing explicit expressions for these dynamics when the design matrix is isotropic. Our work differs from \cite{paquette2021sgd} in two key ways: First, we address a more general regression problem incorporating a non-smooth regularizer, thereby considering both SGD and proximal SGD; second, we offer explicit risk estimates for each iteration, rather than focusing solely on the theoretical dynamics of the iterates.
\citet{celentano2021high} and
\cite{gerbelot2022rigorous} characterize
the dynamical mean-field dynamics of iterative schemes,
and identify that the limiting process involves a ``memory'' kernel,
describing how the dynamics of early iterates affect later ones.

Most recently, and most closely related to the present paper,
\cite{bellec2024uncertainty} proposed risk estimate for iterates $\hbb^t$ obtained by running gradient descent and proximal gradient descent methods for solving penalized least squares optimizations. 
However, \cite{bellec2024uncertainty} focuses exclusively on the square
loss for $\rho$ in \eqref{approx}, which is not readily applicable to robust regression with heavy tailed noise for which the Huber or other robust losses
must be used. \citet{bellec2024uncertainty} is further restricted to
gradient updates using the full dataset, which does not cover 
stochastic gradient descent. 
While several proof techniques used 
in the present paper are inspired by \cite{bellec2024uncertainty},
we will explain in \Cref{remark:comparison} that directly generalizing
\cite{bellec2024uncertainty} to SGD in robust regression leads to
a poor risk estimate for small batch sizes. The proposal of the present
paper leverages out of batch samples to overcome this issue.

For gradient descent for the square loss and without penalty,
\citet{patil2024failures} demonstrates both the failure 
of GCV along the trajectory and the success of computationally expensive
leave-one out schemes, and develops a proposal to reduce the computational
cost.  Finally, let us mention the works
\cite{chandrasekher2021sharp,lou2024hyperparameter} that characterize
the dynamics of the iterates in
phase retrieval and matrix sensing problems, assuming that a fresh
batch of observations (independent of all previous updates) is used
at each iteration. This is different from the  usual SGD setting studied in
the present
paper where the observations used during a stochastic gradient update
may be reused in future stochastic gradient updates, creating
intricate probabilistic dependence between gradient updates at different
iterations.

Robust regression is highly valuable in real data analysis due to its ability to handle heavy-tailed noise effectively, and we will see below that
the use of stochastic gradient updates and data-fitting loss functions
different from the square loss require estimates that have a drastically
different structure that in the square loss case.
The present paper develops generalization error estimates
in situations where no consistent estimate have been proposed:
(1) we develop generalization error estimates along the trajectory
of iterative algorithms aimed at solving \eqref{eq:hat-b}
for robust loss functions including the pseudo-Huber loss;
(2) the estimates are applicable not only to gradient updates involving the full dataset (gradient descent and its variants), but also to SGD and proximal SGD where a random batch is used for each update.

\section{Problem setup}
The paper studies iterative algorithms aimed at solving the optimization
problem \eqref{eq:hat-b}.
We consider the algorithm that generates iterates $\hbb^t$ for $t=1,2,...,T$ according to the following iteration:
\begin{equation}\label{eq:unified}
	\hbb^{t+1} 
        = \bphi_{t} \Bigl( \hbb^t + \frac{\eta_t}{|I_t|} \bX^\top \bS_t\bpsi(\by-\bX\hbb^t)
	\Bigr),
\end{equation}
where $\bS_t\in \R^{n\times n}$ is the diagonal matrix
$\bS_t = \sum_{i\in I_t} \be_i \be_i^\top$ for $I_t\subset[n]$
the $t$-th batch (independent of $(\bX,\by)$),
where
$\bphi_t: \R^{p} \mapsto \R^p$ and $\bpsi: \R^{n} \mapsto \R^n$ are two functions and $\eta_t$ is the step size. 
Typically, $\bpsi:\R^n\to\R^n$ is the componentwise application of
$\rho'$ (where $\rho$ is the data-fitting loss in \eqref{eq:hat-b}),
and the matrix
$\bS_t\in\R^{n\times n}$ is diagonal with elements in $\{0,1\}$
encoding the observations $i\in[n]$ used in the $t$-th stochastic gradient
update.
The presence of $\bS_t$ and possibly nonlinear function $\bpsi$ is such that
the above iteration scheme is not covered by previous related works
including \cite{bellec2024uncertainty,patil2024failures}, which only
tackle $\bS_t=\bI_n$ (full batch gradient updates) and $\bpsi:\R^n\to\R^n$
the identity map
($\rho$ in \eqref{eq:hat-b} restricted to be the square loss).
The iterative scheme \eqref{eq:unified}, on the other hand, covers SGD
with robust loss functions.

In the next section, we first provide a few examples 
of algorithms encompassed in the general iteration \eqref{eq:unified}.
This includes Gradient Descent (GD), Stochastic Gradient Descent (SGD), and their corresponding proximal methods \cite{parikh2014proximal}, Proximal GD and Proximal SGD. 
GD and SGD are widely used in practice, while the proximal methods are particularly useful for solving the optimization problem \eqref{eq:hat-b} with non-smooth regularizers.
\subsection{Robust regression without penalty}
If there are no penalties in \eqref{eq:hat-b}, \ie, $g(\bb)=0$, then 
the minimization problem becomes 
\begin{align*}
	\hbb \in \argmin_{\bb\in \R^p}
	\frac1n \sum_{i=1}^n \rho(y_i - \bx_i^\top \bb). 
\end{align*}
To solve this problem, provided $\rho$ is differentiable, one may use
gradient descent (SGD) and stochastic gradient descent (SGD).

\begin{example}[GD]
	The GD method consists of the following iteration:
	\begin{equation}\label{eq:gd}
		\hbb^{t+1} = \hbb^t + \tfrac{\eta_t}{n} \bX^\top \bpsi(\by - \bX \hbb^t),
	\end{equation}
	where $\bpsi$ is the derivative of $\rho$ acting component-wise on its argument, and $\eta_t$ is the step size (also known as learning rate). For the least squares loss $\rho(x) = x^2/2$, we have $\bpsi(\bu) = \bu$.
\end{example}

\begin{example}[SGD]
	Suppose at $t$-th iteration, we use the batch $I_t\subset [n]$ to compute the gradient, 
\begin{align}\label{eq:sgd}
	\hbb^{t+1} 
	= \hbb^t + 
	\frac{\eta_t}{|I_t|} 
	\sum_{i\in I_t} \bx_i \psi(y_i - \bx_i^\top \hbb^t)
	=\hbb^t + \frac{\eta_t}{|I_t|} \bX^\top \bS_t \bpsi(\by - \bX \hbb^t),
\end{align}
where $\bS_t= \sum_{i\in I_t} \be_i\be_i^\top$ and $\be_i$ is the $i$-th canonical vector in $\R^n$. 
If $I_t = [n]$ for each $t$, then 
$|I_t| = n$ and 
$\bS_t =\bI_n$, hence this SGD method reduces to the GD method in \eqref{eq:gd}.
\end{example}

\subsection{Robust regression with Lasso penalty}
Regularized regression is useful for high-dimensional regression problems where $p$ is larger than $n$.
We consider the Lasso penalty $g(\bb) = \lambda \|\bb\|_1$ to fight for the curse of dimensionality and obtain sparse estimates (our working assumptions, on the other hand, do not assume that the ground truth $\bb^*$ is sparse).
While GD and SGD are not directly applicable to solve the optimization problem \eqref{eq:hat-b} with Lasso penalty due to $\|\cdot\|_1$ lacking differentiability at 0, 
Proximal Gradient Descent (Proximal GD) \cite{parikh2014proximal} and 
Stochastic Proximal Gradient Descent (Proximal SGD) can be used to solve this optimization with Lasso penalty. 
\begin{example}[Proximal GD]
	For $g(\bb) = \lambda\norm{\bb}_1$ in \eqref{eq:hat-b}, the Proximal GD gives the following iterations: 
	\begin{align*}
		\hbb^{t+1} 
		&= \soft_{\lambda\eta_t}\bigl(\hbb^t + \tfrac{\eta_t}{n} \bX^\top \bpsi(\by - \bX \hbb^t)\bigr),
	\end{align*}
	where $\soft_{\theta}(\cdot)$  applies the soft-thresholding $u\mapsto \sign(u)(|u|-\theta)_+$ component-wise. 
\end{example}

\begin{example}[Proximal SGD]
Similar to the Proximal GD, the Proximal SGD consists of the following iterations:
\begin{align*}
	\hbb^{t+1} 
	&= \soft_{\lambda \eta_t}\bigl(\hbb^t + \tfrac{\eta_t}{|I_t|} \bX^\top \bS_t \bpsi(\by - \bX \hbb^t)\bigr).
\end{align*}
\end{example}

Let $\brho':\R^n\to \R^n$ be the function applies the derivative of $\rho:\R\to\R$ to each of its component, \ie, $\brho'(\bu) = (\rho'(u_1), ..., \rho'(u_n))^\top$.
Then the above examples can be summarized in the following table with different definition of $\bpsi$, $\bphi_t$, and $\bS_t$.
\begin{table}[H]
	\caption{Specification of $\bpsi, \bphi_t, \bS_t$ for each algorithm}
	\label{tab:specification}
	\centering
	\begin{tabular}{|l|l|l|l|l|}
	  \toprule
	       & GD & SGD & Proximal GD & Proximal SGD \\
	  \midrule
	  $\bpsi(\bu)$ & $\brho'(\bu)$ & $ \brho'(\bu)$ & $\brho'(\bu)$ & $\brho'(\bu)$ \\
	  $\bphi_t(\bv)$ & $\bv$ & $\bv$ & $\soft_{\lambda\eta_t}(\bv)$ & $\soft_{\lambda\eta_t}(\bv)$ \\
	  $\bS_t$ & $\bI_n$ & $\bS_t$ & $\bI_n$ & $\bS_t$\\
	  \bottomrule
	\end{tabular}
\end{table}

To define the proposed estimators of the generalization error,
we further define the following Jacobian matrices:
\begin{align*}
	\bD_t = \pdv{\bpsi(\br)}{\br}\big|_{\br = \by - \bX\hbb^t} \in \R^{n\times n}, \quad
	\tbD_t = \pdv{\bphi_t(\bv)}{\bv}\big|_{\bv = \hbb^t + \tfrac{\eta_t}{|I_t|} \bX^\top \bS_t \bpsi(\by - \bX\hbb^t)} \in \R^{p\times p}. 
\end{align*}
Then, we have $\tbD_t = \bI_p$ for GD and SGD, and
$\tbD_t = \sum_{j\in \hat S_t} \be_j\be_j^\top$ for Proximal GD and Proximal SGD based on soft-thresholding, where $\hat S_t = \{j\in[p]: \be_j^\top \hbb^{t+1}\ne 0\}$.

\section{Main results}

\begin{assumption}\label{assu:X}
    The design matrix $\bX$ has \iid rows from $\mathsf{N}_p(\bf0, \bSigma)$ for some positive definite matrix $\bSigma$ satisfying 
    $0< \lambda_{\min}(\bSigma)\le 1 \le \lambda_{\max}(\bSigma)$ and 
    $\opnorm{\bSigma} \|\bSigma^{-1}\|_{\rm op} \le \kappa$. 
    We assume
    $\operatorname{Var}[\bx_i^\top\bb^*]\le \delta^2$, that is, the signal of the model \eqref{eq:linear-model} is bounded from above.
\end{assumption}

\begin{assumption}\label{assu:noise}
	The noise $\bep$ is independent of $\bX$ and has \iid entries 
        from a fixed distribution
        independent of $n,p$,
        with $\E[|\varepsilon_i|]\le \delta$, that is, bounded first moment.
	\end{assumption}

\begin{assumption}\label{assu:rho-1}
	The data fitting loss $\rho:\R\to \R$ is convex, continuously differentiable and its derivative $\psi$ is 1-Lipschitz and $|\psi(x)|\le \delta$ for all $x\in \R$.
	The function $\bphi_t$ is 1-Lipschitz and satisfies $\bphi_t(\boldzero) = \boldzero$.
	The matrices $\bS_t = \sum_{i\in I_t}\be_i \be_i^\top$, and $|I_t|\ge c_0 n$ for some positive constant $c_0\in (0, 1]$. 
	Let $\eta_{\max} = \max_{t\in[T]} \eta_t$. 
\end{assumption}
Huber loss and Psuedo-Huber loss all satisfy \Cref{assu:rho-1}.

\begin{assumption}\label{assu:rho-2}
	The data fitting loss $\rho$ is twice continuously differentiable with positive second derivative. 
\end{assumption}

\begin{assumption}\label{assu:regime}
The sample size $n$ and feature dimension $p$ satisfy $p/n\le \gamma$ for a constant $\gamma \in (0, \infty)$.
\end{assumption}

\subsection{Intuition regarding the estimates of the generalization error}
\label{sec:intuition}

This subsection provides the intuition behind the definition of the
estimates define below.
For the sake of clarify, and in this subsection only,
assume that 
\begin{equation}
    \label{simplifications}
\bSigma=\bI_p, \qquad  \bep=\boldzero, \qquad \eta_t/|I_t| = 1/n.
\end{equation}
With the above working assumptions,
the validity of the estimates defined below relies on the 
probabilistic approximation
$$
\|\hbb^t - \bb^*\|^2
\approx
\frac1n\sum_{i=1}^n
\Bigl(-\bx_i^\top (\hbb^t - \bb^*) + \sum_{j=1}^p \be_j^\top \frac{\partial \hbb^t}{\partial x_{ij}}\Bigr)^2,
$$
which was developed in \cite{bellec2020out} for risk estimation
purposes, but outside the context of iterative algorithms.
Above, $\be_j\in\R^p$ is the $j$-th canonical basis vector.
In the present noiseless case with $\bep=\boldzero$, the first term inside the
squared norm in the right-hand side is equal to the residual $y_i-\bx_i^\top \hbb^t$,
so that the above display resembles \eqref{approx}.
Taking this probabilistic approximation for granted, to study the second
term in the right-hand side, we must understand the derivatives of $\hbb^t$
with respect to the entries $(x_{ij})_{i\in [n],j\in [p]}$ of $\bX$.
In \eqref{eq:unified}, each iterate is a relatively simple function of the previous ones, with the simplifications \eqref{simplifications} this is
$\hbb^{t+1}=\bphi_t(\hbb^t + \bX^\top\bS_t\bpsi(\by-\bX\hbb^t)/n)$.
For $t=1$, given that $\hbb^1$ is a constant initialization,
$\frac{\partial}{\partial x_{ij}} \hbb^2 = 
\tbD_1\be_j \be_i^\top \bS_1\bpsi(\by-\bX\hbb^1)/n
-\tbD_1 \bX^\top \bS_1 \bD_1 \be_i (\hbb^1-\bb^*)_j/n$.
We find in the proof, that when summing these quantities
over $j\in[p]$, the second term involving
$(\hbb^1-\bb^*)_j$ is negligible, and the same negligibility holds at later
iterations with terms involving 
$(\hbb^t-\bb^*)_j$ (or any $(\hbb^s-\bb^*)_j$, $s\le t$).
By performing a similar simple calculation at the next iteration,
and ignoring these terms, we find with
$f_i^1 = \be_i^\top \bS_1 \bpsi(\by-\bX\hbb^1)$
and
$f_i^2 = \be_i^\top \bS_2 \bpsi(\by-\bX\hbb^2)$
by the chain rule
$$
\sum_{j=1}^p \be_j^\top \frac{\partial \hbb^2}{\partial x_{ij}}
\approx
\underbrace{\trace\Bigl[\frac{\tbD_1}{n}\Bigr]}_{w_{2,1}} f_i^1,
\quad
\sum_{j=1}^p \be_j^\top\frac{\partial \hbb^3}{\partial x_{ij}}
\approx
\underbrace{\trace\Bigl[\frac{\tbD_2}{n}\Bigr]}_{w_{3,2}} f_i^2
+
\underbrace{
\trace\Bigl[\tbD_2(\bX^\top\bS_2 \bD_2\bX/n - \bI_p)\tbD_1/n\Bigr]
}_{w_{3,1}}  f_i^1.
$$
This reveals the weights $(w_{s,t})_{s<t}$ in \eqref{approx} 
at iteration $t=2$ and $t=3$.
We could continue this further by successive applications of the chain
rule, although for later iterations
this unrolling of the derivatives, capturing the interplay
between the Jacobians $\bD_t,\tbD_t$ and the stochastic gradient
matrix $\bS_t$, becomes increasingly complex.
This recursive unrolling of the derivatives can be performed numerically
at the same time as the computation of the iterates.
On the other hand, for the mathematical proof, for the formal definition of the
weights in \eqref{approx} and for the proposed estimates of the generalization error,
the matrix notation defined in the next subsection exactly
captures this unrolling of the derivatives.

\subsection{Formal matrix notation to capture recursive derivatives}

We now set up the matrix notation that captures this recursive
unrolling of the derivatives by the chain rule.
Throughout, $T$ is the final number of iterations.
Define three block diagonal matrices 
$\calD\in\R^{nT \times nT}$,
$\tcalD\in\R^{pT \times pT}$, and $\calS\in \R^{nT \times nT}$ by
$\calD = \sum_{t=1}^T \bigl((\be_t \be_t^\top) \otimes \bD_t\bigr)$, 
$\tcalD = \sum_{t=1}^{T} \bigl((\be_t \be_t^\top) \otimes \tbD_t\bigr)$, and 
$\calS = \sum_{t=1}^T \bigl((\be_t \be_t^\top) \otimes \bS_t\bigr)$. 
Now we are ready to introduce the following matrices of size $T\times T$:
\begin{align}
	\label{eq:W}
	\bW 
	&= \sum_{j=1}^p (\bI_T \otimes \be_j^\top) 
	(\bI_T \otimes \bSigma^{1/2})
	\bGamma
	(\bI_T \otimes \bSigma^{1/2})
	(\bI_T \otimes \be_j),\\
	\label{eq:A}
	\hbA 
	&= 
	\sum_{i=1}^n (\bI_T \otimes \be_i^\top) \calD (\bI_T \otimes \bX) 
	\bGamma
	(\bI_T \otimes \bX^\top) (\bI_T \otimes \be_i), \\
	\label{eq:K}
	\hbK
	&= 
	\sum_{t=1}^T \trace(\bD_t) \be_t\be_t^\top 
	- 
	\sum_{i=1}^n (\bI_T \otimes \be_i^\top) \calD (\bI_T \otimes \bX) 
	\bGamma
	(\bI_T \otimes \bX^\top) \calS \calD (\bI_T \otimes \be_i),
\end{align}
where 
$\bGamma = \calM^{-1}\bL (\bLambda\otimes \bI_p) \tcalD \in \R^{pT \times pT}$, 
	$\bL = \sum_{t=2}^T \bigl((\be_t\be_{t-1}^\top) \otimes \bI_p\bigr)$, 
	$\bLambda = 
	\sum_{t=1}^T \tfrac{\eta_t}{|I_t|}
	\be_t\be_t^\top$,
	\begin{align*}
		\calM = 
		\begin{bmatrix}
			\bI_p&&&&\\
			-\bP_1&\bI_p&&&\\
			&\ddots&\ddots&&\\
			&&-\bP_{T-1}&\bI_p&\\
		\end{bmatrix}
		\qquad\text{ and }\qquad
		\bP_t = \tbD_t\Bigl(\bI_p - \tfrac{\eta_t}{|I_t|} \bX^\top \bS_t \bD_t \bX\Bigr). 
	\end{align*}
Although notationally involved, the purpose of these matrices is
just to formalize the recursive computation of the derivatives by the chain
rule mentioned in \Cref{sec:intuition}. 

\subsection{Main results: estimating the generalization error consistently}\label{sec:main}

For each iterate $\hbb^t$, define the target $r_t$ (generalization error)
and its estimate $\hat r_t$ by
\begin{align}
        \label{r_t_hat_r_t}
        r_t \defas \norm{\bSigma^{1/2}(\hbb^t - \bb^*)}^2 + \frac{\norm{\bep}^2}{n},
        \qquad
	\hat r_t 
	=&\frac1n \Big\|(\by-\bX\hbb^t) + \sum_{s=1}^{t-1} w_{t,s} \bS_s \bpsi(\by-\bX\hbb^s)\Big\|^2,
\end{align}
where $w_{t,s}:= \be_t^\top \bW \be_s$ and  $\bW\in\R^{T\times T}$ is
the matrix defined in \eqref{eq:W}. The following shows that
$|\hat r_t - r_t|\to^P 0$ (convergence to 0 in probability) under suitable assumptions.

\begin{theorem}[Proved in \Cref{proof-thm:using-Sigma}]\label{thm:using-Sigma}
	Let \Cref{assu:X,assu:regime,assu:rho-1} be fulfilled.
        Then $\forall \epsilon > 0$,
	\begin{equation}
\P\Bigl(|\hat r_t - r_t|>\epsilon \Bigr)
\le
\max
\Bigl\{1, 
\frac{C(T, \gamma, \eta_{\max}, c_0,\delta,\kappa)}{\epsilon}
\Bigr\}
\Bigl(\frac{1}{\sqrt n}
    +
\E\Bigl[\min\Bigl\{1,
\frac{\|\bep\|}{n}
\Bigr\}\Bigr] 
\Bigr).
	\end{equation} 
If additionally \Cref{assu:noise} holds then
$\E[\min\{1,
\frac{\|\bep\|}{n}
\}]\to 0$, 
so that, as $n,p\to+\infty$ while
$(T,\gamma,\eta_{\max},c_0,\delta,\kappa,\epsilon)$ are held fixed, the
right-hand side converges to 0 and $\hat r_t - r_t$ converges to 0 in
probability.
\end{theorem}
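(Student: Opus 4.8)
The plan is to prove the approximation \eqref{approx} by combining a Gaussian-design risk identity with an exact recursive differentiation of the iterates, extending the heuristic of \Cref{sec:intuition} to general $\bSigma$ and noise $\bep$.

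The first step is to establish a master approximation of the form
\[
r_t = \|\bSigma^{1/2}(\hbb^t - \bb^*)\|^2 + \frac{\|\bep\|^2}{n}
\approx
\frac1n\Bigl\|(\by - \bX\hbb^t) + \bg^t\Bigr\|^2,
\]
where $\bg^t\in\R^n$ is a divergence-correction vector whose $i$-th coordinate is assembled from the derivatives $\{\partial\hbb^t/\partial x_{ij}\}_{j\in[p]}$ contracted against $\bSigma^{1/2}$. I would derive this by a second-order Stein / Gaussian--Poincar\'e argument in the spirit of \cite{bellec2020out}: Gaussian integration by parts (Stein's identity for the smooth-in-$\bX$ map $\bX\mapsto\hbb^t$) matches the expectation of the right-hand side to $r_t$, while the Gaussian--Poincar\'e inequality controls its fluctuations at rate $n^{-1/2}$. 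The additive $\|\bep\|^2/n$ is included in the target precisely so that the noise contribution to $\frac1n\|\by-\bX\hbb^t\|^2$ is accounted for, and $\bg^t$ is exactly what absorbs the fluctuation of $\frac1n(\hbb^t-\bb^*)^\top\bX^\top\bX(\hbb^t-\bb^*)$ around $\|\bSigma^{1/2}(\hbb^t-\bb^*)\|^2$ together with the noise cross-term $\langle\bep,\bX(\hbb^t-\bb^*)\rangle/n$.

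The second step is to compute $\partial\hbb^t/\partial x_{ij}$ explicitly by differentiating \eqref{eq:unified}. Each differentiation yields a lower-triangular linear recursion in the iteration index, whose inverse is encoded by $\calM^{-1}$ with per-step linearizations $\bP_t=\tbD_t(\bI_p-\tfrac{\eta_t}{|I_t|}\bX^\top\bS_t\bD_t\bX)$. Summing over $j$ and contracting with $\bSigma^{1/2}$ reorganizes $\bg^t$ into the form $\sum_{s<t}w_{t,s}\,\bS_s\bpsi(\by-\bX\hbb^s)$, with weights $w_{t,s}=\be_t^\top\bW\be_s$ read off from \eqref{eq:W}; the auxiliary matrices $\hbA$ and $\hbK$ bookkeep the trace and Gram-type terms produced when the derivative hits the inner argument $\bX\hbb^s$, and I would show they concentrate around deterministic counterparts. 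As flagged in \Cref{sec:intuition}, the additional terms proportional to $(\hbb^s-\bb^*)_j$ arising from differentiation must be shown negligible once summed over $j$.

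The main obstacle is twofold. First, the Gaussian risk identity of step one is classically derived under finite-variance noise, whereas \Cref{assu:noise} allows infinite variance; controlling the error therefore requires truncating $\bep$, which is the origin of the $\E[\min\{1,\|\bep\|/n\}]$ term, together with a stability argument that the iterates for truncated and untruncated noise remain close. Second, the recursion of step two compounds over the $t$ iterations, so I must bound the operator norms of the Jacobian products $\bP_{t-1}\cdots\bP_s$, of $\calM^{-1}$, and of $\bGamma$ uniformly in $n$. The $1$-Lipschitz hypotheses on $\psi$ and $\bphi_t$ and the batch-size/step-size controls in \Cref{assu:rho-1} keep these bounded, but propagating the bounds through the stochastic batch matrices $\bS_t$ and the Wishart-type matrices $\bX^\top\bS_t\bD_t\bX$ without a global spectral assumption is the delicate part, and is what forces the dependence of the constant on $T$ in $C(T,\gamma,\eta_{\max},c_0,\delta,\kappa)$.
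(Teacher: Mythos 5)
Your proposal follows essentially the same route as the paper's proof: a Stein-type risk identity in the spirit of \cite{bellec2020out} (the paper's \Cref{lem:steinX,lem:Chi2type}, applied to produce the error matrices $\bTheta_5,\bTheta_6$ whose diagonal entries are exactly $n(r_t-\hat r_t)$ up to the noise cross-term), combined with the exact recursive differentiation of the iterates (\Cref{lem:dot-b}, yielding $\calM^{-1}$, $\bP_t$, $\bGamma$ and the weights $w_{t,s}$), negligibility of the remainder terms involving $(\hbb^s-\bb^*)_j$ (the $\bUpsilon$ matrices of \Cref{lem:Upsilon-bound}), and operator-norm bounds on $\calM^{-1}$ and $\bGamma$ that produce the $T$-dependent constant. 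The one place your plan deviates is the treatment of the heavy-tailed noise: the paper neither truncates $\bep$ nor needs a stability argument comparing truncated and untruncated iterates --- it conditions on $\bep$ throughout (all moment bounds in \Cref{lem:Theta-bound} are conditional on $\bep$, and $\fnorm{\bF}\le\delta\sqrt{nT}$ holds regardless of the noise because $\psi$ is bounded), so the $\E[\min\{1,\norm{\bep}/n\}]$ term arises simply from conditional Markov's inequality plus the trivial bound $\P(\cdot\mid\bep)\le 1$, with $\norm{\bep}/n\limp 0$ supplied by the cited large-deviation result.
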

This establishes that $\hat r_t$ is consistent at estimating $r_t$.
The statement $\E[\min\{1,
\|\bep\|/n
\}]\to 0$ is equivalent to $\|\bep\|^2/n^2\to^P 0$ (convergence in probability), and is proved in \cite{large-deviation} under the assumption
that $\E|\veps_i|<+\infty$ with $\veps_i$ \iid from a fixed distribution; 
this allows $\operatorname{Var}[\veps_i]=+\infty$ as long as the first moment
is finite.
The expression of $\bW$ involves $\bSigma$, which is typically unknown in practice. 
Our next result provides a consistent estimate of $\bW$ using quantities that do not require the knowledge of $\bSigma$. 

We propose to estimate $\bW$ by $\tbW \defas \hbK^{-1} \hbA$ where $\hbK$ and $\hbA$ are the $T\times T$ matrices defined in \eqref{eq:A}-\eqref{eq:K}.
We define another estimate $\tilde r_t$ by replacing $\bW$ in \eqref{r_t_hat_r_t} with $\tbW = \hbK^{-1} \hbA$:  
\begin{align*}
	\tilde r_t 
	=& \frac 1 n \Big\|{(\by-\bX\hbb^t) + \sum_{s=1}^{t-1} \tilde w_{t,s} \bS_s \bpsi(\by-\bX\hbb^s)}\Big\|^2 ,
\end{align*}
where $\tilde w_{t,s} = \be_t^\top \tbW \be_s$. 

\begin{theorem}[Proved in \Cref{proof-thm:unknwon-Sigma}]\label{thm:unknwon-Sigma}
	Under \Cref{assu:X,assu:rho-1,assu:rho-2,assu:regime,assu:rho-2},
        for any $\epsilon > 0$,
\begin{align*}
	\P\Bigl(
	|\tilde r_t - r_t| > \epsilon
	\Bigr)
	\le 2e^{-n/18} + 
        \max\Bigl\{1,
        \tfrac{C(T, \gamma, \eta_{\max}, c_0, \delta,\kappa)}{\epsilon}
        \Bigr\}
        \Bigl[\tfrac{1}{\sqrt{n}} + \E [\min(1, \tfrac{\norm{\bep}}{n} )] \Bigr].
\end{align*}
If additionally \Cref{assu:noise} holds then
$\E[\min\{1,
\frac{\|\bep\|}{n}
\}]\to 0$,
so that, as $n,p\to+\infty$ while
$(T,\gamma,\eta_{\max},c_0,\delta,\kappa,\epsilon)$ are held fixed, the
right-hand side converges to 0 and $\tilde r_t - r_t$ converges to 0 in
probability.
\end{theorem}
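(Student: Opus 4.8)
The plan is to deduce \Cref{thm:unknwon-Sigma} from \Cref{thm:using-Sigma} by showing that the data-driven weight matrix $\tbW = \hbK^{-1}\hbA$ is close to the $\bSigma$-dependent matrix $\bW$ of \eqref{eq:W}. First I would use the triangle inequality
$$|\tilde r_t - r_t| \le |\tilde r_t - \hat r_t| + |\hat r_t - r_t|,$$
and control the second term by \Cref{thm:using-Sigma}, so the whole task reduces to bounding $|\tilde r_t - \hat r_t|$. Both $\tilde r_t$ and $\hat r_t$ are quadratic forms $\tfrac1n\norm{\bu_t + \sum_{s<t} w_{t,s}\bv_s}^2$ in the residual vector $\bu_t = \by - \bX\hbb^t$ and the score vectors $\bv_s = \bS_s\bpsi(\by-\bX\hbb^s)$, differing only through the weights $w_{t,s}$ versus $\tilde w_{t,s}$. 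Expanding the difference, $|\tilde r_t - \hat r_t|$ is linear-plus-quadratic in the weight gaps $\be_t^\top(\tbW - \bW)\be_s$, hence controlled by $\opnorm{\tbW - \bW}$ times the factors $\tfrac1n\norm{\bu_t}^2$ and $\tfrac1n\norm{\bv_s}^2$. These factors are $O(1)$ with high probability: $|\psi|\le\delta$ gives $\norm{\bv_s}\le\delta\sqrt n$ directly, and $\norm{\bu_t}$ is controlled once the iterates are shown to stay bounded. Everything therefore hinges on $\opnorm{\tbW - \bW}\limp 0$.

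To bound $\opnorm{\tbW - \bW} = \opnorm{\hbK^{-1}\hbA - \bW}$, I would write
$$\hbK^{-1}\hbA - \bW = \hbK^{-1}\bigl(\hbA - \hbK\bW\bigr),$$
reducing the claim to two ingredients: (i) $\opnorm{\hbK^{-1}}$ is bounded on a high-probability event, and (ii) $\opnorm{\hbA - \hbK\bW}\limp 0$. For (i) I would work on the event where the extreme singular values of $\bX$ are controlled; a standard Gaussian singular-value concentration bound shows this event has probability at least $1 - 2e^{-n/18}$, which is exactly the source of the $2e^{-n/18}$ term in the statement. On this event the matrix $\bP_t = \tbD_t(\bI_p - \tfrac{\eta_t}{|I_t|}\bX^\top\bS_t\bD_t\bX)$, and hence $\calM^{-1}$, $\bGamma$, and $\hbK$, are well conditioned (using $|\psi'|\le 1$, the $1$-Lipschitz property of $\bphi_t$, and $|I_t|\ge c_0 n$), giving $\opnorm{\hbK^{-1}}\le C(T,\gamma,\eta_{\max},c_0,\delta,\kappa)$.

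For ingredient (ii), the point is that $\hbA$ and $\hbK\bW$ both approximate the \emph{same} population object, obtained by replacing the empirical Gram interactions $\bX^\top(\cdots)\bX/n$ appearing explicitly in $\hbA,\hbK$ by $\bSigma$, and the sample sums $\tfrac1n\sum_{i=1}^n$ by the corresponding traces against $\bSigma$ that define $\bW$. I would establish this through concentration of the relevant bilinear forms $\be_i^\top\bX\,\bGamma\,\bX^\top\be_i$ and trace functionals around their $\bSigma$-counterparts, using Gaussian integration by parts (Stein's identity) to account for the dependence of the iterates, and hence of $\calD,\tcalD,\bGamma$, on the rows of $\bX$. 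This is where \Cref{assu:rho-2} enters: twice differentiability of $\rho$ with positive second derivative ensures $\bD_t = \diag(\psi'(\cdots))$ is differentiable, so the derivatives $\partial\hbb^s/\partial x_{ij}$ and the resulting Stein corrections are well defined and expand recursively as in \Cref{sec:intuition}.

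The main obstacle is ingredient (ii), and specifically the nested dependence of $\bGamma$ on $\bX$ through the ``memory'' matrix $\calM^{-1}$: the matrices $\hbA$ and $\hbK$ contain $\bX$ both in explicit positions and implicitly through every $\bP_s$ with $s<t$, so $\bGamma$ cannot be treated as frozen when differentiating or concentrating. Carefully tracking how a perturbation of a single entry $x_{ij}$ propagates through all $t$ iterations—precisely the recursive unrolling formalized by $\calM$, $\bL$, $\bLambda$, $\tcalD$—and showing that the accumulated corrections concentrate to the $\bSigma$-version defining $\bW$ is the technical heart of the argument. This is the step where the techniques of \cite{bellec2024uncertainty} must be genuinely extended to accommodate the stochastic-batch matrices $\bS_t$ and the nonlinear score $\bpsi$, and where the bulk of the $\tfrac{1}{\sqrt n} + \E[\min(1,\norm{\bep}/n)]$ rate is produced.
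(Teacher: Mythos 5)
Your overall skeleton matches the paper's: triangle inequality through $\hat r_t$, the factorization $\tbW-\bW=\hbK^{-1}(\hbA-\hbK\bW)$, an operator-norm bound on $\hbK^{-1}$ on a high-probability event, and Stein-type concentration for $\hbA-\hbK\bW$ (the paper actually bounds the specific products $\bF^\top\bF(\hbA-\hbK\bW)^\top$ and $\bR^\top\bF(\hbA-\hbK\bW)^\top$ via the identity $n^{-1}\bF^\top\bF(\hbA-\hbK\bW)^\top=n^{-1}\bTheta_1\hbK^\top+\bTheta_2+\bTheta_4$, rather than $\opnorm{\tbW-\bW}$ itself, but that is a difference of bookkeeping, not of substance).

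The genuine gap is in your ingredient (i). The invertibility of $\hbK$ does \emph{not} follow from controlling the singular values of $\bX$, and the $2e^{-n/18}$ term is not a singular-value concentration probability. The matrix $\hbK$ is lower triangular with diagonal entries $\trace(\bD_t)=\sum_{i=1}^n\rho''(y_i-\bx_i^\top\hbb^t)$, and since $\rho''(u)$ can be arbitrarily small for large $|u|$ (e.g.\ pseudo-Huber), these diagonal entries could be $o(n)$ even when $\bX$ is perfectly conditioned --- for instance if most noise coordinates are huge, so that most residuals sit where $\rho''$ is near zero. The paper's \Cref{lem:hbK-opnorm} therefore has to exhibit a set $\hat I$ of at least $n/3$ indices on which \emph{both} $|\veps_i|\le M$ and $\|\bx_i^\top\bH\|$ is bounded, so that $\rho''(y_i-\bx_i^\top\hbb^t)\ge c_*>0$ on $\hat I$ and $\trace(\bD_t)\ge c_* n/3$. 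The bounded-noise-fraction event is obtained by Hoeffding's inequality (that is where $e^{-n/18}$ comes from), and the bounded-$\|\bx_i^\top\bH\|$ fraction by Markov's inequality applied to $\fnorm{\bX\bH}^2/n$ on the singular-value event (which contributes only $e^{-n/2}$). This is also where \Cref{assu:rho-2} (strict positivity of $\rho''$) is actually used --- not, as you suggest, to make the Stein corrections well defined in ingredient (ii); that part already goes through under the Lipschitz conditions of \Cref{assu:rho-1}, as \Cref{thm:using-Sigma} shows. Without this argument your step ``on this event $\hbK$ is well conditioned'' is unsupported, and it is precisely the step that distinguishes \Cref{thm:unknwon-Sigma} from \Cref{thm:using-Sigma}.
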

This establishes the consistency of $\tilde r_t$. The simulations presented next confirm that the two estimates $\tilde r_t$ and $\hat r_t$ both are accurate estimates of $r_t$. 
The estimate $\tilde r_t$ has the advantage of not relying on the knowledge of $\bSigma$ and are recommended in practice. 

\begin{remark}
        \label{remark:comparison}
	We highlight that directly generalizing the approach in \cite{bellec2024uncertainty} would lead to the approximation 
	$\tbA \approx \tbK \bW$, where 
	$\tbA$ and $\tbK$ are given in \eqref{eq:tbA} and \eqref{eq:tbK}, respectively.
	From $\tbA \approx \tbK \bW$, 
            obtaining an estimate of $\bW$ requires inverting $\tbK$.
            However, this inversion fails for SGD for small (but still very realistic) batch sizes of order $0.1 n$  in simulations (see \Cref{fig:add-suboptimal}).
	The matrix $\tbK$ is lower triangular, and the reason for the lack of invertibility of $\tbK$ can be seen in the diagonal terms equal to $\text{Tr}[\bS_t\bD_t]$ in \eqref{eq:tbK}, where $\bS_t \in \{0,1\}^{n \times n}$ is the diagonal matrix with 1 in position $(i,i)$ if and only if the $i$-th observation is used in the $t$-th batch. This diagonal element of $\tbK$ can easily be small (or even 0) for small batches, if the batch only contains observations such that $(\bD_t)_{ii}$ is 0 or small. 
        Let $\tilde r_t^{\text{sub}}$ denote the estimate of the same form as $\tilde r_t$ but using the weight matrix $\tbK^{-1} \tbA$ instead. Simulation results in \Cref{fig:add-suboptimal} confirm that $\tilde r_t^{\text{sub}}$ is suboptimal compared to our proposed $\tilde r_t$. 
        For SGD and proximal SGD, we solved this issue regarding the invertibility of $\tbK$ by using out-of-batch samples in the construction of $\hbK$ and $\hbA$, in order to avoid $\bS_t$ in the diagonal elements of $\hbK$ in equation \eqref{eq:hbK}. This is the key to making these estimators work for SGD and proximal SGD, and this use of out-of-batch samples is new compared to \cite{bellec2024uncertainty} (which only tackles the square loss with full-batch gradients).

\end{remark}

\begin{remark}
The constant $C(T,\gamma,\eta_{\max},c_0,\delta,\kappa)$ in the above
results is not explicit. Inspection of the proof reveals that
the dependence of this constant
in $T$ is currently $T^T$, allowing $T$ of order
$\log(n)/\log\log n$ before the bound becomes vacuous.
Improving this dependence in $T$ appears challenging and possibly
out of reach of current tools, even for the well-studied Approximate Message Passing (AMP) algorithms. The papers \cite{rush2018finite,rush2020asymptotic} feature for instance the same $\log(n)/\log\log n$ dependence for approximating the risk of AMP. The preprint \cite{li2024non} offers the latest advances on the dependence on $T$ in the bounds satisfied by AMP. It allows $T\asymp \text{poly}(n)$ while still controlling certain AMP related quantities, although for the risk \cite[equations (16)-(17)]{li2024non} the condition required on $T$ is still logarithmic in $n$. This suggests that advances on this front are possible, at least for isotropic design and specific loss and regularizer such as those studied in \cite{li2024non}: Lasso or Robust M-estimation with no regularizer.
Since these latest advances in \cite{li2024non} are obtained for specific estimates (Lasso or Robust M-estimation with no regularizer), it may be possible to follow a similar strategy and improve our bounds for specific examples of iterative algorithms closer to AMP, or algorithms featuring only separable losses and penalty. We leave such improvements for specific examples for future work,
as the goal of the current paper is to cover a general framework allowing iterations of the form \eqref{eq:unified}
with little restriction on the nonlinear functions
except being Lipschitz.
\end{remark}

\section{Simulation} \label{sec:simulation}
In this section, we present numerical experiments to assess the performance of the proposed risk estimates. 
All necessary code for reproducing these experiments is provided in the supplementary material and is publicly available in the GitHub repository
\url{https://github.com/kaitan365/SGD-generlization-errors}.
Our goal is to compare the performance of the proposed risk estimates with the true risk $r_t$ for different regression methods and iterative algorithms. 

We generate the dataset $(\bX, \by)$ from the linear model \eqref{eq:linear-model}, that is, $\by = \bX \bb^* + \bep$. Here, the rows of $\bX \in \R^{n \times p}$ are sampled from a centered multivariate normal distribution with covariance matrix $\bSigma = \bI_p$. 
The noise vector $\bep$ consists of i.i.d. entries drawn from a $t$ distribution with two degrees of freedom so that the noise variance is infinite. The true regression vector $\bb^* \in \R^p$ is chosen with $p/20$ nonzero entries, set to a constant value such that the signal strength $\norm{\bb^*}^2$ equals 10.

We explore two scenarios of the $(n, p)$ pairs and corresponding iterative algorithms:
\begin{itemize}
    \setlength{\itemindent}{0cm}   
    \item[(i)] $(n, p) = (10000, 5000)$: In this configuration, with $n$ much larger than $p$, we examine Huber regression and Pseudo-Huber regression (without penalty or soft-thresholding). Both the gradient descent (GD) and stochastic gradient descent (SGD) algorithms are implemented for each type of regression.
    \item[(ii)] $(n, p) = (10000, 12000)$: Here, we investigate Huber regression and Pseudo-Huber regression with an L1 penalty, $\lambda \|\bb\|_1$ ($\lambda = 0.002$) and corresponding soft-thresholding step. For each penalized regression, we employ the Proximal Gradient Descent (Proximal GD) and Stochastic Proximal Gradient Descent (Proximal SGD) algorithms.
\end{itemize}
In all algorithms, we start with the initial vector $\hbb^1 = \mathbf{0}_p$ and proceed with a fixed step size 
$\eta = (1 + \sqrt{p/n_*})^{-2}$
where $n_* = n$ for GD and proximal GD, and $n_* = n/5$ for SGD and proximal SGD. 
We run each algorithm for $T = 100$ steps. For SGD and Proximal SGD, batches $I_t \subset \{1, 2, \ldots, n\}$ are randomly sampled without replacement and independently of $(\bX,\by, (I_s)_{s\ne t})$, each with cardinality $|I_t| = \frac{n}{5}$.

A crucial component of the proposed risk estimates $\hat r_t$ and $\tilde r_t$
involve the weight matrices $\bW$ and $\tbW$. The matrix $\bW$ is defined in \Cref{thm:using-Sigma}, and $\tbW = \hbK^{-1} \hbA$ is defined in \Cref{thm:unknwon-Sigma}. 
We employ Hutchinson's trace approximation to compute 
$\bW$, $\hbA$, and $\hbK$. This implementation is computationally efficient. 
We refer readers to \cite[Section 4]{bellec2024uncertainty} for more details.

Recall that we have proposed two estimates for 
$r_t = \norm{\bSigma^{1/2}(\hbb^t - \bb^*)}^2 + \norm{\bep}^2/n$, one is $\hat r_t$ in \Cref{thm:using-Sigma} which requires knowing $\bSigma=\E[\bx_i\bx_i^\top]$, and the other is $\tilde r_t$ in \Cref{thm:unknwon-Sigma} which does not need $\bSigma$. 
Since the quantity $\norm{\bep}^2/n$ remains constant along the algorithm trajectory, we only focus on the estimation of $\norm{\bSigma^{1/2}(\hbb^t - \bb^*)}^2$. 
We repeat each numerical experiment 100 times and present the aggregated results in \Cref{fig:low-dim,fig:high-dim,fig:SGD_eta_dim_1_loss_1}.

In \Cref{fig:low-dim}, we focus on the scenario with $(n,p) = (10000, 5000)$, and plot the actual risk $\norm{\bSigma^{1/2}(\hbb^t - \bb^*)}^2$, 
and its two estimates 
$\hat r_t - \norm{\bep}^2/n$ and $\tilde r_t - \norm{\bep}^2/n$ along with the 2 standard error bar for GD and SGD algorithms applied to both Huber and Pseudo Huber regression.
In \Cref{fig:high-dim}, we focus on the scenario with $(n,p) = (10000, 12000)$, and present the risk curves for the 
Proximal GD and Proximal SGD algorithms applied to both 
L1-penalized Huber regression and Pseudo-Huber regression.

\Cref{fig:low-dim} and 
\Cref{fig:high-dim} confirm the three curves are in close agreement, indicating that the proposed estimates $\hat r_t - \norm{\bep}^2/n$ and $\tilde r_t - \norm{\bep}^2/n$ are consistent estimates of the actual risk $\norm{\bSigma^{1/2}(\hbb^t - \bb^*)}^2$. 
The two estimates closely capture the risk $\norm{\bSigma^{1/2}(\hbb^t - \bb^*)}^2$ over the entire trajectory of the algorithms. For GD and Proximal GD, the risk curves exhibit a U-shape, first decreasing and then increasing, and the estimates $\hat r_t$ and $\tilde r_t$ closely capture this pattern. This suggests that the proposed estimates are reliable and can be used to monitor the risk of the iterates and find the optimal iteration (the iteration minimizing the generalization error) along the trajectory of the algorithm.

\begin{figure}[!ht]
	\centering
	\begin{subfigure}{0.4\linewidth}
		\centering
		\includegraphics[width=\linewidth]{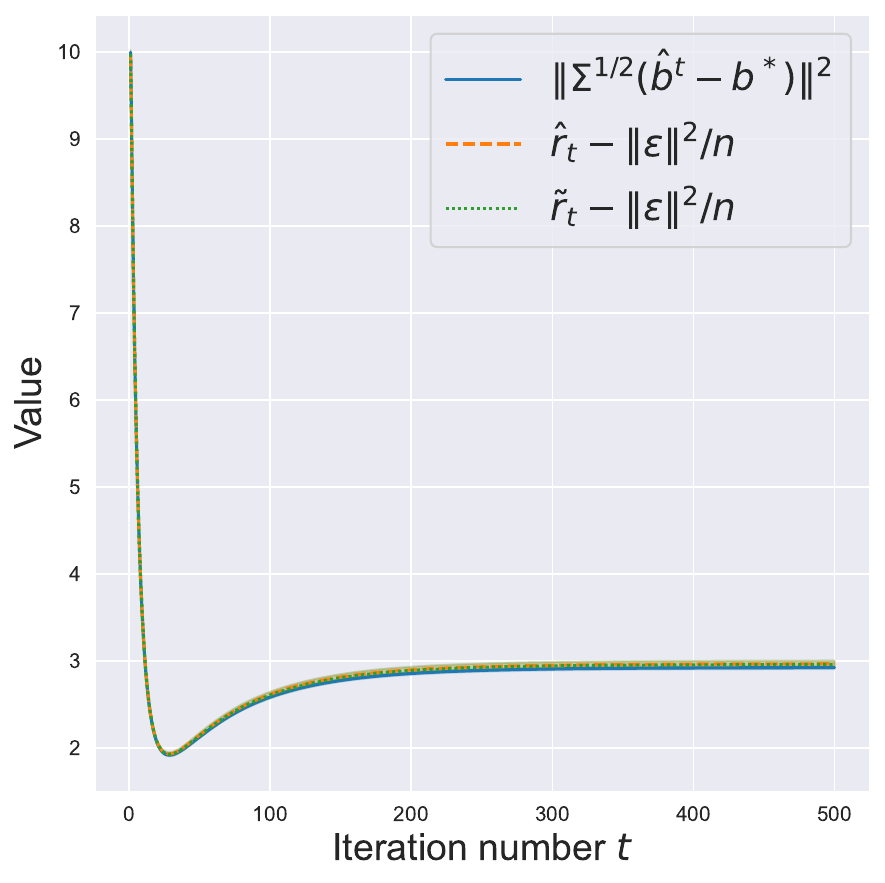}
		\caption{Risk curves of GD for Huber regression}
		\label{fig:Huber-GD}
	  \end{subfigure}~ 
	  \begin{subfigure}{0.4\linewidth}
		\centering
		\includegraphics[width=\linewidth]{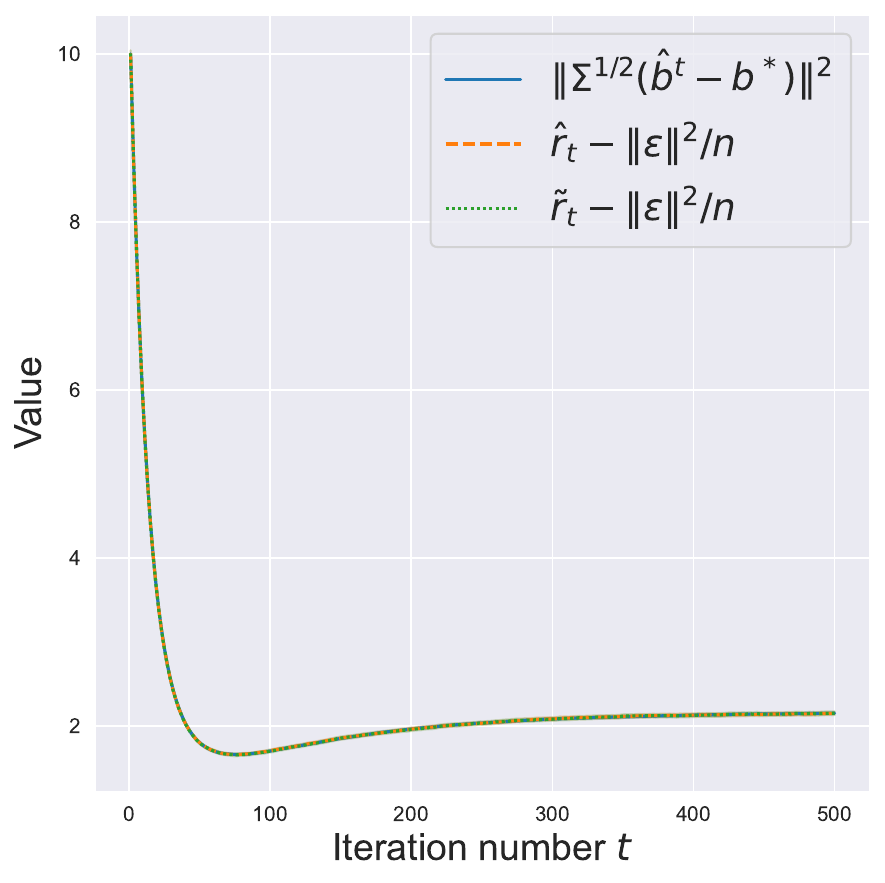}
		\caption{Risk curves of SGD for Huber regression}
		\label{fig:Huber-SGD}
	  \end{subfigure}
	\\
	\begin{subfigure}{0.4\linewidth}
		\centering
		\includegraphics[width=\linewidth]{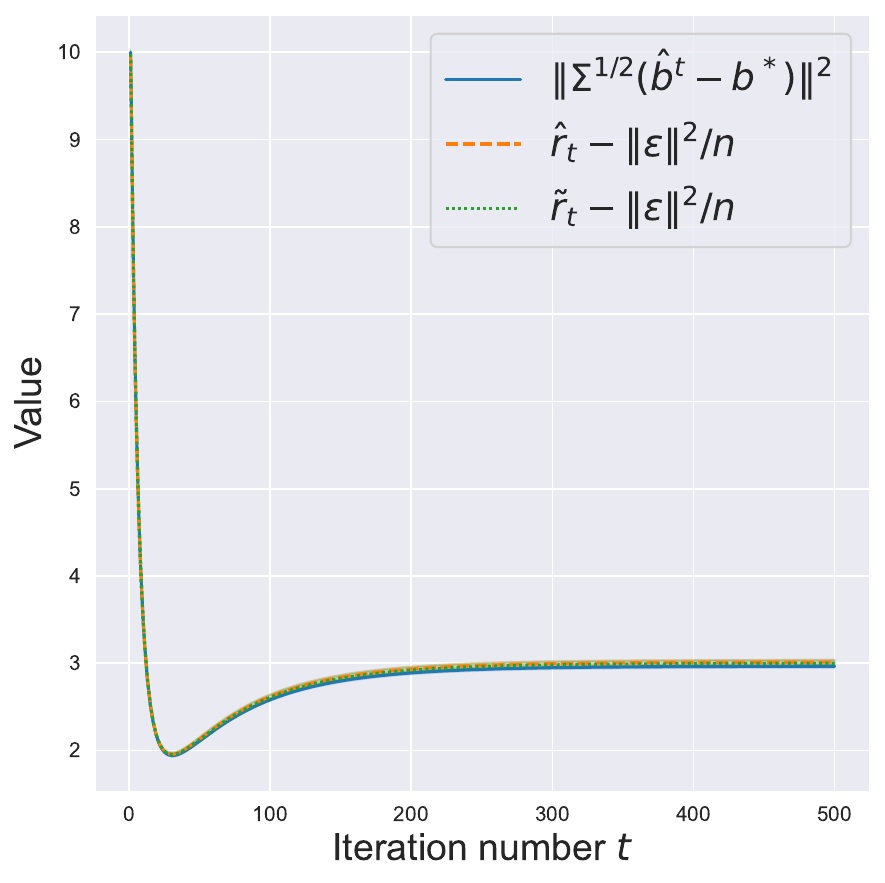}
		\caption{Risk curves of GD for Pseudo-Huber regression}
		\label{fig:pseudo-Huber-GD}
	  \end{subfigure}~ 
	  \begin{subfigure}{0.4\linewidth}
		\centering
		\includegraphics[width=\linewidth]{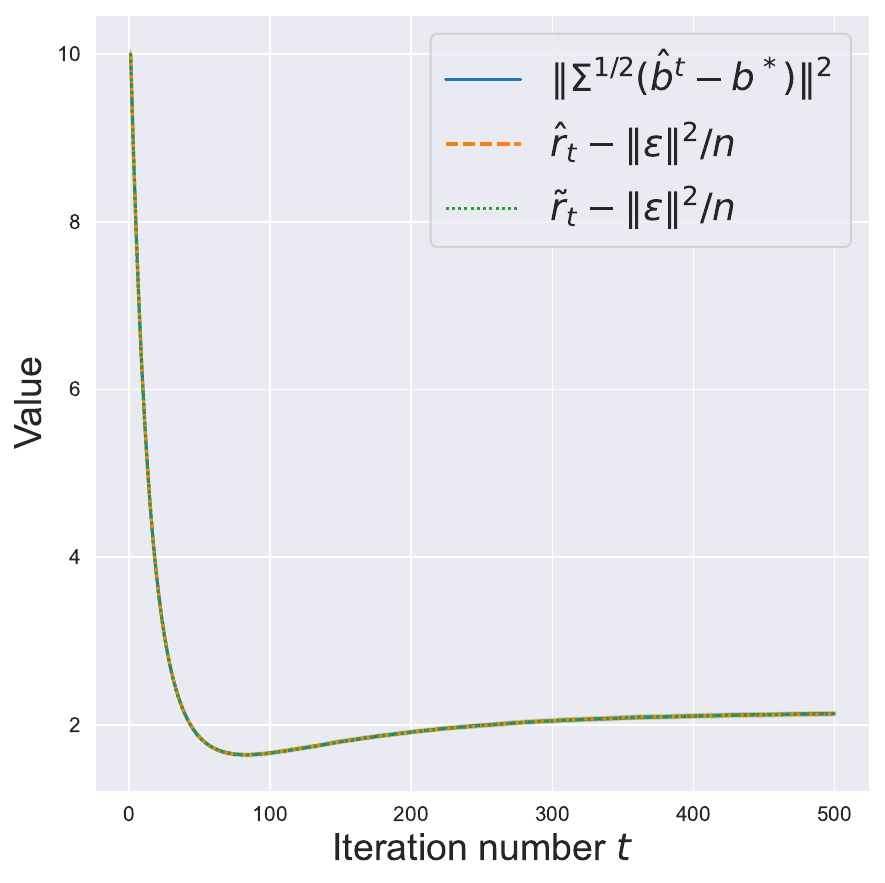}
		\caption{Risk curves of SGD for Pseudo-Huber regression}
		\label{fig:pseudo-Huber-SGD}
	  \end{subfigure}
	\caption{Risk curves for Huber and Pseudo-Huber regression with GD and SGD algorithms for the scenario $(n,p) = (10000,5000)$. \textbf{Upper row:} Huber regression, \textbf{Lower row:} Pseudo-Huber regression. \textbf{Left column:} GD, \textbf{Right column:} SGD.
	}\label{fig:low-dim}
  \end{figure}

\begin{figure}[!ht]
\centering
\begin{subfigure}{0.4\linewidth}
	\centering
	\includegraphics[width=\linewidth]{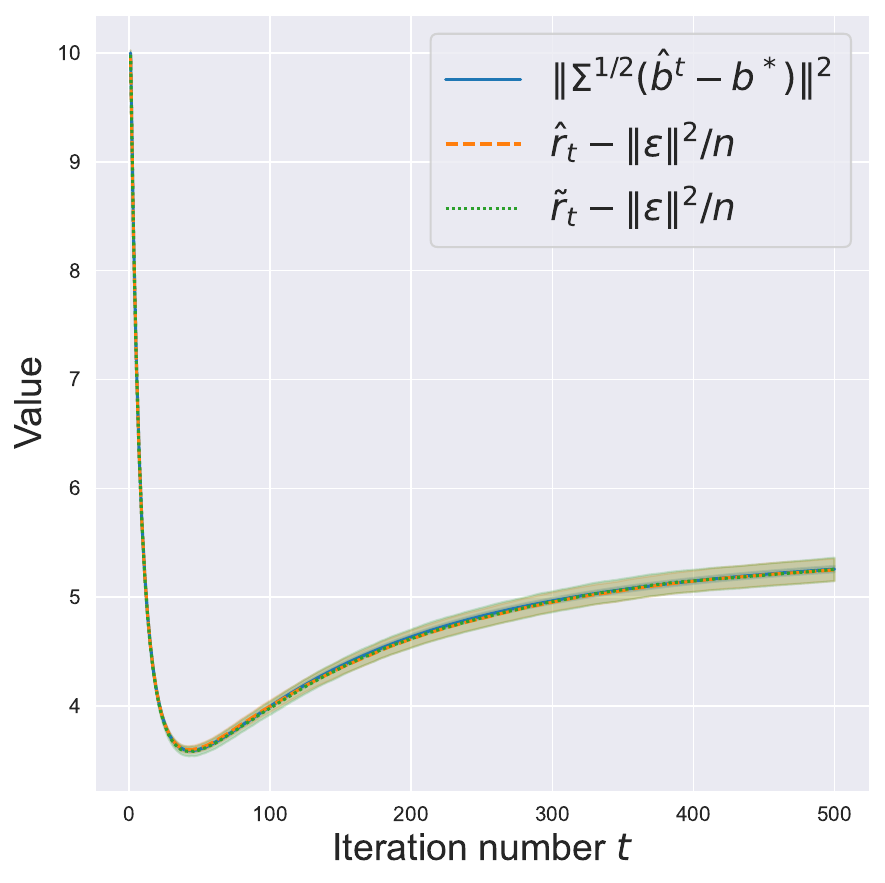}
	\caption{Risk curves of proximal GD for Huber regression}
	\label{fig:Huber-Proximal GD}
	\end{subfigure}~ 
	\begin{subfigure}{0.4\linewidth}
	\centering
	\includegraphics[width=\linewidth]{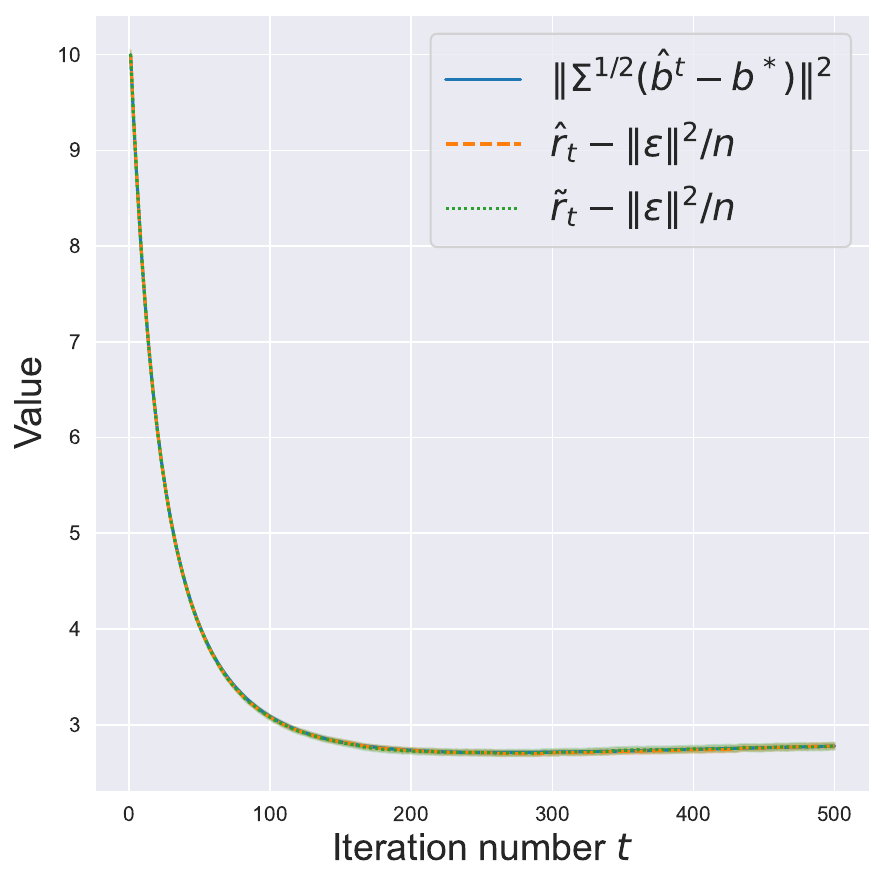}
	\caption{Risk curves of proximal SGD for Huber regression}
	\label{fig:Huber-Proximal SGD}
	\end{subfigure}
\\
\begin{subfigure}{0.4\linewidth}
	\centering
	\includegraphics[width=\linewidth]{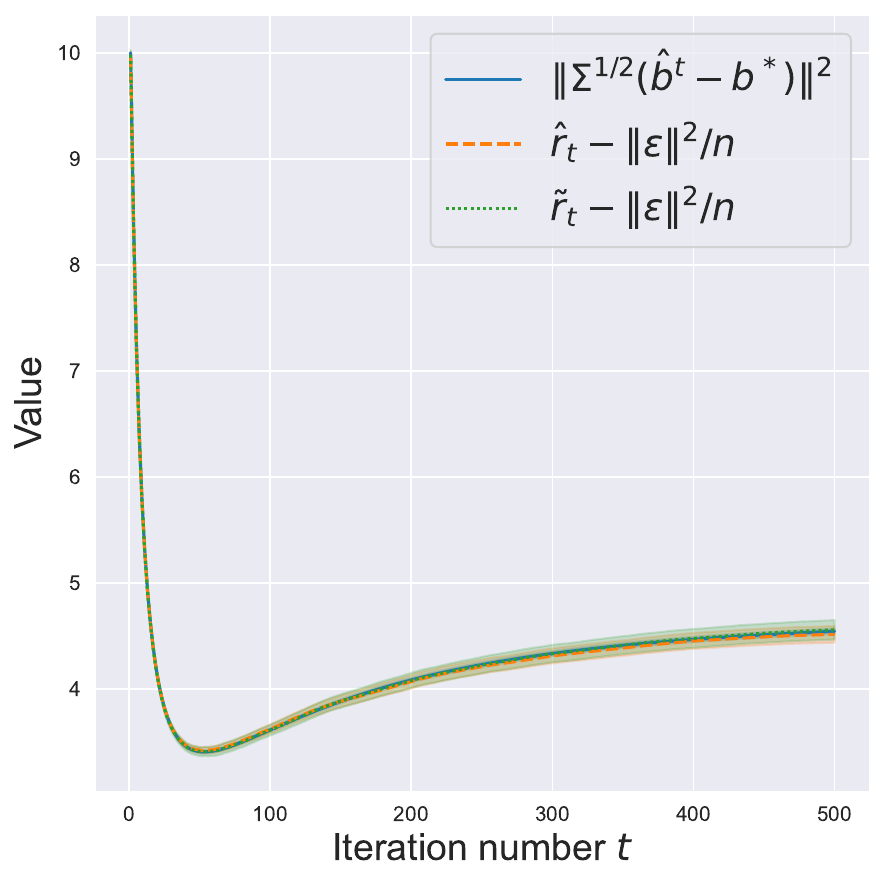}
	\caption{Proximal GD for Pseudo-Huber regression}
	\label{fig:pseudo-Huber-Proximal GD}
	\end{subfigure}~ 
	\begin{subfigure}{0.4\linewidth}
	\centering
	\includegraphics[width=\linewidth]{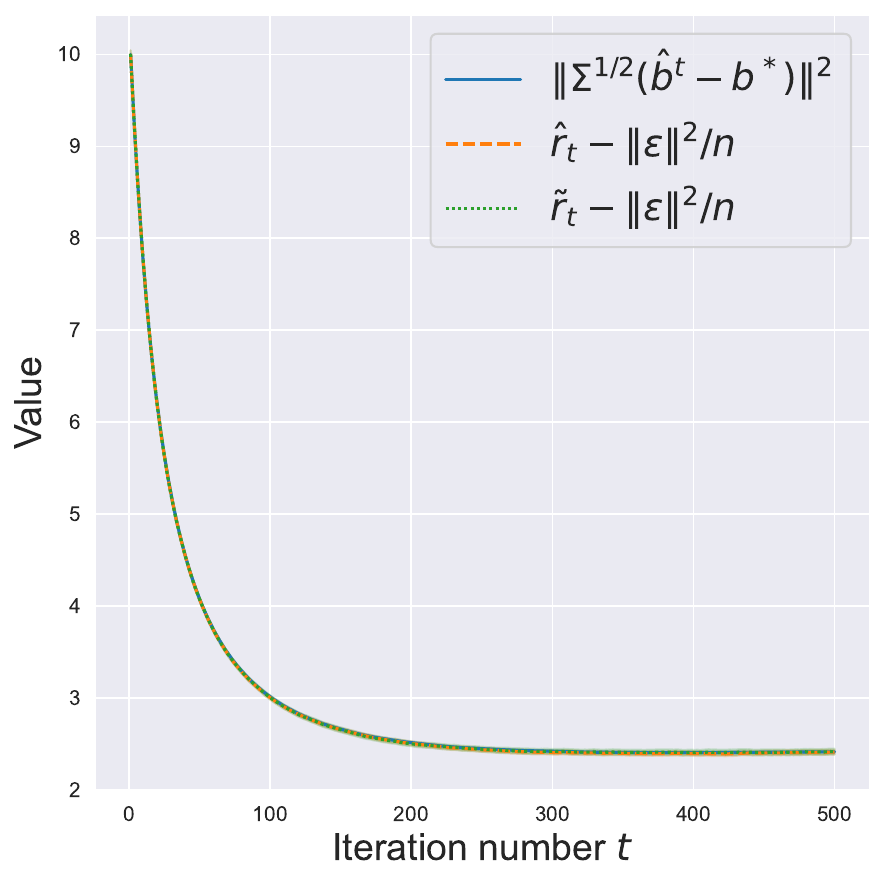}
	\caption{Proximal SGD for Pseudo-Huber regression}
	\label{fig:pseudo-Huber-Proximal SGD}
	\end{subfigure}
\caption{Risk curves for L1-penalized Huber and Pseudo-Huber regression with Proximal GD and Proximal SGD algorithms for the scenario $(n,p) = (10000,12000)$. \textbf{Upper row:} L1-penalized Huber regression, \textbf{Lower row:} L1-penalized  Pseudo-Huber regression. \textbf{Left column:} Proximal GD, \textbf{Right column:} Proximal SGD.
}\label{fig:high-dim}
\end{figure}

\paragraph{Additional experiments: varying step sizes for different iterations.}
We also conduct simulations to investigate the accuracy of the proposed risk estimates in a setting with varying step size. 
We consider two types of step sizes: 
1). $\eta_t = 1$ if $t$ is odd, and $\eta_t = 0$ if $t$ is even; 2). $\eta_t = 1$ if $t$ is odd, and $\eta_t = 0.5$ if $t$ is even.
While the above choices of step size are not preferred in practice, here the goal is show that the proposed risk estimates is able to accurately capture the dynamics of the risk even when the step size changes along the trajectory of the algorithm. For instance, the first choice of step size should produce a risk curve that is flat when $t$ is even.
The results are presented in \Cref{fig:SGD_eta_dim_1_loss_1}, illustrating
that the risk estimates accurately capture the flat segments
of the true risk curve.

\paragraph{Additional experiments: the estimate $\tilde r_t^{\rm sub}$ is suboptimal.}
We compare the performance of $\tilde r_t^{\rm sub}$ with our proposed estimates in Huber regression with $(n,p,T) = (4000, 1000, 20)$ and batch size $ |I_t| = n/10$ and $\eta_t=0.2$ for all $t\in[T]$. It is clear from \Cref{fig:add-suboptimal} that $\tilde r_t$ is more accurate than the suboptimal estimator $\tilde r_t^{\rm sub}$, especially when $t$ increases. 

\newpage
\section{Discussion}\label{sec:disscussion}
This paper proposes a novel risk estimate for the generalization error 
of 
iterates generated by the proximal GD and proximal SGD algorithms in robust regression. 
The proposed risk estimates accurately capture the predictive risk of the iterates along the trajectory of the algorithms, and are
provably consistent (\Cref{thm:using-Sigma,thm:unknwon-Sigma}).
Three matrices in $\R^{T\times T}$ in \eqref{eq:W}-\eqref{eq:K} reveal the interplay between the squared risk, the residuals and the gradients, so that
the approximation \eqref{approx} holds. This structure is different from
the square loss case studied in \cite{bellec2024uncertainty} where only
two matrices (inverse of each other) are sufficient.

Let us mention some open questions along with potential future research directions. The first question regards the probabilistic model: we currently
assume Gaussian features $\bx_i$, and it would be of interest to study
the extension in which our consistency results are universal, allowing non-Gaussian feature distributions.
Second, it is of interest to extend the current estimates to more general optimization problems of the form \eqref{eq:hat-b} with non-smooth data-fitting loss, for instance the Least Absolute Deviation loss $\|\by - \bX \bb\|_1$.
In this case the gradient does not exist at the origin, which calls for
different algorithms than the GD and SGD variants presented here,
for instance the Alternating Direction Method of Multipliers (ADMM) \citep{boyd2011distributed}. 
It is of independent interest to derive the risk estimates for iterates obtained by such primal-dual methods.

\section*{Acknowledgments}
P. C. Bellec acknowledges partial support from the NSF Grants DMS-1945428
and DMS-2413679.
The authors acknowledge the Office of Advanced Research Computing (OARC) at Rutgers, The State University of New Jersey for providing access to the Amarel cluster and associated research computing resources that have contributed to the results reported here.
The authors thank the anonymous reviewers for their valuable comments and suggestions that helped improve the presentation of the paper.
\bibliographystyle{plainnat} 
\bibliography{main}


\newpage
\begin{center}
	\Large
	Supplementary Material 
	of 
	``Estimating Generalization Performance Along the Trajectory of Proximal SGD in Robust Regression''
\end{center}

\appendix

\section{Additional simulation results}
\label{sec:add_simulation_results}
The following figures illustrate the proposed risk estimates accurately estimate
the trajectory of the risk even when the step size changes at every step.

\begin{figure}[H]
	\centering
	\begin{subfigure}{0.5\linewidth}
		\centering
		\includegraphics[width=\linewidth]{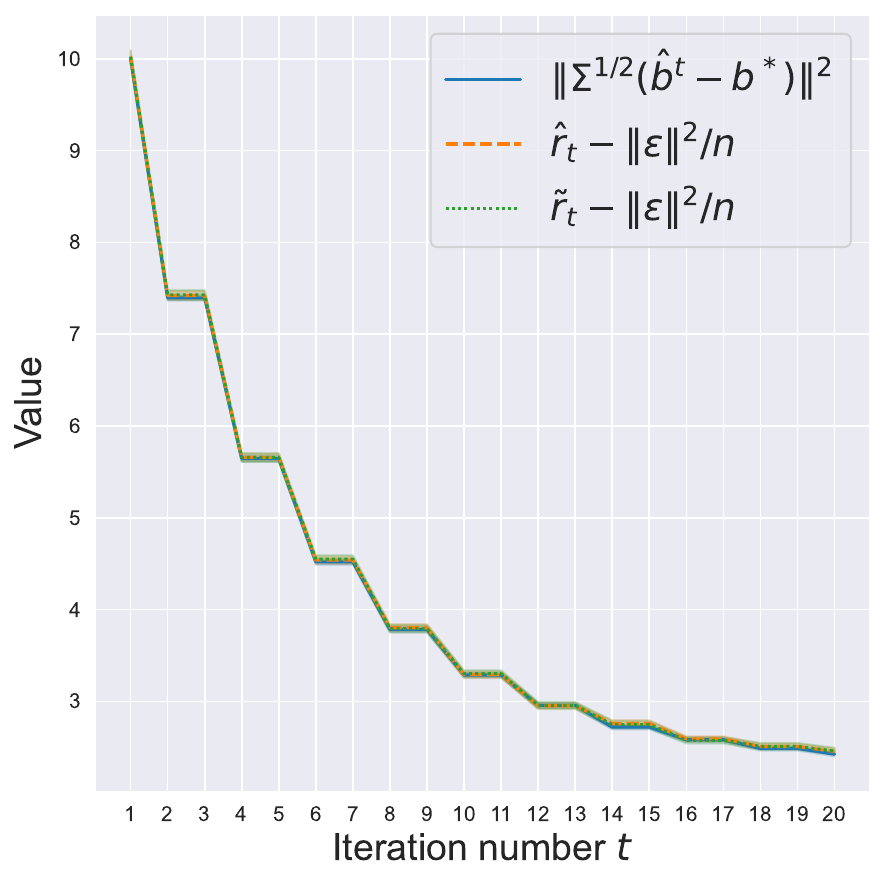}
		\caption{$\eta_t = 1$ if $t$ is odd, and $\eta_t = 0$ if $t$ is even.}
		\label{fig:eta_1}
		\end{subfigure}~ 
		\begin{subfigure}{0.5\linewidth}
		\centering
		\includegraphics[width=\linewidth]{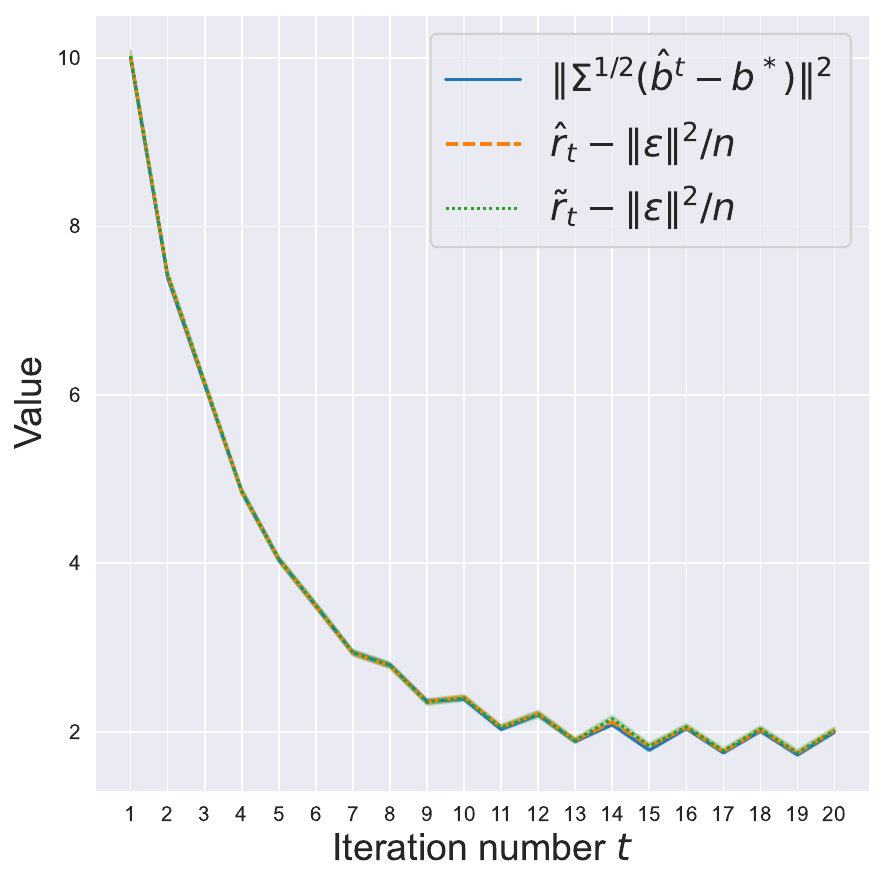}
		\caption{$\eta_t = 1$ if $t$ is odd, and $\eta_t = 0.5$ if $t$ is even.}
		\label{fig:eta_2}
		\end{subfigure}
	\caption{Risk curves for SGD applied to Huber regression with $(n,p) = (3000,1000)$ using different choices of step sizes. \textbf{Left panel:} $\eta_t = 1$ if $t$ is odd, and $\eta_t = 0$ if $t$ is even. \textbf{Right panel:} $\eta_t = 1$ if $t$ is odd, and $\eta_t = 0.5$ if $t$ is even.
	\label{fig:SGD_eta_dim_1_loss_1}
	}
\end{figure}

\Cref{fig:add-suboptimal} compares the performance of the proposed estimators $\hat r_t$, $\tilde r_t$ and the estimator $\tilde r_t^{\rm sub}$ generalized directly from \cite{bellec2024uncertainty}. 
It confirms that our proposed estimators outperforms $\tilde r_t^{\rm sub}$. 
\begin{figure}[H]
	\centering
	\begin{subfigure}{0.5\linewidth}
		\centering
		\includegraphics[width=\linewidth]{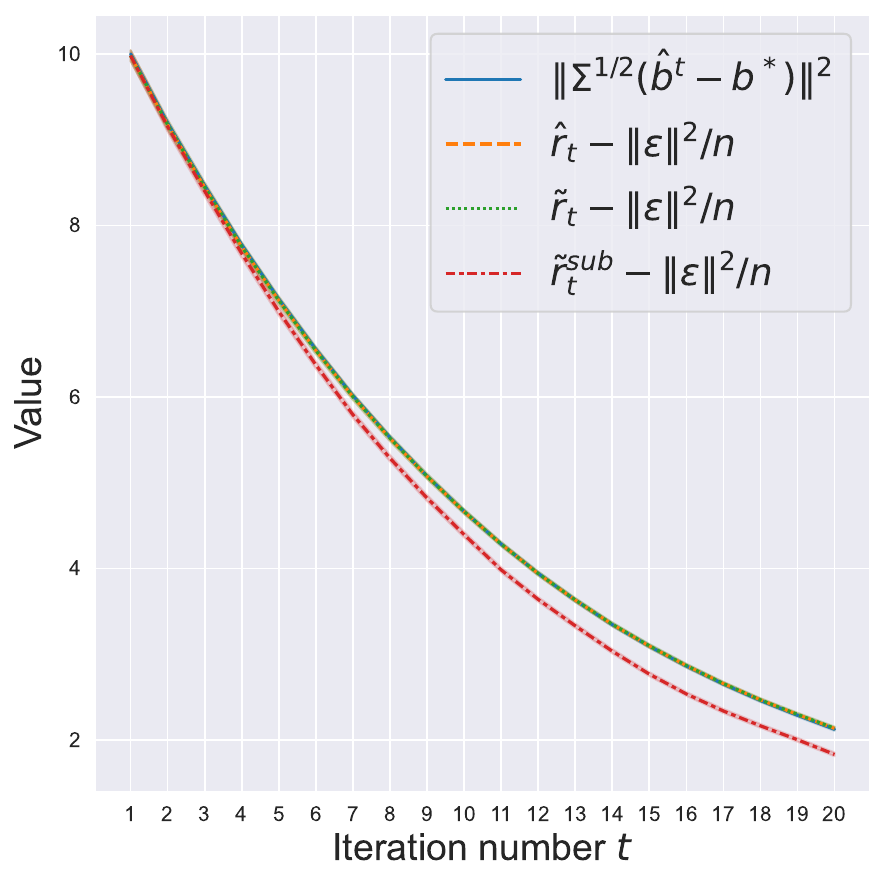}
		\caption{Huber regression}
		\label{fig:Huber-suboptimal}
		\end{subfigure}~ 
		\begin{subfigure}{0.5\linewidth}
		\centering
		\includegraphics[width=\linewidth]{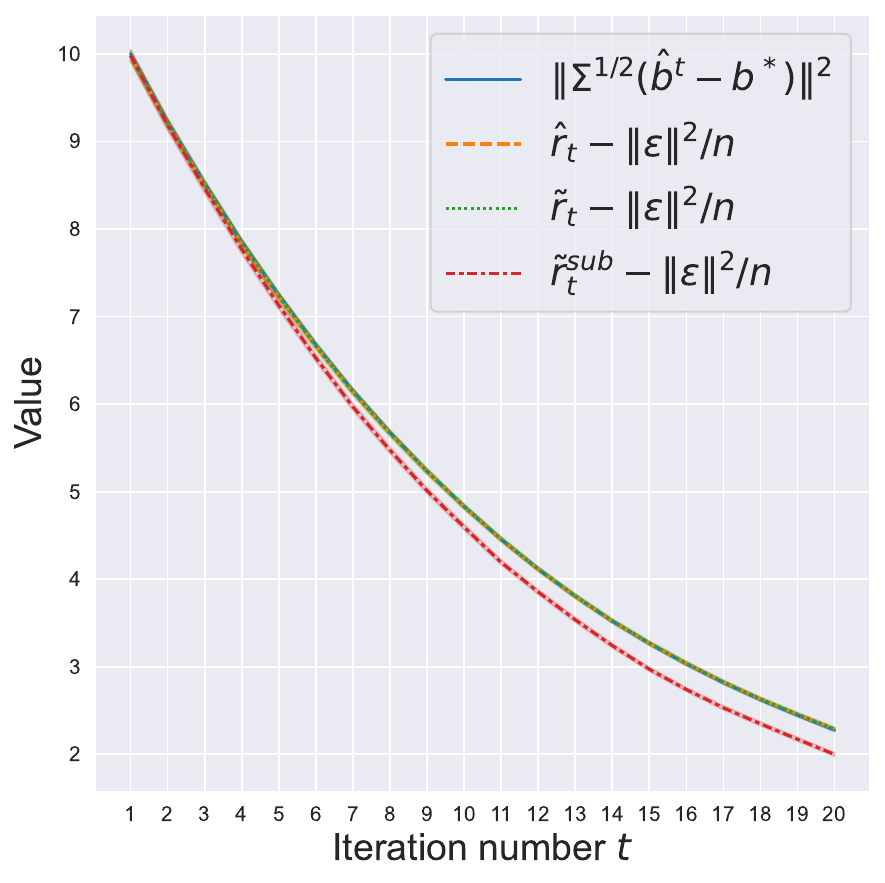}
		\caption{Pseudo-Huber regression}
		\label{fig:pseudo-Huber-suboptimal}
		\end{subfigure}
		\caption{Risk curves for SGD applied to Huber and pseudo-Huber regression with $(n,p,T)=(4000, 1000, 20)$, $|I_t|=n/10$ and $\eta_t=0.2$ for all $t$.
		\label{fig:add-suboptimal}}
\end{figure}

\section{Auxiliary Results}
Throughout, we define
\begin{align}\label{eq:FHR}
	&\bE = [\bep, ..., \bep]\in \R^{n\times T}, 
	\quad\quad\quad\quad
	\bF = [\bS_1\bpsi(\by-\bX\hbb^1), ..., \bS_T\bpsi(\by-\bX\hbb^T)]\in \R^{n\times T},\\
	\quad 
	&\bH = \bSigma^{1/2}[\hbb^1 - \bb^*, ..., \hbb^T - \bb^*] \in \R^{p\times T},
	\quad \quad
	\bR = [\by - \bX\hbb^1,..., \by - \bX\hbb^T]\in \R^{n\times T}. 
\end{align}
Note that in the above $\bE, \bH$ are not observable since $\bep$ and $\bb^*$ are unknown. However, $\bF$ and $\bR$ are observable and can be easily computed once the iterates $(\hbb^t)_{t\in[T]}$ are calculated.

\subsection{Change of variables}
\label{sec:change-var}
In this section, we conduct the change of variable to simplify the proof.
Specifically, we view the linear model $\by = \bX \bb^* + \bep$ as a model with design matrix $\bG$ and the regression vector $\btheta^*$, \ie
\begin{align*}
	\by = \bX \bb^* + \bep 
	= \underbrace{\bX \bSigma^{-1/2}}_{\bG} \underbrace{\bSigma^{1/2} \bb^*}_{\btheta^*} + \bep.
\end{align*}
This way, the design matrix $\bG$ has \iid entries from standard normal distribution. 
Using the same argument in \cite[Appendix D]{bellec2024uncertainty}, we can show that the matrices $\bH, \bF, \hbA, \hbK$ remains the same under the change of variable.
Therefore, we can prove the main results using the model with design matrix $\bG$ and the regression vector $\btheta^*$. 
In other words, we assume without of loss of generality that the design matrix $\bX$ has \iid $N(0,1)$, or equivalently that the independent
rows of $\bX$ are normally distributed with covariance $\bSigma = \bI_p$. 
We prove the main results using $\bSigma = \bI_p$, and the results for general $\bSigma$ follow by this change of variable with the constant $C(T,\gamma, \eta_{\max}, c_0, \delta)$ appearing in the bounds depending additionally on $\kappa$ (the upper bound of the condition number of $\bSigma$ from \Cref{assu:X}).

\subsection{Derivative formulae}
\label{sec:derivative}
In this section, we present derivative formulae that will be useful in later proofs. 
The following formulas differ from \cite{bellec2024uncertainty}
due to the use of robust loss functions and the application of SGD with random batches at each iteration. The formulae
are also significantly more complex than in the case of regularized
M-estimators
\cite{bellec2020out,bellec2021derivatives}.

\begin{lemma}[Proved in \Cref{proof-lem:dot-b}]
	\label{lem:dot-b}
	Let $(\hbb^t)_{t\in[T]}$ be the iterates generated from the recursion \eqref{eq:unified} and the initial value $\hbb^1$ is independent of $\bX$. 
	Then the derivative of $\hbb^t$ with respect to $\bX$ is given by
	\begin{align}
		\pdv{\hbb^t}{x_{ij}}
		= (\be_t^\top \otimes \bI_p) 
		\bGamma 
		\Bigl[((\bF^\top\be_i) \otimes\be_j)
		- (\bI_T\otimes \bX^\top) \calS \calD 
		((\bH^\top\be_j)\otimes \be_i)\Bigr]
		,\label{eq:db-dx}
	\end{align}
	where $\bGamma = \calM^{-1}\bL (\bLambda\otimes \bI_p) \tcalD$, 
	$\bL = \sum_{t=2}^T \bigl((\be_t\be_{t-1}^\top) \otimes \bI_p\bigr)$, 
	$\bLambda = 
	\sum_{t=1}^T \tfrac{\eta_t}{|I_t|}
	\be_t\be_t^\top$,
	and 
	\begin{align*}
		\calM = 
		\begin{bmatrix}
			\bI_p&&&&\\
			-\bP_1&\bI_p&&&\\
			&\ddots&\ddots&&\\
			&&-\bP_{T-1}&\bI_p&\\
		\end{bmatrix}
		\qquad\text{ where}\quad
		\bP_t = \tbD_t(\bI_p - \tfrac{\eta_t}{|I_t|} \bX^\top \bS_t \bD_t \bX). 
	\end{align*}
\end{lemma}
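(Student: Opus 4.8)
\section*{Proof proposal for \Cref{lem:dot-b}}

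The plan is to differentiate the update rule \eqref{eq:unified} directly and then solve the resulting linear recursion in closed form. Writing $\bv^t = \hbb^t + \frac{\eta_t}{|I_t|}\bX^\top\bS_t\bpsi(\by - \bX\hbb^t)$ so that $\hbb^{t+1} = \bphi_t(\bv^t)$, the chain rule gives $\frac{\partial\hbb^{t+1}}{\partial x_{ij}} = \tbD_t\frac{\partial\bv^t}{\partial x_{ij}}$. Differentiating $\bv^t$ requires tracking three sources of dependence on $x_{ij}$: the explicit factor $\bX^\top$, whose derivative $\be_j\be_i^\top$ produces the term $\be_j(\be_t^\top\bF^\top\be_i)$ once one recognizes $\be_i^\top\bS_t\bpsi(\by - \bX\hbb^t) = \be_t^\top\bF^\top\be_i$; the implicit dependence of $\hbb^t$ itself; and the dependence through the residual $\by - \bX\hbb^t$. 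The one subtlety here is that $\by = \bX\bb^* + \bep$ also depends on $\bX$, so that $\frac{\partial(\by - \bX\hbb^t)}{\partial x_{ij}} = -\be_i\be_j^\top(\hbb^t - \bb^*) - \bX\frac{\partial\hbb^t}{\partial x_{ij}}$; it is precisely the combination $\hbb^t - \bb^*$ (rather than $\hbb^t$) that surfaces, which is why the matrix $\bH$ built from $\hbb^t - \bb^*$ appears in the final formula. Collecting terms yields, for each $t$, the recursion $\frac{\partial\hbb^{t+1}}{\partial x_{ij}} = \bP_t\frac{\partial\hbb^t}{\partial x_{ij}} + \frac{\eta_t}{|I_t|}\tbD_t\big[\be_j(\be_t^\top\bF^\top\be_i) - \bX^\top\bS_t\bD_t\be_i(\be_t^\top\bH^\top\be_j)\big]$, with $\bP_t = \tbD_t(\bI_p - \frac{\eta_t}{|I_t|}\bX^\top\bS_t\bD_t\bX)$ exactly as defined.

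Next, since $\hbb^1$ is independent of $\bX$, the initial condition is $\frac{\partial\hbb^1}{\partial x_{ij}} = \boldzero$. I would then stack the vectors $(\frac{\partial\hbb^t}{\partial x_{ij}})_{t\in[T]}$ into a single $\bq\in\R^{pT}$ and rewrite the recursion as a block linear system. The lower-bidiagonal matrix $\calM$, with $\bI_p$ on the diagonal and $-\bP_t$ on the subdiagonal, is built precisely so that $(\calM\bq)_1 = \frac{\partial\hbb^1}{\partial x_{ij}}$ and $(\calM\bq)_t = \frac{\partial\hbb^t}{\partial x_{ij}} - \bP_{t-1}\frac{\partial\hbb^{t-1}}{\partial x_{ij}}$ for $t\ge 2$; the recursion together with the zero initialization then says exactly that $\calM\bq$ equals the once-shifted stacked forcing term. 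Because $\calM$ is lower triangular with unit diagonal blocks it is invertible, so $\bq = \calM^{-1}(\text{forcing})$.

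It then remains to identify the forcing vector with the Kronecker-product expression in the statement. Setting $\bz = (\bF^\top\be_i)\otimes\be_j - (\bI_T\otimes\bX^\top)\calS\calD\big((\bH^\top\be_j)\otimes\be_i\big)$, a block-by-block check shows that the $t$-th block of $\bz$ is exactly $\be_j(\be_t^\top\bF^\top\be_i) - \bX^\top\bS_t\bD_t\be_i(\be_t^\top\bH^\top\be_j)$, using that $\calD$ and $\calS$ act blockwise as $\bD_t$ and $\bS_t$. Applying $(\bLambda\otimes\bI_p)\tcalD$ scales the $t$-th block by $\frac{\eta_t}{|I_t|}\tbD_t$, reproducing the forcing, and the shift operator $\bL$ sends the content of block $t$ to block $t+1$ while killing the first block, matching the index shift between the recursion (step $t\to t+1$) and the system (row $t+1$). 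Hence the forcing equals $\bL(\bLambda\otimes\bI_p)\tcalD\,\bz$, so $\bq = \calM^{-1}\bL(\bLambda\otimes\bI_p)\tcalD\,\bz = \bGamma\bz$, and extracting the $t$-th block via $(\be_t^\top\otimes\bI_p)$ gives \eqref{eq:db-dx}. The main obstacle is bookkeeping rather than conceptual: one must account for \emph{every} $x_{ij}$-dependence (in particular the contribution of $\by$ through $\bb^*$, which produces $\bH$) and then faithfully translate the scalar/vector recursion into the Kronecker notation, verifying that $\bL$, $\bLambda\otimes\bI_p$, $\tcalD$, $\calS$, and $\calD$ each implement the intended blockwise operation and index shift.
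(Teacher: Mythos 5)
Your proposal is correct and follows essentially the same route as the paper's proof: differentiate the recursion \eqref{eq:unified} via the chain rule (noting that $\by-\bX\hbb^t=\bep-\bX(\hbb^t-\bb^*)$ is what makes $\bH$ appear), obtain the linear recursion with coefficient $\bP_t$ and the stated forcing, stack it into the block-bidiagonal system $\calM\bq=\bL\ba$ with zero initial block, and identify the forcing with the Kronecker expression $(\bLambda\otimes\bI_p)\tcalD\,\bz$. No gaps.
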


Recall $\bF = [\bS_1\bpsi(\by - \bX\hbb^1), ..., \bS_T\bpsi(\by - \bX\hbb^T)]$, we have $F_{lt} = \be_l^\top \bS_t\bpsi(\by - \bX\hbb^t)$. The following two corollaries are a direct consequence of \Cref{lem:dot-b}. 
\begin{lemma}[Proved in \Cref{proof-lem:dF-dx}
	]\label{lem:dF-dx}
	Under the same conditions of \Cref{lem:dot-b}. 
	Let $F_{lt}=\be_l^\top \bF \be_t = \be_l^\top \bS_t \bpsi(\by - \bX\hbb^t)$,
	we have 
	\begin{align}\label{eq:dF-dx}
		\pdv{F_{lt}}{x_{ij}}
		= D_{ij}^{lt} + \Delta_{ij}^{lt},
	\end{align}
	where 
	\begin{align*}
		D_{ij}^{lt} 
		&= -\be_l^\top \bS_t \bD_t \be_i \be_j^\top \bH \be_t 
		+ 
		((\be_j^\top \bH)\otimes \be_i^\top)\calD \calS (\bI_T\otimes \bX)
		\bGamma^\top
		(\bI_T\otimes \bX^\top) \calS \calD 
		(\be_t \otimes \be_l),\\
		\Delta_{ij}^{lt} 
		&= -
		((\be_i^\top\bF)\otimes \be_j^\top)
		\bGamma^\top 
		(\bI_T\otimes \bX^\top) \calS \calD 
		(\be_t \otimes \be_l).
	\end{align*}
\end{lemma}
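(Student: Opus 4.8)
\section{Proof proposal for \Cref{lem:dF-dx}}

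The plan is to obtain \Cref{lem:dF-dx} as a direct corollary of \Cref{lem:dot-b} via the chain rule. The scalar $F_{lt} = \be_l^\top \bS_t \bpsi(\by - \bX\hbb^t)$ depends on $x_{ij}$ only through the residual vector $\br_t = \by - \bX\hbb^t$, so with $\bD_t$ the Jacobian of $\bpsi$ evaluated at $\br_t$ I would first write $\pdv{F_{lt}}{x_{ij}} = \be_l^\top \bS_t \bD_t \pdv{\br_t}{x_{ij}}$. The residual carries two sources of dependence on $x_{ij}$: the explicit factor $\bX$ multiplying $\hbb^t - \bb^*$, whose derivative produces $\be_i\be_j^\top$, and the implicit dependence of $\hbb^t$ on the design. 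Since $\by = \bX\bb^* + \bep$ gives $\br_t = \bep - \bX(\hbb^t - \bb^*)$, this yields $\pdv{\br_t}{x_{ij}} = -\be_i (\hbb^t - \bb^*)_j - \bX\pdv{\hbb^t}{x_{ij}}$.

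The first contribution $-\be_l^\top\bS_t\bD_t\be_i(\hbb^t-\bb^*)_j$ is already the leading term of $D_{ij}^{lt}$, after recognizing $(\hbb^t-\bb^*)_j = \be_j^\top\bH\be_t$ (recall that under the reduction of \Cref{sec:change-var} we take $\bSigma = \bI_p$, so $\bH\be_t = \hbb^t-\bb^*$). For the remaining term $-\be_l^\top\bS_t\bD_t\bX\pdv{\hbb^t}{x_{ij}}$ I would substitute the closed form of $\pdv{\hbb^t}{x_{ij}}$ from \Cref{lem:dot-b}, namely $(\be_t^\top\otimes\bI_p)\bGamma[((\bF^\top\be_i)\otimes\be_j) - (\bI_T\otimes\bX^\top)\calS\calD((\bH^\top\be_j)\otimes\be_i)]$. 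This splits the remaining term into an ``$\bF$-piece'' and an ``$\bH$-piece'', which I claim equal $\Delta_{ij}^{lt}$ and the second term of $D_{ij}^{lt}$, respectively.

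The core of the argument is purely algebraic bookkeeping with Kronecker products. Each of these pieces is a scalar, so I would transpose it and use $(\bA\otimes\bB)^\top = \bA^\top\otimes\bB^\top$ to turn the $\bGamma$ appearing in \Cref{lem:dot-b} into the $\bGamma^\top$ appearing in the target. The reduction then relies on three elementary identities: the symmetry and mutual commutation of the diagonal matrices $\bS_t,\bD_t$; the block selection $\calS\calD(\be_t\otimes\be_l) = \be_t\otimes(\bS_t\bD_t\be_l)$ coming from the block-diagonal structure of $\calS,\calD$; and $(\bI_T\otimes\bX^\top)(\be_t\otimes\bv) = \be_t\otimes(\bX^\top\bv)$. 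Together these convert $(\be_t^\top\otimes\bI_p)^\top\bX^\top\bS_t\bD_t\be_l$ into $(\bI_T\otimes\bX^\top)\calS\calD(\be_t\otimes\be_l)$, matching the right-most factor appearing in both $D_{ij}^{lt}$ and $\Delta_{ij}^{lt}$.

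I expect the main obstacle to be neither conceptual nor analytic but rather the careful transposition step: keeping track of which Kronecker factor lands on which side, ensuring that the single active block index $t$ is correctly selected out of the block-diagonal $\calS\calD$, and confirming that $\bGamma$ transposes into exactly the $\bGamma^\top$ that the statement demands. Once the two pieces are identified in this way, collecting the explicit-derivative term together with the $\bH$-piece gives $D_{ij}^{lt}$, while the $\bF$-piece gives $\Delta_{ij}^{lt}$, completing the proof.
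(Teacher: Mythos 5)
Your proposal is correct and follows essentially the same route as the paper: chain rule through the residual $\by-\bX\hbb^t=\bep-\bX(\hbb^t-\bb^*)$ to get $\pdv{F_{lt}}{x_{ij}}=-\be_l^\top\bS_t\bD_t(\be_i\be_j^\top\bH\be_t+\bX\pdv{\hbb^t}{x_{ij}})$, then substitution of the closed form from \Cref{lem:dot-b} and the Kronecker identity $\be_t\otimes(\bD_t\bS_t\be_l)=\calD\calS(\be_t\otimes\be_l)$ (using that $\bS_t,\bD_t$ are diagonal, hence symmetric and commuting) to identify the $\bF$-piece with $\Delta_{ij}^{lt}$ and the $\bH$-piece with the second term of $D_{ij}^{lt}$. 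The paper's proof is just a terser version of exactly this transposition bookkeeping.
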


\begin{lemma}[Proved in \Cref{proof-lem:dtF-dx}]
	\label{lem:dtF-dx}
	Let $\tbF = [\bpsi(\by - \bX \hbb^1), ..., \bpsi(\by - \bX \hbb^T)]$ and $\tbF_{l,t} = \be_l^\top \tbF \be_t$. 
	Under the same conditions of \Cref{lem:dot-b}. We have 
	\begin{align}
		\pdv{\tbF_{l,t}}{x_{ij}}
		= \tD_{ij}^{lt} + \tDelta_{ij}^{lt},
	\end{align}
	where 
	\begin{align*}
		\tD_{ij}^{lt} 
		&= -\be_l^\top \bD_t \be_i \be_j^\top \bH \be_t 
		+ 
		((\be_j^\top \bH)\otimes \be_i^\top)\calD \calS (\bI_T\otimes \bX)
		\bGamma^\top
		(\bI_T\otimes \bX^\top) \calD 
		(\be_t \otimes \be_l),\\
		\tDelta_{ij}^{lt} 
		&= -
		((\be_i^\top\bF)\otimes \be_j^\top)
		\bGamma^\top 
		(\bI_T\otimes \bX^\top) \calD 
		(\be_t \otimes \be_l).
	\end{align*}
\end{lemma}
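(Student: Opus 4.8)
The plan is to treat \Cref{lem:dtF-dx} as a direct corollary of \Cref{lem:dot-b}, obtained by one application of the chain rule followed by Kronecker-product bookkeeping; structurally the argument is identical to the one behind \Cref{lem:dF-dx}, with the sole difference that the batch mask $\bS_t$ is absent from $\tbF$.

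First I would differentiate the scalar $\tbF_{l,t} = \be_l^\top\bpsi(\by - \bX\hbb^t)$. Since $\bpsi$ acts coordinatewise with diagonal Jacobian $\bD_t$ evaluated at the residual, the chain rule gives $\pdv{\tbF_{l,t}}{x_{ij}} = \be_l^\top\bD_t\,\pdv{(\by-\bX\hbb^t)}{x_{ij}}$. Next, writing $\by - \bX\hbb^t = -\bX(\hbb^t - \bb^*) + \bep$ and differentiating with $\bep$ held fixed yields $\pdv{(\by-\bX\hbb^t)}{x_{ij}} = -\be_i\be_j^\top(\hbb^t-\bb^*) - \bX\,\pdv{\hbb^t}{x_{ij}}$. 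Working under $\bSigma = \bI_p$ (legitimate by the change of variables in \Cref{sec:change-var}), the first piece contributes exactly $-\be_l^\top\bD_t\be_i\,\be_j^\top\bH\be_t$, because $\be_j^\top(\hbb^t-\bb^*) = \be_j^\top\bH\be_t$.

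The substantive step is to substitute the expression for $\pdv{\hbb^t}{x_{ij}}$ from \Cref{lem:dot-b} into the remaining term $-\be_l^\top\bD_t\bX\,\pdv{\hbb^t}{x_{ij}}$ and reorganize it into the claimed Kronecker form. The key identity I would establish is $\be_l^\top\bD_t\bX(\be_t^\top\otimes\bI_p) = (\be_t\otimes\be_l)^\top\calD(\bI_T\otimes\bX)$, which follows by transposing and using $\calD(\be_t\otimes\be_l) = \be_t\otimes(\bD_t\be_l)$ together with the symmetry $\calD^\top=\calD$. Splitting the bracket $(\bF^\top\be_i)\otimes\be_j - (\bI_T\otimes\bX^\top)\calS\calD((\bH^\top\be_j)\otimes\be_i)$ from \Cref{lem:dot-b} into its two summands, the first summand produces $\tDelta_{ij}^{lt}$ and the second produces the remaining term of $\tD_{ij}^{lt}$; each identification is made by transposing the resulting scalar and collecting factors, using the mixed-product property $(\bA\otimes\bB)(\bC\otimes\bD)=(\bA\bC)\otimes(\bB\bD)$ and the commuting symmetry $\calS\calD = \calD\calS$ of the block-diagonal matrices.

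I expect the only genuine obstacle to be the Kronecker-product and transpose bookkeeping rather than anything conceptual: one must track the order of factors carefully and verify that dropping the mask $\bS_t$ present in $\bF$ corresponds precisely to replacing the outer factor $\calS\calD$ (the one arising from differentiating $\bpsi$ with the mask $\bS_t$ in \Cref{lem:dF-dx}) by $\calD$ alone, while the inner $\calD\calS$ inherited from \Cref{lem:dot-b} remains unchanged. Comparing term-by-term against \Cref{lem:dF-dx} — whose proof has the identical structure but retains $\bS_t$ in the leading factor and hence the outer $\calS$ — provides a clean consistency check that the algebra has been carried out correctly.
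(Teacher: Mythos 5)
Your proposal is correct and follows essentially the same route as the paper: the paper's proof of \Cref{lem:dtF-dx} is exactly the one-line chain-rule computation $\pdv{\tF_{lt}}{x_{ij}} = -\be_l^\top\bD_t(\be_i\be_j^\top\bH\be_t + \bX\pdv{\hbb^t}{x_{ij}})$ followed by substituting the expression from \Cref{lem:dot-b}, with the Kronecker-product bookkeeping (which you spell out in more detail, correctly) left implicit. The identification of the only difference from \Cref{lem:dF-dx} as the replacement of the outer $\calS\calD$ factor by $\calD$ is also exactly right.
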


\begin{definition}
	\label{def:identity-sum-pdv}
        Define the matrices
        $\bUpsilon_1\in \R^{p\times T}$,
        $\bUpsilon_2\in \R^{n\times T}$,
        $\bUpsilon_3\in \R^{T\times T}$,
        $\bUpsilon_4\in \R^{T\times T}$,
        $\bUpsilon_5\in \R^{T\times T}$
        by the identities
	\begin{align}
		\label{eq:1}
                    \forall j\in[p],\quad 
		&\sum_{i=1}^n \pdv{\be_i^\top \bF}{x_{ij}} 
		= -\be_j^\top \bH \tbK^\top -\be_j^\top \bUpsilon_1,\\
		\label{eq:2}
                    \forall i\in[n],\quad 
		&\sum_{j=1}^p \pdv{\be_j^\top \bH}{x_{ij}}
		= \be_i^\top \bF \bW^\top - \be_i^\top \bUpsilon_2,\\
		\label{eq:3}
		&\sum_{i=1}^n\sum_{j=1}^p \pdv{\bF^\top \be_i\be_j^\top \bH}{x_{ij}}
		= - \tbK \bH^\top \bH + \bF^\top \bF \bW^\top 
		- \bUpsilon_3,\\
		\label{eq:4}
		&\sum_{i=1}^n\sum_{j=1}^p \pdv{\bH^\top \bX^\top \be_i\be_j^\top \bH}{x_{ij}}
		= (n\bI_T - \tbA)\bH^\top \bH + \bH^\top\bX^\top\bF\bW^\top - \bUpsilon_4,\\
		\label{eq:5}
		&\sum_{i=1}^n\sum_{j=1}^p \pdv{\bF^\top \be_i\be_j^\top\bX^\top \tbF}{x_{ij}}
		= -\tbK\bH^\top \bX^\top \tbF + 
		p \bF^\top\tbF - \bF^\top \bF \hbA^\top - \bUpsilon_5,
	\end{align}
	where the matrices $\tbK, \hbA, \tbA, \bW$ are defined as follows 
	\begin{align} 
		\label{eq:tbA}
		\tbA
		&= 
		\sum_{i=1}^n (\bI_T \otimes \be_i^\top) (\bI_T \otimes \bX) 
		\bGamma
		(\bI_T \otimes \bX^\top) \calS \calD (\bI_T \otimes \be_i), \\
		\label{eq:hbA}
		\hbA 
		&= 
		\sum_{i=1}^n (\bI_T \otimes \be_i^\top) \calD (\bI_T \otimes \bX) 
		\bGamma
		(\bI_T \otimes \bX^\top) (\bI_T \otimes \be_i), \\
		\label{eq:bW}
		\bW 
                &= \sum_{j=1}^p (\bI_T \otimes \be_j^\top) 
		\bGamma
		(\bI_T \otimes \be_j),\\
		\label{eq:tbK}
		\tbK
		&= 
		\sum_{t=1}^T \trace(\bS_t \bD_t) \be_t\be_t^\top 
		- 
		\sum_{i=1}^n (\bI_T \otimes \be_i^\top) \calS \calD (\bI_T \otimes \bX) 
		\bGamma
		(\bI_T \otimes \bX^\top) \calS \calD (\bI_T \otimes \be_i).
	\end{align}
\end{definition}

The matrices $\bUpsilon_1, \bUpsilon_2, \dots$ are negligible in the sense that their Frobenius norms are of smaller orders compared to their preceding terms in \eqref{eq:1}--\eqref{eq:5}.
We provide the bounds in next lemma,
which is obtained by deriving alternative expressions for
    $\bUpsilon_1,...,\bUpsilon_5$ in \Cref{proof-def:identity-sum-pdv}.
\begin{lemma}[Proved in \Cref{proof-lem:Upsilon-bound}]\label{lem:Upsilon-bound}
	Under the same conditions of \Cref{thm:using-Sigma} with $\bSigma = \bI_p$, we have 
	\begin{align*}
		\max_{k\in\{1,3,4\}} \E [\opnorm{\bUpsilon_k}^2 \mid \bep] 
		&\le C(T, \gamma, \eta_{\max}, c_0) (\delta^2 + \norm{\bb^*}^2),\\
		\E [\opnorm{\bUpsilon_2}^2 \mid \bep] 
		&\le n^{-1} C(T, \gamma, \eta_{\max}, c_0) (\delta^2 + \norm{\bb^*}^2),\\
		\E [\opnorm{\bUpsilon_5}^2 \mid \bep] 
		&\le n^2 C(T, \gamma, \eta_{\max}, c_0) (\delta^2 + \norm{\bb^*}^2).
	\end{align*}
\end{lemma}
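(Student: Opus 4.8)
The plan is to first obtain closed-form expressions for each $\bUpsilon_k$, and then bound their operator norms by repeatedly exploiting $\opnorm{\bD_t}\le 1$, $\opnorm{\bS_t}\le 1$, $\opnorm{\tbD_t}\le 1$ together with the concentration of $\opnorm{\bX}$. The starting point is the derivative identities \eqref{eq:1}--\eqref{eq:5}: substituting the formulae of \Cref{lem:dF-dx,lem:dtF-dx} into these sums splits each sum into a \emph{main} contribution built from the $D_{ij}^{lt}$ (resp.\ $\tD_{ij}^{lt}$) terms and a \emph{remainder} built from the $\Delta_{ij}^{lt}$ (resp.\ $\tDelta_{ij}^{lt}$) terms. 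Once the traces defining $\tbK,\hbA,\tbA,\bW$ are recognised, the main contributions reproduce exactly the explicit right-hand sides $-\be_j^\top\bH\tbK^\top$, $\be_i^\top\bF\bW^\top$, etc., so each $\bUpsilon_k$ equals (up to sign) the accumulated remainder. This identification is precisely what \Cref{proof-def:identity-sum-pdv} carries out, and it expresses $\bUpsilon_1,\dots,\bUpsilon_5$ as products of the fixed blocks $\bGamma,\calD,\calS,(\bI_T\otimes\bX)$ with the data matrices $\bF,\tbF,\bH$.

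Next I would record the deterministic operator-norm bounds on the building blocks. Since $\psi$ is $1$-Lipschitz, the diagonal entries of each $\bD_t$ lie in $[0,1]$, so $\opnorm{\calD}\le 1$; likewise $\opnorm{\calS}\le 1$, and since each $\bphi_t$ is $1$-Lipschitz, $\opnorm{\tcalD}\le 1$. From $\bLambda=\sum_t\tfrac{\eta_t}{|I_t|}\be_t\be_t^\top$ and $|I_t|\ge c_0 n$ one gets $\opnorm{\bLambda}\le \eta_{\max}/(c_0 n)$ and $\opnorm{\bL}\le 1$. The matrix $\bI-\calM$ is strictly lower block-triangular, hence nilpotent, so $\calM^{-1}=\sum_{k=0}^{T-1}(\bI-\calM)^k$ is a finite sum of products of the blocks $\bP_1,\dots,\bP_{T-1}$, giving $\opnorm{\calM^{-1}}\le \sum_{k=0}^{T-1}\bigl(\max_t\opnorm{\bP_t}\bigr)^k$ with $\opnorm{\bP_t}\le 1+\tfrac{\eta_{\max}}{c_0 n}\opnorm{\bX}^2$. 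Combining, $\opnorm{\bGamma}\le \opnorm{\calM^{-1}}\,\opnorm{\bL}\,\opnorm{\bLambda\otimes\bI_p}\,\opnorm{\tcalD}\le C(T,\gamma,\eta_{\max},c_0)\,n^{-1}$ on the event $\{\opnorm{\bX}^2\le C(\gamma) n\}$; this is where both the $n^{-1}$ scale of $\bGamma$ and the $T^T$-type dependence flagged in the remark originate.

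Then come the two genuinely data-dependent bounds. Since $|\psi|\le\delta$, each column of $\bF$ and of $\tbF$ has Euclidean norm at most $\delta\sqrt n$, so $\opnorm{\bF}\le\fnorm{\bF}\le\delta\sqrt{nT}$ and similarly for $\tbF$, deterministically. For $\bH$ (with $\bSigma=\bI_p$) I would iterate the recursion \eqref{eq:unified}: the $1$-Lipschitzness of $\bphi_t$ with $\bphi_t(\boldzero)=\boldzero$ and $\|\bS_t\bpsi(\by-\bX\hbb^t)\|\le\delta\sqrt n$ yield $\|\hbb^{t+1}\|\le\|\hbb^{t}\|+\tfrac{\eta_{\max}\delta}{c_0}\,\opnorm{\bX}/\sqrt n$, hence $\opnorm{\bH}\le \sqrt T\bigl(\|\hbb^1\|+\|\bb^*\|+C(T,\eta_{\max},c_0)\,\delta\,\opnorm{\bX}/\sqrt n\bigr)$, and so $\opnorm{\bH}^2\le C(T,\gamma,\eta_{\max},c_0)(\delta^2+\|\bb^*\|^2)$ on the good event. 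Inserting these into the closed forms of $\bUpsilon_1,\dots,\bUpsilon_5$ and counting factors --- each $\bGamma$ contributing $n^{-1}$, each $\bF,\tbF$ and each $(\bI_T\otimes\bX)$ contributing $\sqrt n$, and the explicit $p\asymp n$ prefactor appearing in \eqref{eq:5} --- produces the three distinct powers of $n$ in the statement: $\bUpsilon_2$ carries one extra $\bGamma$ relative to $\bUpsilon_{1,3,4}$ (hence the $n^{-1}$), while $\bUpsilon_5$ carries an extra $\bX$ factor together with the $p$ prefactor (hence $n^{2}$ in the squared bound).

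Finally, to pass from these good-event estimates to the conditional bound $\E[\,\cdot\mid\bep\,]$ (randomness being over $\bX$ and the batches), I would split the expectation over $\{\opnorm{\bX}\le 2(\sqrt n+\sqrt p)\}$ and its complement. On the complement, which by Gaussian concentration of the operator norm has probability at most $e^{-n/2}$, I bound $\opnorm{\bUpsilon_k}$ by a crude polynomial in $\opnorm{\bX}$ --- each $\bUpsilon_k$ is polynomial in $\bX$ of degree bounded in terms of $T$ through $\calM^{-1}$ --- and apply Cauchy--Schwarz, so that $\E[\opnorm{\bUpsilon_k}^2\boldone_{\mathrm{bad}}]\le \sqrt{\E[\opnorm{\bUpsilon_k}^4]}\,\sqrt{\P(\mathrm{bad})}$ is exponentially small and absorbed into $C(\cdot)$. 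I expect the main obstacle to be the control of $\calM^{-1}$: it depends on $\bX$ both through $\bX^\top\bS_t\bD_t\bX$ and through the $\bX$-dependent Jacobians $\bD_t,\tbD_t$, so $\opnorm{\bGamma}=O(1/n)$ holds only on the good event and the $\bP_t$-products in $\calM^{-1}$ must be arranged not to spoil the exponentially small tail --- which is exactly why the constant $C(T,\gamma,\eta_{\max},c_0)$ ends up growing like $T^T$.
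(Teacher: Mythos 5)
Your overall strategy is the same as the paper's: derive closed-form expressions for the remainders $\bUpsilon_1,\dots,\bUpsilon_5$ from the $\Delta$-terms of \Cref{lem:dF-dx,lem:dtF-dx}, bound $\opnorm{\bGamma}\le n^{-1}C(T,\eta_{\max},c_0)(1+\xi)^T$ via the nilpotent Neumann series for $\calM^{-1}$, and combine with $\fnorm{\bF}\vee\fnorm{\tbF}\le\delta\sqrt{nT}$ and the recursion bound on $\bH$. The only methodological difference is at the last step: the paper never truncates to a good event for this lemma, instead keeping the random factor $(1+\xi)^T$ with $\xi=\tfrac{\eta_{\max}}{c_0n}\opnorm{\bX}^2$ and closing the argument with the all-order moment bounds $\E[\opnorm{\bX/\sqrt n}^{2k}]\le C(\gamma,k)$ of \Cref{lem:HF-moment} plus Cauchy--Schwarz; your good-event/bad-event split with a crude polynomial bound on the complement is a legitimate alternative that yields the same conclusion.

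One point needs correcting before this becomes a proof, because it concerns exactly the content of the lemma (the distinct powers of $n$). Your attribution of those powers is wrong. Both $\bUpsilon_1$ and $\bUpsilon_2$ contain exactly one factor of $\bGamma$ and one factor of $\bX$; the reason $\bUpsilon_2$ is smaller by $n^{-1/2}$ is not ``one extra $\bGamma$'' but that its data factor is $\bH$ (with $\fnorm{\bH}=O(1)$) rather than $\bF$ (with $\fnorm{\bF}=O(\sqrt n)$): concretely $\opnorm{\bUpsilon_1}\le T\opnorm{\bGamma}\opnorm{\bX}\fnorm{\bF}=O(1)$ while $\opnorm{\bUpsilon_2}\le T\opnorm{\bGamma}\opnorm{\bX}\fnorm{\bH}=O(n^{-1/2})$. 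Likewise, $\bUpsilon_5=\bUpsilon_1^\top\bX^\top\tbF-\tbUpsilon_2$ picks up its extra factor of $n$ from the product $\opnorm{\bX}\fnorm{\tbF}\asymp\sqrt n\cdot\sqrt n$ (and the analogous terms in $\tbUpsilon_2$), not from ``the $p$ prefactor'' of \eqref{eq:5} --- the $p\,\bF^\top\tbF$ term there belongs to the main part of the identity, not to the remainder $\bUpsilon_5$. Your stated accounting rules ($\bGamma\mapsto n^{-1}$, $\bF,\tbF,\bX\mapsto n^{1/2}$, $\bH\mapsto 1$) are correct and, applied to the actual expressions from \Cref{proof-def:identity-sum-pdv}, do reproduce the three rates in the statement; but the explanations you give for $\bUpsilon_2$ and $\bUpsilon_5$ indicate you have not checked those expressions, and they must be verified term by term (including the auxiliary pieces $\tbUpsilon_1,\tbUpsilon_2$ entering $\bUpsilon_4$ and $\bUpsilon_5$) for the argument to be complete.
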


We further define a few matrices of size $T\times T$:
\begin{equation}\label{eq:Theta}
\begin{aligned}
	&\bTheta_1
	= \bF^\top\bX\bH + \tbK \bH^\top \bH - \bF^\top \bF \bW^\top,\\
	&\bTheta_2
	= n^{-1} [\bF^\top\bX\bX^\top\tbF + \tbK\bH^\top \bX^\top \tbF - p\bF^\top\tbF 
	+ \bF^\top\bF\hbA^\top
	],\\
	&\bTheta_3
	= \bH^\top\bX^\top\bX\bH - (n\bI_T - \tbA)\bH^\top \bH - \bH^\top\bX^\top\bF\bW^\top,\\
	&\bTheta_4
	= \frac pn \bF^\top \tbF - \frac1n 
	(\tbK\bH^\top + \bF^\top \bX)
	(\hbK\bH^\top + \tbF^\top \bX)^\top,\\
	&\bTheta_5
	= n\bH^\top \bH - 
	(\bW\bF^\top -\bH^\top\bX^\top)(\bW\bF^\top -\bH^\top\bX^\top)^\top,\\
	&\bTheta_6
        = {\fnorm{\bE}^{-1}} (\bE^\top \bX\bH - \bE^\top \bF\bW^\top).
\end{aligned}
\end{equation}
The next lemma provides the moment bounds for the Frobenius norm of these matrices.
\begin{lemma}[Proved in \Cref{proof-lem:Theta-bound}]\label{lem:Theta-bound}
	Under the same conditions of \Cref{thm:using-Sigma}, we have 
	\begin{align}
		\max_{k\in\{1,2,3\}} 
		\E [\fnorm{\bTheta_k}^2 \mid \bep]  &\le n C(T, \gamma, \eta_{\max}, c_0) (\delta^2 + \norm{\bb^*}^2),\\
		\max_{k\in\{4,5\}} 
		\E [\fnorm{\bTheta_k} \mid \bep]  &\le n^{1/2} C(T, \gamma, \eta_{\max}, c_0) (\delta^2 + \norm{\bb^*}^2),\\
		\E [\fnorm{\bTheta_6}^2\mid \bep] &\le C(T, \gamma, \eta_{\max}, c_0) (\delta^2 + \norm{\bb^*}^2),
	\end{align}
	almost surely, where $\E[\cdot\mid\bep]$ is the conditional
        expectation given $\bep$.
\end{lemma}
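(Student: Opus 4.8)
The plan is to exploit that, after the reduction to $\bSigma=\bI_p$ of \Cref{sec:change-var} (so that $\bX$ has \iid $N(0,1)$ entries), every matrix in \eqref{eq:Theta} is the discrepancy of a Gaussian integration-by-parts (Stein) identity applied to a bilinear or quadratic form in $\bX$. Working conditionally on $\bep$ throughout, I would treat each $\bTheta_k$ through the split $\bTheta_k = \E[\bTheta_k\mid\bep] + (\bTheta_k - \E[\bTheta_k\mid\bep])$, bounding the conditional mean via the deterministic identities \eqref{eq:1}--\eqref{eq:5}, and bounding the conditional fluctuation by the Gaussian Poincaré inequality $\var[f\mid\bep]\le\E[\norm{\nabla_\bX f}^2\mid\bep]$ applied entrywise. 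A recurring ingredient is the a priori bound $\max_t\norm{\hbb^t-\bb^*}^2\le C(T,\eta_{\max},c_0)(\delta^2+\norm{\bb^*}^2)$, which follows because each update in \eqref{eq:unified} moves $\hbb^t$ by at most $\tfrac{\eta_{\max}}{c_0 n}\opnorm{\bX}\cdot\delta\sqrt n=O(1)$ using $|\psi|\le\delta$, $\opnorm{\bX}\le C\sqrt n$, and the $1$-Lipschitzness and $\bphi_t(\boldzero)=\boldzero$ from \Cref{assu:rho-1}; this is the origin of the $(\delta^2+\norm{\bb^*}^2)$ factor and of $\norm{\bH\be_t}^2=\norm{\hbb^t-\bb^*}^2$.

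For the mean part of the second-moment quantities $\bTheta_1,\bTheta_2,\bTheta_3$, Stein's identity $\E[x_{ij}g\mid\bep]=\E[\partial g/\partial x_{ij}\mid\bep]$ applied entrywise to $\bF^\top\bX\bH$, $\bH^\top\bX^\top\bX\bH$, and $\bF^\top\bX\bX^\top\tbF$ turns the conditional mean into the left-hand sides of \eqref{eq:3}, \eqref{eq:4}, \eqref{eq:5} respectively (the factors $n$, $\tbA$, $\hbA$, $p$ arise from differentiating the second $\bX$ factor). By construction these left-hand sides equal the correction matrices subtracted in \eqref{eq:Theta}, up to $-\bUpsilon_3,-\bUpsilon_4,-\bUpsilon_5$; hence $\E[\bTheta_1\mid\bep]=-\E[\bUpsilon_3\mid\bep]$, $\E[\bTheta_3\mid\bep]=-\E[\bUpsilon_4\mid\bep]$, and $\E[\bTheta_2\mid\bep]=-n^{-1}\E[\bUpsilon_5\mid\bep]$, all controlled by \Cref{lem:Upsilon-bound} (note the $n^2$ in the $\bUpsilon_5$ bound is exactly absorbed by the $n^{-1}$). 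For $\bTheta_4$ and $\bTheta_5$, the identities \eqref{eq:1}--\eqref{eq:2} (and the $\tbF$-analogue of \eqref{eq:1} producing $\hbK$) show that these are the concentration errors of the sample second-moment matrices $\tfrac1n\bF^\top\bX\bX^\top\tbF$ and $(\bX\bH)^\top(\bX\bH)$ about their conditional means $\tfrac pn\bF^\top\tbF$ and $n\bH^\top\bH$; because they are quadratic in the centered fluctuations, only a first-moment Frobenius bound of order $\sqrt n$ is available, which I would obtain from the same integration-by-parts/Poincaré analysis combined with Cauchy--Schwarz on the two constituent fluctuations.

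The dominant $\sqrt n$ size of $\bTheta_1,\bTheta_2,\bTheta_3$ comes from the fluctuation term. For $(\bF^\top\bX\bH)_{st}=\sum_{i,j}F_{is}x_{ij}H_{jt}$ the explicit part of $\partial/\partial x_{ij}$ is $F_{is}H_{jt}$, contributing $\sum_{i,j}F_{is}^2H_{jt}^2=\norm{\bF\be_s}^2\norm{\bH\be_t}^2\le\delta^2|I_s|\,\norm{\hbb^t-\bb^*}^2$ to $\norm{\nabla_\bX(\bF^\top\bX\bH)_{st}}^2$; summing over $s,t$ and using the iterate bound yields exactly the target $nC(\delta^2+\norm{\bb^*}^2)$. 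The remaining gradient contributions are the implicit terms through $\partial\bF/\partial\bX$ and $\partial\bH/\partial\bX$, supplied explicitly by \Cref{lem:dF-dx,lem:dot-b,lem:dtF-dx}. \emph{Controlling these is the main obstacle}: each such derivative passes through $\bGamma=\calM^{-1}\bL(\bLambda\otimes\bI_p)\tcalD$, and since $\calM$ is block lower-triangular with blocks $\bP_t=\tbD_t(\bI_p-\tfrac{\eta_t}{|I_t|}\bX^\top\bS_t\bD_t\bX)$, its inverse $\calM^{-1}$ is a sum of products of up to $T$ such blocks; bounding $\opnorm{\calM^{-1}}$, hence $\opnorm{\bGamma}$, produces the $T^T$-type growth absorbed into $C(T,\gamma,\eta_{\max},c_0)$. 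I would obtain a deterministic bound $\opnorm{\bP_t}\le 1+\tfrac{\eta_{\max}}{c_0 n}\opnorm{\bX}^2$ from $\opnorm{\bD_t}\le1$, $\opnorm{\tbD_t}\le1$, $|I_t|\ge c_0 n$, $\eta_t\le\eta_{\max}$, then propagate it through $\bGamma$ on the event $\{\opnorm{\bX}\le C\sqrt n\}$ to show these derivative contributions are of the same $O(n)$ order; off this event the exponentially small probability contributes negligibly after a Cauchy--Schwarz split, so the bounds hold as stated in conditional-moment form.

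Finally, $\bTheta_6=\fnorm{\bE}^{-1}(\bE^\top\bX\bH-\bE^\top\bF\bW^\top)$ is handled with $\bep$ held fixed and independent of $\bX$: since each column of $\bE$ equals $\bep$, the quantity $\bep^\top\bX\bH-\bep^\top\bF\bW^\top$ is again the Stein discrepancy governed by \eqref{eq:1}--\eqref{eq:2} with $\bep$ in place of the unit-vector sums, and the normalization $\fnorm{\bE}^{-1}=(\sqrt T\,\norm{\bep})^{-1}$ cancels the $\norm{\bep}$ factor, leaving the $n$-independent bound $C(\delta^2+\norm{\bb^*}^2)$. Assembling the pieces via $\fnorm{\cdot}^2\le T\opnorm{\cdot}^2$ to convert the operator-norm bounds of \Cref{lem:Upsilon-bound} into Frobenius bounds, and adding the Poincaré variance contributions, yields the three claimed orders: $n$ for $k\in\{1,2,3\}$ (a single fluctuation), $\sqrt n$ for $k\in\{4,5\}$ (a quadratic in centered fluctuations, first moment via Cauchy--Schwarz), and $1$ for $k=6$ (the $\norm{\bep}$-normalized case).
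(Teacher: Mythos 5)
Your overall architecture matches the paper's: reduce to $\bSigma=\bI_p$, identify the conditional mean of each $\bTheta_k$ through the identities \eqref{eq:1}--\eqref{eq:5} and the $\bUpsilon_k$ bounds of \Cref{lem:Upsilon-bound}, control $\opnorm{\bGamma}$ through $\opnorm{\bP_t}\le 1+\tfrac{\eta_{\max}}{c_0 n}\opnorm{\bX}^2$ and the nilpotent expansion of $\calM^{-1}$, and treat $\bTheta_6$ by normalizing $\bE$ by $\fnorm{\bE}$. The distinction you draw between the second-moment bounds for $k\in\{1,2,3\}$ and the first-moment, Cauchy--Schwarz-type bounds for $k\in\{4,5\}$ is also the right one.

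The genuine gap is in your fluctuation step. You propose to control $\bTheta_k-\E[\bTheta_k\mid\bep]$ by the Gaussian Poincar\'e inequality applied entrywise. But $\bTheta_1=\bF^\top\bX\bH+\tbK\bH^\top\bH-\bF^\top\bF\bW^\top$ (and likewise $\bTheta_2,\bTheta_3$) contains the random correction matrices $\tbK$, $\bW$, $\hbA$, $\tbA$, all built from $\bGamma$, hence from $\calD$ and $\tcalD$, i.e.\ from the Jacobians $\bD_t$ and $\tbD_t$. Computing $\nabla_{\bX}(\bTheta_1)_{st}$ therefore requires differentiating $\bD_t$ and $\tbD_t$ in $\bX$, which amounts to second derivatives of $\psi$ and $\bphi_t$. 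Under \Cref{assu:rho-1} these need not exist: for the Huber loss $\psi'$ is a step function, and for soft-thresholding $\tbD_t$ is a $\{0,1\}$-valued diagonal matrix, so the Poincar\'e gradient is not even well defined on the paper's flagship examples. The same obstruction appears if you instead apply Poincar\'e to the zero-mean discrepancy $\bU^\top\bX\bV-\sum_{ij}\partial_{ij}(\bU^\top\be_i\be_j^\top\bV)$, since the subtracted divergence already contains $\partial_{ij}\bU,\partial_{ij}\bV$ and its gradient again involves second derivatives. The paper circumvents this by invoking two second-order Stein inequalities imported from prior work --- \Cref{lem:steinX} for the second-moment bounds on $\bTheta_1,\bTheta_2,\bTheta_3,\bTheta_6$ and \Cref{lem:Chi2type} for the first-moment bounds on $\bTheta_4,\bTheta_5$ --- which bound the relevant moments of the discrepancy using only the first derivatives $\partial_{ij}\bU,\partial_{ij}\bV$ (then fed by \Cref{lem:HF-derivative-moment}). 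Without a tool of this kind, your decomposition cannot be closed under the stated assumptions; with it, the mean-versus-fluctuation split you describe becomes unnecessary, since the inequalities bound the full discrepancy directly.
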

We are able to prove the main theorems using
\Cref{lem:Theta-bound}.

\section{Proof of main results}

\subsection{Proof of \Cref{thm:using-Sigma}}
\label{proof-thm:using-Sigma}
It suffices to prove this theorem for the case $\bSigma = \bI_p$. When $\bSigma \ne \bI_p$, the result can be derived using a change of variables argument, as outlined in \Cref{sec:change-var}. 
By basic algebra, we have 
\begin{align*}
	&\bTheta_5 + \fnorm{\bE} (\bTheta_6 + \bTheta_6^\top)\\
	=~& n\bH^\top \bH - 
	(\bX\bH- \bF\bW^\top)^\top
	(\bX\bH- \bF\bW^\top) + \bE^\top (\bX\bH - \bF \bW^\top) + (\bX\bH - \bF \bW^\top)^\top \bE\\
	=~& n\bH^\top \bH + \bE^\top\bE 
	- (\bE - \bX\bH + \bF\bW^\top)^\top (\bE - \bX\bH + \bF\bW^\top)\\
	=~& n\bH^\top \bH + \bE^\top\bE 
	- (\bR +\bF\bW^\top)^\top (\bR+ \bF\bW^\top).
\end{align*}
Notice that $r_t = \norm{\hbb^t - \bb^*}^2 + \norm{\bep}^2/n$ is the $t$-th diagonal entry of 
$(\bH^\top \bH + \bE^\top \bE/n)$, and 
$\hat r_t$ is the $t$-th diagonal entry of $(\bR +\bF\bW^\top)^\top (\bR+ \bF\bW^\top)/n$.
Since $\fnorm{\bE} = \sqrt{T} \norm{\bep}$, using the previous display that
conditionally on $\bep$, we have
\begin{align*}
	\E\Bigl[\big|\hat r_t - r_t\big|
        \mid \bep \Bigr]
\le n^{-1}
\E \Bigl[ 
	\fnorm{\bTheta_5} + 2\fnorm{\bE} \fnorm{\bTheta_6} \mid \bep
	\Bigr]
= n^{-1}
\E 
\Bigl[ 
    \fnorm{\bTheta_5} +  2\sqrt T \|\bep\| \fnorm{\bTheta_6} \mid \bep
\Bigr].
\end{align*}
Using the moment bounds of $\bTheta_5$ and $\bTheta_6$ in \Cref{lem:Theta-bound}, we have
\begin{align*}
	\E\Bigl[{\big|\hat r_t - r_t\big|} \mid \bep\Bigr]
\le \frac{C(T, \gamma, \eta_{\max}, c_0,\delta)}{\sqrt n}
\Bigl(1+\frac{\|\bep\|}{\sqrt n}\Bigr).
\end{align*}
Furthermore, if $\E[|\veps_i|]$ is finite, we have by \cite{large-deviation} that 
$\norm{\bep}/n \to^P 0$ (convergence in probability) if $\bep$ has \iid entries with a fixed
distribution independent of $n$.
Under this assumption, the right-hand side of the previous display
converges to 0 in probability.
By enlarging the constant if necessary, assume
$C(T, \gamma, \eta_{\max}, c_0,\delta)\ge 1$.
To obtain a quantitative bound,
by the conditional version of Markov's inequality, for any $\epsilon > 0$,
and almost surely with respect to $\bep$ that
\begin{align*}
\P\Bigl(|\hat r_t - r_t|>\epsilon \mid \bep\Bigr)
&\le 
\min\Bigl\{1,
\frac{C(T, \gamma, \eta_{\max}, c_0,\delta)}{\epsilon}
\Bigl(\frac{1}{\sqrt n} + \frac{\|\bep\|}{n}\Bigr)
\Bigr\}
\\&\le
\max\Bigl\{1,
\frac{C(T, \gamma, \eta_{\max}, c_0,\delta)}{\epsilon}
\Bigr\}
\min\Bigl\{1,
\frac{1}{\sqrt n} + \frac{\|\bep\|}{n}
\Bigr\}.
\end{align*}
Taking expectation with respect to $\bep$, we obtain
\begin{align*}
\P\Bigl(|\hat r_t - r_t|>\epsilon \Bigr)
&\le
\max\Bigl\{1,
\frac{C(T, \gamma, \eta_{\max}, c_0,\delta)}{\epsilon}
\Bigr\}
\E\Bigl[\min\Bigl\{1,
\frac{1}{\sqrt n} + \frac{\|\bep\|}{n}
\Bigr\}\Bigr] 
\\&\le
\max\Bigl\{1,
\frac{C(T, \gamma, \eta_{\max}, c_0,\delta)}{\epsilon}
\Bigr\}
\Bigl(\frac{1}{\sqrt n}
    +
\E\Bigl[\min\Bigl\{1,
\frac{\|\bep\|}{n}
\Bigr\}\Bigr] 
\Bigr)
\end{align*}
with
$
\E[\min\{1,
\frac{\|\bep\|}{n}
\}]\to 0$ (equivalently, $\|\bep\|/n\to^P0$) if the entries of $\bep$ are \iid with a fixed distribution
independent of $n$ with $\E[|\varepsilon_i|]<+\infty$
by \cite{large-deviation}.
This finishes the proof of \Cref{thm:using-Sigma}.

\subsection{Operator norm bound on $\widehat K$}
We first recall the definition of $\hbK$ from \eqref{eq:K} in the main text:
\begin{equation}\label{eq:hbK}
\hbK
= 
\sum_{t=1}^T \trace(\bD_t) \be_t\be_t^\top 
- 
\sum_{i=1}^n (\bI_T \otimes \be_i^\top) \calD (\bI_T \otimes \bX) 
\bGamma
(\bI_T \otimes \bX^\top) \calS \calD (\bI_T \otimes \be_i).
\end{equation}

Define two events: 
\(
\Omega_1 = \{\bX\in \R^{n\times p}: \opnorm{\bX}/\sqrt{n} \le 2 + \sqrt{p/n} \}
\)
and 
\(
\Omega_2 = 	\{|\{i\in[n]: |\bep_i| \le M\}| \ge \frac{2n}{3}\},
\)
where $M$ is a large enough constant such that $\P(|\bep_i| > M)\le 1/6$.

\begin{lemma}\label{lem:hbK-opnorm}
	Under the same conditions of \Cref{thm:using-Sigma} with $\bSigma = \bI_p$, we have in the event $\Omega_* = \Omega_1 \cap \Omega_2$ that 
	\begin{align*}
		n \opnorm{\hbK^{-1}} \le C(T, \gamma, \eta_{\max}, c_0, \delta, \norm{\bb^*}) .
	\end{align*}
    Furthermore, $\Omega_*$ has probability at least $1-e^{-n/18} - e^{-n/2}$.
\end{lemma}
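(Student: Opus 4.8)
The plan is to separate the two claims: the probability lower bound on $\Omega_*$ follows from standard concentration, while the operator-norm bound rests on the observation that $\hbK$ in \eqref{eq:hbK} is lower triangular, so it suffices to bound its diagonal below by a multiple of $n$ and its strictly-lower part above by a multiple of $n$.

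First, for the probability bound I would handle $\Omega_1$ and $\Omega_2$ independently and union bound. Since we have reduced to $\bSigma=\bI_p$, $\bX$ has \iid $N(0,1)$ entries, so Gaussian concentration of the largest singular value gives $\P(\opnorm{\bX}>\sqrt n+\sqrt p+t)\le e^{-t^2/2}$; taking $t=\sqrt n$ and $\sqrt p\le\sqrt{\gamma n}$ yields $\P(\Omega_1^c)\le e^{-n/2}$. For $\Omega_2$, the variables $\mathbf 1\{|\varepsilon_i|\le M\}$ are \iid Bernoulli with mean $\ge 5/6$ by the choice of $M$, so Hoeffding's inequality gives $\P(\Omega_2^c)\le e^{-2(n/6)^2/n}=e^{-n/18}$, and the union bound gives $\P(\Omega_*)\ge 1-e^{-n/18}-e^{-n/2}$.

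Next I would establish the triangular structure. The matrix $\bGamma=\calM^{-1}\bL(\bLambda\otimes\bI_p)\tcalD$ is strictly lower block-triangular, because $\bL$ is the strictly-lower block shift, $\calM^{-1}$ is lower block-triangular, and $(\bLambda\otimes\bI_p)$ and $\tcalD$ are block diagonal. Consequently, writing $\bGamma_{st}$ for the $(s,t)$ block of $\bGamma$, the correction term in \eqref{eq:hbK} has $(s,t)$ entry $\trace(\bD_s\bX\bGamma_{st}\bX^\top\bS_t\bD_t)$, which vanishes for $s\le t$; hence $\hbK$ is lower triangular with diagonal entries exactly $\trace(\bD_t)$. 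I would then factor $\hbK=\bDelta(\bI_T-\bN)$ with $\bDelta=\diag(\trace(\bD_1),\dots,\trace(\bD_T))$ and $\bN=\bI_T-\bDelta^{-1}\hbK$ strictly lower triangular, hence nilpotent ($\bN^T=0$), giving $\opnorm{\hbK^{-1}}\le\bigl(\sum_{k=0}^{T-1}\opnorm{\bN}^k\bigr)\opnorm{\bDelta^{-1}}$.

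The crux is the diagonal lower bound $\min_t\trace(\bD_t)\gtrsim n$. On $\Omega_1$ the $1$-Lipschitz recursion with $\bphi_t(\boldzero)=\boldzero$ and $|\psi|\le\delta$ gives an a priori bound $\norm{\hbb^t-\bb^*}\le C(T,\gamma,\eta_{\max},c_0,\delta,\norm{\bb^*})$, so $\norm{\bX(\hbb^t-\bb^*)}^2\le\opnorm{\bX}^2\norm{\hbb^t-\bb^*}^2\le Cn$. By counting, at most $n/6$ indices have $|\bx_i^\top(\hbb^t-\bb^*)|>K$ once $K$ is a large enough constant; combined with the $\ge 2n/3$ indices from $\Omega_2$ with $|\varepsilon_i|\le M$, at least $n/2$ indices satisfy $|y_i-\bx_i^\top\hbb^t|=|\varepsilon_i-\bx_i^\top(\hbb^t-\bb^*)|\le M+K=:R$. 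Since $\psi'=\rho''$ is continuous and strictly positive (Assumption~\ref{assu:rho-2}, in force whenever $\hbK^{-1}$ is formed), $c:=\min_{|r|\le R}\psi'(r)>0$, so $\trace(\bD_t)=\sum_i\psi'(y_i-\bx_i^\top\hbb^t)\ge cn/2$ for every $t$, i.e. $\opnorm{\bDelta^{-1}}\le 2/(cn)$. For the off-diagonal part, on $\Omega_1$ one has $\opnorm{\bP_t}\le 1+\eta_{\max}\opnorm{\bX}^2/(c_0n)\le C$, so the blocks of $\calM^{-1}$ are bounded and, using the factor $\opnorm{\bLambda}\le\eta_{\max}/(c_0n)$, $\opnorm{\bGamma_{st}}\le C(T,\gamma,\eta_{\max},c_0)/n$. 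By cyclicity and positive semidefiniteness of $B:=\bX^\top\bS_t\bD_t\bD_s\bX$, $|\trace(\bD_s\bX\bGamma_{st}\bX^\top\bS_t\bD_t)|\le\opnorm{\bGamma_{st}}\,\|B\|_*=\opnorm{\bGamma_{st}}\trace(B)\le(C/n)\min(n,p)\opnorm{\bX}^2\le C\min(n,p)\le Cn$, so the strictly-lower part has operator norm at most $C(T,\gamma,\eta_{\max},c_0,\delta)n$ and $\opnorm{\bN}\le\opnorm{\bDelta^{-1}}\,Cn\le C'$. Plugging into the triangular bound gives $n\opnorm{\hbK^{-1}}\le n\bigl(\sum_{k=0}^{T-1}(C')^k\bigr)\,2/(cn)\le C(T,\gamma,\eta_{\max},c_0,\delta,\norm{\bb^*})$. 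The main obstacle is precisely the uniform-in-$t$ diagonal lower bound: this is exactly where the two events combine, with $\Omega_1$ controlling the iterates and the design and $\Omega_2$ controlling the noise so that a constant fraction of residuals stay in a compact set where $\rho''$ is bounded away from zero.
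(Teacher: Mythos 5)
Your proposal is correct and follows essentially the same route as the paper: establish $\P(\Omega_1^c)\le e^{-n/2}$ and $\P(\Omega_2^c)\le e^{-n/18}$ via Davidson--Szarek and Hoeffding, show $\hbK$ is lower triangular with diagonal $\trace(\bD_t)\gtrsim n$ by combining the bounded-noise indices from $\Omega_2$ with a Markov-type count of indices where $|\bx_i^\top(\hbb^t-\bb^*)|$ is bounded (invoking \Cref{assu:rho-2}, which the paper's own proof also uses here), bound the strictly lower part by $Cn$ via $\opnorm{\bGamma}\le C/n$, and invert through a terminating Neumann series. The only differences are cosmetic: you count bad indices per $t$ (getting $\ge n/2$ good indices) where the paper uses $\|\bx_i^\top\bH\|$ uniformly over $t$ (getting $\ge n/3$), and you bound the off-diagonal entries by trace duality rather than direct operator-norm estimates.
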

\begin{proof}[Proof of \Cref{lem:hbK-opnorm}]
Under \Cref{assu:regime,assu:X}, we know that $\P(\Omega_1) \ge 1 - e^{-n/2}$ from \cite{DavidsonS01}. 
In the event $\Omega_1$, we have by \Cref{lem:XH} that 
\begin{align*}
	\fnorm{\bX\bH}^2/n 
	\le C(T, \gamma, \eta_{\max}, c_0) (\delta^2 + \norm{\bb^*}^2):= C_*. 
\end{align*}
Markov's inequality further implies 
\begin{align*}
	|\{i \in [n]: \norm{\bx_i^\top\bH}^2 > 3C_*\}| \le n/3. 
\end{align*}
In other words, $|\{i \in [n]: \norm{\bx_i^\top\bH}^2 \le 3C_*\}| \ge \frac{2n}{3}.$ 
Recall that $M$ is such that $\P(|\bep_i| > M)\le 1/6$. 
By Hoeffding's inequality, we have
\begin{align*}
	\P\Bigl(\frac1n \sum_{i=1}^n 
	\boldone\{|\bep_i| >M\} 
	\ge \P(|\bep_i| > M) + a
	\Bigr)  
	\le e^{-2n a^2}.
\end{align*}
Taking $a = 1/6$, we have $\P\bigl(\frac1n \sum_{i=1}^n 
	\boldone\{|\bep_i| >M\} 
	\ge \P(|\bep_i| > M) + 1/6
	\bigr)  
	\le e^{-n/18}.$ Furthermore, 
using 
$|\{i\in[n]: |\bep_i| > M\}|
= \sum_{i=1}^n 
\boldone\{|\bep_i| > M\}$, we have 
\begin{align*}
	\Bigl\{ 
		|\{i\in[n]: |\bep_i| > M\}| \ge n/3 |
	\Bigr\}
	=& \Bigl\{ 
		\frac 1n \sum_{i=1}^n 
		\boldone\{|\bep_i| > M\}
		\ge 1/6 + 1/6
	\Bigr\}\\
	\subseteq & \Bigl\{ 
		\frac 1n \sum_{i=1}^n 
		\boldone\{|\bep_i| > M\}
		\ge \P(|\bep_i| > M) + 1/6
	\Bigr\}.
\end{align*}
Therefore, 
\begin{align*}
	\P\Bigl(
		|\{i\in[n]: |\bep_i| > M\}| \ge n/3
	\Bigr)  
	\le e^{-n/18}.
\end{align*}
Equivalently, we have $\P(
		|\{i\in[n]: |\bep_i| \le M\}| \ge \frac{2n}{3}
	)  
	\ge 1 - e^{-n/18}.$ That is, at least $\frac{2n}{3}$ of the entries of $\bep$ are bounded by $M$ with probability at least $1-e^{-n/18}$.

Recall that 
\(
\Omega_2 = 	\{|\{i\in[n]: |\bep_i| \le M\}| \ge \frac{2n}{3}\},
\)
then $\P(\Omega_2) \ge 1 - e^{-n/18}$. 
Hence, 
$\P(\Omega_1 \cap \Omega_2) \ge 1 - e^{-n/18} - e^{-n/2}$. 
In the event $\Omega_1 \cap \Omega_2$, the set 
\begin{align*}
	\hat I = \{i \in [n]: 
	|\bep_i| \le M, 
	\norm{\bx_i^\top\bH}^2 \le 3C_*\}
\end{align*}
has size at least $\frac{n}{3}$.
For any $i \in \hat I$ and $t\in[T]$, we have 
\begin{equation}
    \label{previous_2}
|y_i - \bx_i^\top \hbb^t|
= |\bep_i - \bx_i^\top \bH \be_t|
\le |\bep_i| + |\bx_i^\top \bH \be_t|
\le M + \sqrt{3C_*}. 
\end{equation}

By the definition of $\bD_t$, under \Cref{assu:rho-2}, we have 
\begin{align*}
	\trace(\bD_t) 
	&=
	\sum_{i=1}^n 
	\rho''(y_i - \bx_i^\top \hbb^t)\\
	&> 
	\sum_{i\in \hat I} 
	\rho''(y_i - \bx_i^\top \hbb^t)
        &&\text{since $\rho''\ge 0$ by convexity}
        \\ &\ge |\hat I| \min_{u: |u| \le M + \sqrt{3C_*}} \rho''(u)
           &&\text{due to \eqref{previous_2}}
        \\
	&\ge n/3 \min_{u: |u| \le M + \sqrt{3C_*}} \rho''(u)
	:= c_* n
            &&\text{since $\hat I$ has size at least $n/3$}.
\end{align*}
Here $c_*$ is a constant depending on $\rho,M,C_*$ only.

By the definition of $\hbK$ in \eqref{eq:hbK}, $\hbK/n$ is a lower triangular matrix with diagonal entries equal to $\trace(\bD_t)/n$. It is invertible if and only if all its diagonal entries are non-zero. 
Therefore, in the event $\Omega_1 \cap \Omega_2$, we have $\hbK/n$ is invertible. 

Let $\hbLambda = \sum_{t=1}^T \trace(\bD_t) \be_t\be_t^\top
$. Then it is diagonal, $\opnorm{\hbLambda^{-1}}=\max_{t\in[T]}\trace[\bD_t]^{-1}\le (c_* n)^{-1}$ and 
\begin{align}\label{eq:hbK-hbLambda}
	\hbK = \hbLambda - \hbL
\end{align}
where $\hbL = \sum_{i=1}^n (\bI_T \otimes \be_i^\top) \calD (\bI_T \otimes \bX) 
\bGamma
(\bI_T \otimes \bX^\top) \calS \calD (\bI_T \otimes \be_i).$
Here $\hbL$ is a strictly lower triangular matrix. 
Using the upper bound of $\opnorm{\bGamma}$ in \Cref{lem:Gamma-opnorm}, we have
$\opnorm{\hbL} \le n C(T, \gamma, \eta_{\max}, c_0)$ in the event $\Omega_1$.  
Now we rewrite $\hbK^{-1}$ as 
\begin{align*}
	\hbK^{-1} 
	= (\hbK \hbLambda^{-1} \hbLambda)^{-1}
	= \hbLambda^{-1} (\hbK \hbLambda^{-1})^{-1}
	= \hbLambda^{-1} (\bI_T - \hbL \hbLambda^{-1})^{-1}.
\end{align*}
Notice that $\hbL \hbLambda^{-1}\in \R^{T\times T}$ is a strictly lower triangular matrix, thus
\begin{align*}
	(\bI_T - \hbL \hbLambda^{-1})^{-1}
	= \sum_{k=0}^\infty (\hbL \hbLambda^{-1})^k
	= \sum_{k=0}^{T-1} (\hbL \hbLambda^{-1})^k.
\end{align*}
By the triangle inequality,
\begin{align*}
	\opnorm{(\bI_T - \hbL \hbLambda^{-1})^{-1}}
	\le \sum_{k=0}^{T-1} \opnorm{\hbL \hbLambda^{-1}}^k
	\le C(T, \gamma, \eta_{\max}, c_0, \delta, \norm{\bb^*}). 
\end{align*}
Therefore, in the event $\Omega_1 \cap \Omega_2$ which has probability $\P(\Omega_1 \cap \Omega_2) \ge 1-e^{-n/18} - e^{-n/2}$, 
\begin{align*}
	\opnorm{\hbK^{-1}} 
	\le \opnorm{\hbLambda^{-1}} \opnorm{(\bI_T - \hbL \hbLambda^{-1})^{-1}}
	\le n^{-1} C(T, \gamma, \eta_{\max}, c_0, \delta, \norm{\bb^*}).
\end{align*}
\end{proof}

\begin{lemma}\label{lem:A-K-W-opnorm}
	Under the same conditions of \Cref{thm:unknwon-Sigma} with $\bSigma=\bI_p$, we have
	\begin{align*}
		\opnorm{\hbK} \le n (1 + \opnorm{\bX}^2 \opnorm{\bGamma}),
                \qquad
                \opnorm{\hbA} \le n \opnorm{\bX}^2 \opnorm{\bGamma},
                \qquad
		\opnorm{\bW} \le n \opnorm{\bGamma}. 
	\end{align*}
\end{lemma}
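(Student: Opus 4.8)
The plan is to derive all three bounds from a single mechanical principle: each of $\hbA$, $\hbK$, $\bW$ is written in \eqref{eq:hbA}, \eqref{eq:hbK}, \eqref{eq:bW} as a sum of ``sandwiched'' products of the form $\bM\,\bGamma\,\bN$, so I would bound each summand by submultiplicativity, $\opnorm{\bM\bGamma\bN}\le\opnorm{\bM}\,\opnorm{\bGamma}\,\opnorm{\bN}$, and then sum over the index ($i\in[n]$ for $\hbA,\hbK$ and $j\in[p]$ for $\bW$) with the triangle inequality. The whole argument therefore reduces to recording operator-norm estimates for the Kronecker selection matrices and the block-diagonal Jacobians that surround $\bGamma$.

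First I would collect the elementary norm facts. The Kronecker selectors $\bI_T\otimes\be_i$ (with $\be_i\in\R^n$) and $\bI_T\otimes\be_j$ (with $\be_j\in\R^p$) are partial isometries, since $(\bI_T\otimes\be_i)^\top(\bI_T\otimes\be_i)=\bI_T$, hence each has operator norm $1$; likewise $\opnorm{\bI_T\otimes\bX}=\opnorm{\bX}$. The block-diagonal matrices satisfy $\opnorm{\calD}=\max_t\opnorm{\bD_t}\le1$, because under \Cref{assu:rho-1} (convex $\rho$ with $1$-Lipschitz $\psi$) each diagonal $\bD_t=\diag(\psi'(\cdot))$ has entries in $[0,1]$, and $\opnorm{\calS}=\max_t\opnorm{\bS_t}\le1$ since each $\bS_t$ is a $\{0,1\}$ diagonal matrix; consequently $\opnorm{\calS\calD}\le1$. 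Finally $\trace(\bD_t)=\sum_i\psi'(\cdot)\le n$.

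With these in hand the three bounds assemble directly. For $\hbA$ in \eqref{eq:hbA}, the left factor $(\bI_T\otimes\be_i^\top)\calD(\bI_T\otimes\bX)$ has norm $\le\opnorm{\bX}$ and the right factor $(\bI_T\otimes\bX^\top)(\bI_T\otimes\be_i)=\bI_T\otimes(\bX^\top\be_i)$ also has norm $\le\opnorm{\bX}$, so each of the $n$ summands is $\le\opnorm{\bX}^2\opnorm{\bGamma}$ and $\opnorm{\hbA}\le n\opnorm{\bX}^2\opnorm{\bGamma}$. For $\hbK$ in \eqref{eq:hbK} the diagonal term $\sum_t\trace(\bD_t)\be_t\be_t^\top$ contributes operator norm $\max_t\trace(\bD_t)\le n$, while the second term has exactly the shape of $\hbA$ but with an extra $\calS\calD$ (norm $\le1$) inserted on the right, so it is again $\le n\opnorm{\bX}^2\opnorm{\bGamma}$; the triangle inequality then gives $\opnorm{\hbK}\le n(1+\opnorm{\bX}^2\opnorm{\bGamma})$. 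For $\bW$ in \eqref{eq:bW}, each summand $(\bI_T\otimes\be_j^\top)\bGamma(\bI_T\otimes\be_j)$ is sandwiched by partial isometries and hence has norm $\le\opnorm{\bGamma}$, so summing the $p$ terms yields $\opnorm{\bW}\le p\,\opnorm{\bGamma}$.

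The only genuine subtlety is the count in the $\bW$ bound: its defining sum runs over the $p$ feature coordinates, so the natural estimate is $p\,\opnorm{\bGamma}$ (indeed $\bW$ is the partial trace $\bW_{st}=\trace(\bGamma_{st})$, and taking $|\trace|$ of a $p\times p$ block costs a factor $p$). To land on the displayed $n\,\opnorm{\bGamma}$ I would invoke $p\le n$, or, in the proportional regime of \Cref{assu:regime} where $p\le\gamma n$, absorb the aspect-ratio constant $\gamma$ into the constants of the subsequent lemmas. I do not expect any deep obstacle here: the care is entirely in the Kronecker bookkeeping---correctly matching which factor produces an $\opnorm{\bX}$ versus a norm-one partial isometry, and tracking which of $\calD,\calS$ appear---rather than in any nontrivial inequality.
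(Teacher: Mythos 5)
Your proof is correct and follows essentially the same route as the paper's own (which is just a terse version of the same argument: write $\hbK = \hbLambda - \hbL$, then apply the triangle inequality and submultiplicativity with $\opnorm{\calD}\le 1$, $\opnorm{\calS}\le 1$, and the unit-norm Kronecker selectors). Your caveat about the $\bW$ bound is also well taken: the natural estimate is $p\,\opnorm{\bGamma}\le \gamma n\,\opnorm{\bGamma}$, so the displayed $n\,\opnorm{\bGamma}$ is literally correct only when $p\le n$; the discrepancy is harmless because the downstream bounds carry constants depending on $\gamma$ anyway.
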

\begin{proof}[Proof of \Cref{lem:A-K-W-opnorm}]
	By the definition of $\hbK$ in \eqref{eq:hbK}, 
	using $\opnorm{\calD}\le 1$ and $\opnorm{\calS}\le1$, 
	we have 
	\begin{align*}
		\opnorm{\hbK} 
		\le \opnorm{\hbLambda} + \opnorm{\hbL} 
		\le n (1 + \opnorm{\bX}^2 \opnorm{\bGamma}). 
	\end{align*}
	Similarly, by the definition of $\hbA$ in \eqref{eq:hbA}, we have
	\begin{align*}
		\opnorm{\hbA} \le n \opnorm{\bX}^2 \opnorm{\bGamma}.
	\end{align*}
	Last, by the definition of $\bW$ in \eqref{eq:bW}, we have
        $\opnorm{\bW} \le n \opnorm{\bGamma}.$
\end{proof}

\begin{lemma}\label{lem:W-W}
	Under the same conditions of \Cref{thm:using-Sigma} with $\bSigma = \bI_p$, we have 
	\begin{align*}
		&\E [\fnorm{\bF^\top \bF (\hbA - \hbK\bW)^\top}
                \mid \bep
                ]
		\le n^{3/2} C(T, \gamma, \eta_{\max}, c_0) (\delta^2 + \norm{\bb^*}^2),\\
		&\E [\fnorm{\bR^\top \bF (\hbA - \hbK\bW)^\top}
                \mid \bep]
		\le n^{2} (\tfrac{\norm{\bep}}{n} + \tfrac{1}{\sqrt{n}}) C(T, \gamma, \eta_{\max}, c_0) (\delta^2 + \norm{\bb^*}^2).
	\end{align*}
\end{lemma}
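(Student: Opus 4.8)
The plan is to reduce each of the two Frobenius-norm bounds to an exact algebraic identity that rewrites the target matrix in terms of the remainder matrices $\bTheta_1,\dots,\bTheta_6$ of \eqref{eq:Theta}, whose conditional moments are controlled in \Cref{lem:Theta-bound}, together with the operator-norm bounds on $\hbK,\hbA,\bW$ from \Cref{lem:A-K-W-opnorm}. Once such identities are available, the bounds follow from the triangle inequality, the conditional Cauchy--Schwarz/Jensen inequality, and a truncation to the event $\Omega_1=\{\opnorm{\bX}\le(2+\sqrt{p/n})\sqrt n\}$ of \Cref{lem:hbK-opnorm}, on which $\opnorm{\bGamma}\le C/n$ (since $\bLambda$ contributes a factor $1/n$) and hence $\opnorm{\hbK},\opnorm{\hbA},\opnorm{\tbK}=O(n)$ while $\opnorm{\bW}=O(1)$; the complement $\Omega_1^c$ has probability $e^{-n/2}$ and contributes negligibly after crude polynomial moment bounds on $\opnorm{\bX}$. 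Throughout, the factor $\delta^2+\norm{\bb^*}^2$ is carried along and combined across products (at worst as its square root) and absorbed into the constant.

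For the first bound I would establish the identity
\[
\bF^\top\bF(\hbA-\hbK\bW)^\top = n\bTheta_2 + n\bTheta_4 + \bTheta_1\hbK^\top .
\]
This comes from expanding $n\bTheta_4=p\bF^\top\tbF-(\tbK\bH^\top+\bF^\top\bX)(\bH\hbK^\top+\bX^\top\tbF)$, substituting $\bF^\top\bX\bH=\bTheta_1-\tbK\bH^\top\bH+\bF^\top\bF\bW^\top$ into its cross term, and adding $n\bTheta_2$: the terms $\bF^\top\bX\bX^\top\tbF$, $\tbK\bH^\top\bX^\top\tbF$ and $p\bF^\top\tbF$ cancel, leaving $\bF^\top\bF\hbA^\top-\bTheta_1\hbK^\top-\bF^\top\bF\bW^\top\hbK^\top$, so that adding $\bTheta_1\hbK^\top$ yields $\bF^\top\bF(\hbA-\hbK\bW)^\top$. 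Taking $\fnorm{\cdot}$ and using $\E[\fnorm{\bTheta_2}^2\mid\bep]\le nC$, $\E[\fnorm{\bTheta_4}\mid\bep]\le n^{1/2}C$, $\E[\fnorm{\bTheta_1}^2\mid\bep]\le nC$ and $\E[\opnorm{\hbK}^2\mid\bep]\le n^2C$ places each of the three terms at order $n^{3/2}$, which is the claim.

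For the second bound I would split $\bR=\bE-\bX\bH$. The noise part $\bE^\top\bF(\hbA-\hbK\bW)^\top$ is where the factor $\norm{\bep}$ enters: using $\bE^\top\bF\bW^\top=\bE^\top\bX\bH-\fnorm{\bE}\,\bTheta_6$ from the definition of $\bTheta_6$, together with $\fnorm{\bE}=\sqrt T\,\norm{\bep}$, $\E[\fnorm{\bTheta_6}^2\mid\bep]\le C$ and $\opnorm{\hbK}=O(n)$, produces the $n\norm{\bep}$ contribution. The signal part $\bH^\top\bX^\top\bF(\hbA-\hbK\bW)^\top$ must be shown to be $O(n^{3/2})$. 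One is tempted to substitute $\bH^\top\bX^\top\bF=\bTheta_1^\top-\bH^\top\bH\tbK^\top+\bW\bF^\top\bF$: the $\bTheta_1^\top(\hbA-\hbK\bW)^\top$ piece is $O(n^{3/2})$ and the $\bW\bF^\top\bF(\hbA-\hbK\bW)^\top$ piece equals $\bW$ times the right-hand side of the first identity, hence also $O(n^{3/2})$, but the leftover term $\bH^\top\bH\tbK^\top(\hbA-\hbK\bW)^\top$ is only $O(n^2)$ under crude operator-norm bounds.

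This leftover term is the main obstacle. It cannot be absorbed into the previous identity: every attempt to reduce $\bH^\top\bH\tbK^\top(\hbA-\hbK\bW)^\top$ to the controlled quantity $\bF^\top\bF(\hbA-\hbK\bW)^\top$ through $\bTheta_1$ closes a circle (reducing to $0=0$), and one cannot divide out $\bF^\top\bF$ because it may be singular (for instance when a vanishing step size forces $\hbb^{t+1}=\hbb^t$, repeating a column of $\bF$). I therefore expect the resolution to require a dedicated divergence identity for $\bR^\top\bF(\hbA-\hbK\bW)^\top$, obtained by re-running the Gaussian integration-by-parts computation of \Cref{def:identity-sum-pdv} and \Cref{lem:dF-dx,lem:dtF-dx} with $\bR^\top$ (equivalently the pair $\bE^\top$ and $\bH^\top\bX^\top$) in place of one $\bF^\top$ factor, so that the would-be $O(n^2)$ term is replaced by a genuinely small remainder of order $n^{3/2}$, with the noise interaction isolated as an $\bE$-dependent term of order $n\norm{\bep}$. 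Controlling that new remainder — rather than the algebra behind the first bound — is where the real work lies.
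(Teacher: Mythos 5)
Your first bound is exactly the paper's proof: the identity $\bF^\top\bF(\hbA-\hbK\bW)^\top=\bTheta_1\hbK^\top+n\bTheta_2+n\bTheta_4$ is the one the paper uses (equation \eqref{previous_1} there), and the conclusion follows from \Cref{lem:Theta-bound} together with $\opnorm{\hbK}=O(n)$ via \Cref{lem:A-K-W-opnorm,lem:Gamma-opnorm}, just as you say.

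For the second bound, however, your proposal stops at a declared gap, and that gap is real: the statement is not proved. You correctly diagnose that substituting $\bH^\top\bX^\top\bF=\bTheta_1^\top-\bH^\top\bH\tbK^\top+\bW\bF^\top\bF$ leaves an uncontrolled $O(n^2)$ term, and you correctly anticipate the remedy — this is precisely what the paper does. It introduces $\cbTheta_1,\cbTheta_2,\cbTheta_4$, the analogues of $\bTheta_1,\bTheta_2,\bTheta_4$ with $\bR^\top$ in place of $\bF^\top$ and a modified kernel $\cbK$ in place of $\tbK$, each normalized by $\fnorm{\bE}/\sqrt n+1$ so that their conditional moments remain of the same order as those of the unchecked versions; the same algebra as in the first bound then gives $n^{-1}\bR^\top\bF(\hbA-\hbK\bW)^\top=(\fnorm{\bE}/\sqrt n+1)\bigl[n^{-1}\cbTheta_1\hbK^\top+\cbTheta_2+\cbTheta_4\bigr]$, and the prefactor $\fnorm{\bE}/\sqrt n+1$ is exactly what produces the $\norm{\bep}/n+1/\sqrt n$ in the claim. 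What is missing from your proposal is the execution of the moment bounds for $\cbTheta_1,\cbTheta_2,\cbTheta_4$ (the analogue of \Cref{lem:Theta-bound}), which you defer as ``where the real work lies.'' Two further cautions: your suggested treatment of the noise part via $\bTheta_6$ alone does not close either, since crude bounds on $\bE^\top\bX\bH\hbK^\top$ and $\bE^\top\bF\hbA^\top$ give $n^{3/2}\norm{\bep}$ rather than the required $n\norm{\bep}$, so the noise piece genuinely needs the same divergence-identity treatment as the signal piece; and note the paper handles $\bR$ as a whole rather than splitting $\bR=\bE-\bX\bH$, which is why the single normalization $\fnorm{\bE}/\sqrt n+1$ suffices. (In fairness, the paper itself asserts the $\cbTheta_k$ bounds only ``using similar argument'' to \Cref{lem:Theta-bound}, so the step you omit is also the step the paper compresses most.)
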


\begin{proof}[Proof of \Cref{lem:W-W}]
First, using the definitions of $\bTheta_1,\bTheta_2,\bTheta_4$ in \eqref{eq:Theta}, we have 
\begin{align}
n^{-1} \bF^\top \bF (\hbA - \hbK\bW)^\top
=n^{-1} \bTheta_1 \hbK^\top + \bTheta_2 + \bTheta_4.
\label{previous_1}
\end{align}
Hence,
\begin{align*}
	&\E[\fnorm{\bF^\top \bF(\hbA - \hbK\bW)^\top}\mid \bep]\\
	=~& \E [\fnorm{(\bTheta_1 \hbK^\top + n\bTheta_2 + n\bTheta_4)}\mid \bep]
          &&\mbox{by \eqref{previous_1}}
        \\
	\le~& \E [\fnorm{\bTheta_1}^2 \mid \bep]^{1/2} 
	\E[\opnorm{\hbK}^2 \mid \bep]^{1/2} 
        + n \E [\fnorm{\bTheta_2} + \fnorm{\bTheta_4}\mid \bep]
            &&\mbox{by the Cauchy-Schwarz inequality}
        \\
	\le~& n^{3/2} C(T, \gamma, \eta_{\max}, c_0) (\delta^2 + \norm{\bb^*}^2).
\end{align*}
Here the last line uses the upper bounds of $\E [\fnorm{\bTheta_k}\mid \bep]$ from \Cref{lem:Theta-bound}, and the bound of $\E[\opnorm{\hbK}\mid \bep]$ follows from  
\Cref{lem:A-K-W-opnorm} and the bound of $\opnorm{\bGamma}$ from \Cref{lem:Gamma-opnorm}.
This proves the first inequality.

For the second inequality,
we define
\begin{align*}
	&\cbTheta_1
	= \frac{\bR^\top\bX\bH + \cbK \bH^\top \bH - \bR^\top \bF \bW^\top}{\fnorm{\bE}/\sqrt{n} + 1},\\
	&\cbTheta_2
	= 
	\frac{\bR^\top\bX\bX^\top\tbF + \cbK\bH^\top \bX^\top \tbF - p\bR^\top\tbF 
	+ \bR^\top\bF\hbA^\top}{n (\fnorm{\bE}/\sqrt{n} + 1)},\\
	&\cbTheta_4
	= \frac{p \bR^\top \tbF -
	(\cbK\bH^\top + \bR^\top \bX)
	(\hbK\bH^\top + \tbF^\top \bX)^\top}{n (\fnorm{\bE}/\sqrt{n} + 1)},
\end{align*}
where 
$\cbK = n\bI_T - \sum_{i=1}^n (\bI_T \otimes (\be_i^\top \bX)) \bGamma (\bI_T \otimes \bX^\top) \calS \calD (\bI_T \otimes (\bX^\top \be_i))$. 
Using similar argument that proves \Cref{lem:Theta-bound}, we can obtain the following bound of $\cbTheta_1, \cbTheta_2, \cbTheta_4$.
\begin{align}
	\max_{k\in\{1,2\}} 
	\E [\fnorm{\cbTheta_k}^2 \mid \bep]  &\le n C(T, \gamma, \eta_{\max}, c_0) (\delta^2 + \norm{\bb^*}^2),\\
	\E [\fnorm{\cbTheta_4} \mid \bep]  &\le n^{1/2} C(T, \gamma, \eta_{\max}, c_0) (\delta^2 + \norm{\bb^*}^2). 
\end{align}

By the definitions of $\cbTheta_1, \cbTheta_2, \cbTheta_4$, we have
\begin{align*}
	(\fnorm{\bE}/\sqrt{n} + 1)[n^{-1}\cbTheta_1 \hbK^\top + \cbTheta_2 + \cbTheta_4]
	= n^{-1}\bR^\top \bF (\hbA - \hbK\bW)^\top.
\end{align*}
Therefore, conditional on $\bep$, we have
\begin{align*}
	\E [\fnorm{\bR^\top \bF (\hbA - \hbK\bW)^\top} \mid \bep]
        &= (\fnorm{\bE}/\sqrt{n} + 1) \E [\fnorm{\cbTheta_1 \hbK^\top + n\cbTheta_2 + n\cbTheta_4} \mid \bep]\\
        &\le n^{3/2} (\fnorm{\bE}/\sqrt{n} + 1) 
	C(T, \gamma, \eta_{\max}, c_0) (\delta^2 + \norm{\bb^*}^2).
\end{align*}
This finishes the proof of \Cref{lem:W-W}.
\end{proof}

\subsection{Proof of \Cref{thm:unknwon-Sigma}}
\label{proof-thm:unknwon-Sigma}
In the event $\Omega_*=\Omega_1 \cap\Omega_2$, we know from \Cref{lem:hbK-opnorm} that $\hbK$ is invertible and $\opnorm{\hbK^{-1}} \le n^{-1} C$. 
Define $\tbW = \hbK^{-1} \hbA$. 
Using $\bR + \bF\tbW^\top = \bR + \bF \bW^\top + \bF(\tbW-\bW)^\top$, we have 
\begin{align*}
	&(\bR + \bF\tbW^\top)^\top (\bR + \bF\tbW^\top) - 
	(\bR + \bF\bW^\top)^\top (\bR + \bF\bW^\top)\\
	=~& (\bR + \bF \bW^\top)^\top \bF(\tbW-\bW)^\top 
	+ (\tbW-\bW)\bF^\top (\bR + \bF \bW^\top) + (\tbW-\bW)\bF^\top \bF (\tbW-\bW)^\top.
\end{align*}
We have by the triangle inequality
\begin{align*}
	&~\fnorm{(\bR + \bF\tbW^\top)^\top (\bR + \bF\tbW^\top) - 
	(\bR + \bF\bW^\top)^\top (\bR + \bF\bW^\top)}\\
	\le&~ 2 \fnorm{(\bR + \bF \bW^\top)^\top \bF(\tbW-\bW)^\top}
	+ \fnorm{(\tbW-\bW)\bF^\top \bF (\tbW-\bW)}\\ 
	\lesssim&~  \fnorm{\bR^\top \bF(\tbW-\bW)^\top} + \fnorm{\bW \bF^\top \bF(\tbW-\bW)^\top} 
	+ \fnorm{(\tbW-\bW)\bF^\top \bF (\tbW-\bW)}.
\end{align*} 
Recall that in the event $\Omega_*$, we have $\opnorm{\hbK^{-1}} \le n^{-1} C$. 
Using $ \hbA - \hbK \bW = \hbK (\tbW - \bW)$ and 
\Cref{lem:W-W}, we have
\begin{align*}
	&\E \Bigl[I(\Omega_*)
	\fnorm{(\bR + \bF\tbW^\top)^\top (\bR + \bF\tbW^\top) - 
	(\bR + \bF\bW^\top)^\top (\bR + \bF\bW^\top)} \bigm| \bep \Bigr]\\
	\lesssim~&  \E \Bigl[I(\Omega_*)\fnorm{\bR^\top \bF(\tbW-\bW)^\top} \bigm| \bep \Bigr]\\
	&+ \E\Bigl[I(\Omega_*)\fnorm{\bW \bF^\top \bF(\tbW-\bW)^\top} \bigm| \bep \Bigr]\\
	&+ \E\Bigl[I(\Omega_*)\fnorm{(\tbW-\bW)\bF^\top \bF (\tbW-\bW)^\top} \bigm| \bep \Bigr]\\
	=~&  \E \Bigl[I(\Omega_*)\fnorm{\bR^\top \bF(\hbA - \hbK \bW)^\top (\hbK^\top)^{-1}} \bigm| \bep \Bigr]\\
	&+ \E\Bigl[I(\Omega_*)\fnorm{\bW \bF^\top \bF(\hbA - \hbK \bW)^\top (\hbK^\top)^{-1}} \bigm| \bep \Bigr]\\
	&+ \E\Bigl[I(\Omega_*)\fnorm{\hbK^{-1} (\hbA - \hbK \bW) \bF^\top \bF (\hbA - \hbK \bW)^\top (\hbK^\top)^{-1}} \bigm| \bep \Bigr].
\end{align*}
According to \Cref{lem:W-W} and the bound of $\opnorm{\hbK^{-1}}$ in \Cref{lem:hbK-opnorm}, the first conditional expectation is bounded from above by 
$$n (\tfrac{\norm{\bep}}{n} + \tfrac{1}{\sqrt{n}})
C(T, \gamma, \eta_{\max}, c_0) (\delta^2 + \norm{\bb^*}^2).$$

Using the bound of $\opnorm{\bK^{-1}}$ in \Cref{lem:hbK-opnorm}, 
the bound of $\opnorm{\bW}$ in \Cref{lem:A-K-W-opnorm}, and the bound of 
$\E [\fnorm{\bF^\top \bF (\hbA - \hbK\bW)^\top}\mid \bep]$ in \Cref{lem:W-W}, the second conditional expectation is bounded from above by 
$$
n^{1/2} C(T, \gamma, \eta_{\max}, c_0) (\delta^2 + \norm{\bb^*}^2).
$$

Similarly, the third conditional expectation is bounded from above by
\begin{align*}
	n^{1/2} C(T, \gamma, \eta_{\max}, c_0) (\delta^2 + \norm{\bb^*}^2).
\end{align*}
In summary, we have 
\begin{align*}
	&n^{-1}\E [I(\Omega_*)
	\fnorm{(\bR + \bF\tbW^\top)^\top (\bR + \bF\tbW^\top) - 
	(\bR + \bF\bW^\top)^\top (\bR + \bF\bW^\top)} \mid \bep]\\
	\le~& \tfrac{1}{\sqrt{n}} \bigl(\tfrac{\norm{\bep}}{\sqrt{n}} + 1 \bigr) C(T, \gamma, \eta_{\max}, c_0) (\delta^2 + \norm{\bb^*}^2).
\end{align*}
Since $\tilde r_t$ and $\hat r_t$ are the $t$-th diagonal entries of $(\bR + \bF\tbW^\top)^\top (\bR + \bF\tbW^\top)/n$ and $(\bR + \bF\bW^\top)^\top (\bR + \bF\bW^\top)/n$, respectively, we have
\begin{align*}
	&\E[I(\Omega_*)|\tilde r_t - \hat r_t| \mid \bep]\\
	\le~& \E\Bigl[I(\Omega_*)\fnorm{(\bR + \bF\tbW^\top)^\top (\bR + \bF\tbW^\top) - 
	(\bR + \bF\bW^\top)^\top (\bR + \bF\bW^\top)}\bigm| \bep \Bigr]\\
	\le~& \tfrac{1}{\sqrt{n}} \bigl(\tfrac{\norm{\bep}}{\sqrt{n}} + 1 \bigr)  C(T, \gamma, \eta_{\max}, c_0) (\delta^2 + \norm{\bb^*}^2).
\end{align*}
Using the same argument in the proof of \Cref{thm:using-Sigma}, we have for any $\epsilon > 0$, 
\begin{align*}
	\P\Bigl(
	I(\Omega_*) |\tilde r_t - \hat r_t| > \epsilon \mid \bep
	\Bigr)
        &\le \min\Bigl(1, \tfrac{C(T, \gamma, \eta_{\max}, c_0, \delta)}{\epsilon} (\tfrac{1}{\sqrt{n}} + \tfrac{\norm{\bep}}{n})\Bigr)\\
        &\le
        \max\{1,
            \tfrac{C(T, \gamma, \eta_{\max}, c_0, \delta)}{\epsilon}
        \}
        \min(1, \tfrac{1}{\sqrt{n}} + \tfrac{\norm{\bep}}{n}).
\end{align*}
Taking expectation with respect to $\bep$, we have
\begin{align*}
	&\P\Bigl(
	I(\Omega_*) |\tilde r_t - \hat r_t| > \epsilon
	\Bigr)
	\le \max\{1,
        \tfrac{C(T, \gamma, \eta_{\max}, c_0, \delta)}{\epsilon} 
        \}
        \E[\min(1, \tfrac{1}{\sqrt{n}} + \tfrac{\norm{\bep}}{n})].
\end{align*}
Using the union bound and $\P(\Omega_*)\ge 1-e^{-n/18}-e^{-n/2}\ge 1- 2 e^{-n/18}$, we obtain 
\begin{align*}
	&\P\Bigl(
	|\tilde r_t - \hat r_t| > \epsilon
	\Bigr)
	\le 2e^{-n/18} +
        \max\{1,
        \tfrac{C(T, \gamma, \eta_{\max}, c_0, \delta)}{\epsilon}
        \}
        \bigl[\tfrac{1}{\sqrt{n}} + \E [\min(1, \tfrac{\norm{\bep}}{n} )] \bigr].
\end{align*}
Using the triangle inequality and the tail probability of $|\hat r_t - r_t|$ in \Cref{thm:using-Sigma}, we have
\begin{align*}
	\P\Bigl(
	|\tilde r_t - r_t| > \epsilon
	\Bigr)
	\le 2e^{-n/18} +
        \max\{1,
        \tfrac{C(T, \gamma, \eta_{\max}, c_0, \delta)}{\epsilon}
        \}
        \bigl[\tfrac{1}{\sqrt{n}} + \E [\min(1, \tfrac{\norm{\bep}}{n} )] \bigr].
\end{align*}

\section{Proof of results in \Cref{sec:derivative}}

\subsection{Proof of \Cref{lem:dot-b}}
\label{proof-lem:dot-b}
By assumption, we know $\hbb^1$ is independent of $\bX$ and 
$\hbb^{t+1} 
= \bphi_{t} \bigl( \hbb^t + \tfrac{\eta_t}{|I_t|} \bX^\top \bS_t \bpsi(\by-\bX\hbb^t)
\bigr)$ from \eqref{eq:unified}. 
Recall that $\bD_t = \pdv{\bpsi(\bu)}{\bu}\big|_{\bu=\by-\bX\hbb^t}$ and $\tbD_t = \pdv{\bphi_t(\bv)}{\bv}\big|_{\bv = \hbb^t + \tfrac{\eta_t}{|I_t|} \bX^\top \bS_t\bpsi(\by-\bX\hbb^t)}$.
Let a dot denote the derivative with respect to $x_{ij}$. By product rule and chain rule and using 
$\by - \bX\hbb^t = \bep - \bX (\hbb^t-\bb^*)$, 
we have
\begin{align*}
	\dot \bb^{t+1} 
	&= \tbD_t \Bigl[\dot\bb^t + \frac{\eta_t}{|I_t|} 
	\Bigl(\dot \bX^\top \bS_t\bpsi(\by - \bX\hbb^t)
	- \bX^\top \bS_t \bD_t (\dot\bX (\hbb^t-\bb^*)+ 
	\bX\dot\bb^t)
	\Bigr)\Bigr] \\
	&= \tbD_t \Bigl[\dot\bb^t + \frac{\eta_t}{|I_t|} 
	\Bigl(\dot \bX^\top F_t
	- \bX^\top \bS_t \bD_t (\dot\bX H_t + 
	\bX\dot\bb^t)
	\Bigr) \Bigr],
\end{align*}
where the last line uses $F_t = \bS_t\bpsi(\by - \bX\hbb^t)$ and $H_t = \hbb^t-\bb^*$. 
Arranging terms gives
\begin{align*}
	- \tbD_t (\bI_p - \tfrac{\eta_t}{|I_t|} \bX^\top \bS_t \bD_t \bX)\dot\bb^t
	+ 
	\dot \bb^{t+1} 
	= \tfrac{\eta_t}{|I_t|} \tbD_t
	(\dot \bX^\top F_t - \bX^\top \bS_t \bD_t \dot\bX H_t).
\end{align*}
Let  $\bP_t = \tbD_t (\bI_p - \tfrac{\eta_t}{|I_t|} \bX^\top \bS_t \bD_t \bX)$ and 
$\ba_t= \tfrac{\eta_t}{|I_t|} \tbD_t
(\dot \bX^\top F_t - \bX^\top \bS_t \bD_t \dot\bX H_t)$, we can rewrite the above recursion of $\dot \bb^t$ as a linear system:
\begin{align*}
	\underbrace
	{\begin{bmatrix}
		\bI_p&&&&\\
		-\bP_1&\bI_p&&&\\
		&\ddots&\ddots&&\\
		&&-\bP_{T-1}&\bI_p&\\
	\end{bmatrix}}_{\calM}
	\begin{bmatrix}
		\dot\bb^1\\
		\dot\bb^2\\
		\vdots\\
		\dot\bb^T
	\end{bmatrix}
	= \underbrace{
	\begin{bmatrix}
		\boldzero&&&&\\
		\bI_p&\boldzero&&&\\
		&\ddots&\ddots&&\\
		&&\bI_p&\boldzero&\\
	\end{bmatrix}}_{\bL}
	\underbrace
	{\begin{bmatrix}
		\ba_1\\
		\ba_2\\
		\vdots\\
		\ba_T
	\end{bmatrix}}_{\ba}.
\end{align*}

Solving the above system, we have 
$\dot\bb^t
= (\be_t^\top \otimes \bI_p) \calM^{-1}\bL \ba.$
Since $\dot\bX = \pdv{\bX}{x_{ij}} = \be_i \be_j^\top$, $\ba_t$ can be further simplified as 
$$\ba_t= \tfrac{\eta_t}{|I_t|} \tbD_t
\bigl(\be_j\be_i^\top F_t -  \bX^\top \bS_t \bD_t \be_i\be_j^\top H_t
\bigr).$$
Using $\calD = \sum_{t=1}^T \bigl((\be_t\be_t^\top) \otimes \bD_t\bigr)$, $\tcalD = \sum_{t=1}^T \bigl((\be_t\be_t^\top) \otimes \tbD_t\bigr)$, $\calS = \sum_{t=1}^T \bigl((\be_t\be_t^\top) \otimes \bS_t\bigr)$, 
and 
$\bLambda = \sum_{t=1}^T \tfrac{\eta_t}{|I_t|}\be_t\be_t^\top$, 
we have 
\begin{align*}
	\ba 
	&= (\bLambda \otimes \bI_p) \tcalD  \bigl[\vec(\be_j\be_i^\top \bF) 
	- (\bI_T\otimes \bX^\top) \calS \calD \vec(\be_i\be_j^\top\bH)\bigr]\\
	&= (\bLambda \otimes \bI_p) \tcalD  
	\bigl[((\bF^\top\be_i) \otimes\be_j)
	- (\bI_T\otimes \bX^\top) \calS \calD 
	((\bH^\top\be_j)\otimes \be_i)\bigr].
\end{align*}
Plugging this expression for $\ba$ into $\dot\bb^t
= (\be_t^\top \otimes \bI_p) \calM^{-1}\bL \ba$ gives 
\begin{equation*}
	\pdv{\hbb^t}{x_{ij}}
	= (\be_t^\top \otimes \bI_p) \calM^{-1}\bL (\bLambda\otimes \bI_p) \tcalD  [((\bF^\top\be_i) \otimes\be_j)
	- (\bI_T\otimes \bX^\top) \calS \calD 
	((\bH^\top\be_j)\otimes \be_i)].
\end{equation*}
This finishes the proof of \Cref{lem:dot-b}.

\subsection{Proof of \Cref{lem:dF-dx}}
\label{proof-lem:dF-dx}
By definition, $F_{lt} = \be_l^\top \bS_t\bpsi(\by - \bX \hbb^t)$. By the chain rule of differentiation, we have
\begin{align*}
	\pdv{F_{lt}}{x_{ij}}
	= \be_l^\top \pdv{\bS_t\bpsi(\by - \bX \hbb^t)}{x_{ij}}
	= - \be_l^\top \bS_t \bD_t (\be_i \be_j^\top \bH \be_t + \bX \pdv{\hbb^t}{x_{ij}}). 
\end{align*}
Notice that $(\be_t\otimes (\bD_t\bS_t)) = \calD \calS (\be_t \otimes \bI_n)$. 
The desired formula then follows by plugging in the expression of $\pdv{\hbb^t}{x_{ij}}$ in \Cref{lem:dot-b}.

\subsection{Proof of \Cref{lem:dtF-dx}}
\label{proof-lem:dtF-dx}
The desired identity follows by 
\begin{align*}
	\pdv{\tF_{lt}}{x_{ij}}
	= \be_l^\top \pdv{\bpsi(\by - \bX \hbb^t)}{x_{ij}}
	= - \be_l^\top \bD_t (\be_i \be_j^\top \bH \be_t + \bX \pdv{\hbb^t}{x_{ij}})
\end{align*}
and the expression of $\pdv{\hbb^t}{x_{ij}}$ in \Cref{lem:dot-b}.

\subsection{
Alternative expressions for
the matrices defined in \Cref{def:identity-sum-pdv}}
\label{proof-def:identity-sum-pdv}

This section derives
alternative expressions for
the matrices $\bUpsilon_1,...,\bUpsilon_5$ defined in \Cref{def:identity-sum-pdv}.

We first study $\bUpsilon_1$ in \eqref{eq:1}. 
Using $\bF = \sum_{t=1}^T\bF\be_t\be_t^\top$, we have by \Cref{lem:dF-dx}
\begin{align}\label{eq:1-two-terms}
	\sum_{j=1}^n \pdv{\be_i^\top \bF}{x_{ij}}
	= \sum_{i=1}^n \sum_{t=1}^T \pdv{F_{it}}{x_{ij}}\be_t^\top 
	= \sum_{i=1}^n \sum_{t=1}^T D_{ij}^{it} \be_t^\top 
	+ \sum_{i=1}^n \sum_{t=1}^T \Delta_{ij}^{it}\be_t^\top.
\end{align}
Now we compute the two terms in the right-hand side of \eqref{eq:1-two-terms}. 
For the first term, using the expression of $D_{ij}^{lt}$ in \Cref{lem:dF-dx}, 
\begin{align*}
	&\sum_{i,t} D_{ij}^{it}\be_t^\top\\
	=& - \be_j^\top \bH \sum_{t=1}^T \trace(\bS_t \bD_t) \be_t \be_t^\top 
	+ \sum_{i,t}
	((\be_j^\top \bH)\otimes \be_i^\top) \calD \calS (\bI_T\otimes \bX)
	\bGamma^\top
	(\be_t \otimes (\bX^\top \bD_t \bS_t \be_i))
	\be_t^\top\\
	=& - \be_j^\top \bH \sum_{t=1}^T \trace(\bS_t \bD_t) \be_t \be_t^\top
	+ \be_j^\top \bH 
	\sum_{i=1}^n (\bI_T \otimes \be_i^\top) \calD \calS (\bI_T\otimes \bX) \bGamma^\top  (\bI_T\otimes \bX^\top)  \calD \calS (\bI_T \otimes \be_i)\\
	=& -\be_j^\top \bH \tbK^\top &&\mbox{by \eqref{eq:tbK}}.
\end{align*}
Next, we compute the second term in the right-hand side of \eqref{eq:1-two-terms}. Using the expression of $\Delta_{ij}^{lt}$ in \Cref{lem:dF-dx},
\begin{align*}
	\sum_{i,t} \Delta_{ij}^{it}\be_t^\top
        &= - \sum_{i,t} 
	((\be_i^\top\bF)\otimes \be_j^\top)
	\bGamma^\top 
	(\be_t\otimes (\bX^\top\bD_t \bS_t \be_i)) \be_t^\top\\
        &= -\be_j^\top 
	\underbrace{\sum_{i}  
	((\be_i^\top\bF)\otimes \bI_p)
	\bGamma^\top
	(\bI_T \otimes \bX^\top)\calD \calS (\bI_T \otimes \be_i)}_{\bUpsilon_1}.
\end{align*}
The identity \eqref{eq:1} then follows by substituting the above two expressions into \eqref{eq:1-two-terms}.

To study $\bUpsilon_2$ in \eqref{eq:2}, we use a similar procedure. 
Using the mixed property of Kronecker product and the fact that the transpose of a scalar remains the same, we have 
\begin{align*}
	\sum_{j=1}^p 
	\pdv{\be_j^\top \bH}{x_{ij}}
        = \sum_{j=1}^p\sum_{t=1}^T
	\pdv{\be_j^\top \hbb^t}{x_{ij}} \be_t^\top
        &= \sum_{j,t} \be_j^\top (\be_t^\top \otimes \bI_p) \bGamma ((\bF^\top\be_i) \otimes\be_j) \be_t^\top \\
	& \quad - \sum_{j,t} \be_j^\top  (\be_t^\top \otimes \bI_p) \bGamma 
	(\bI_T\otimes \bX^\top) \calS \calD 
	((\bH^\top\be_j)\otimes \be_i)\be_t^\top 
	&&\mbox{by \eqref{eq:db-dx}}\\
        &= \be_i^\top \bF 
	\sum_{j} (\bI_T \otimes \be_j^\top) \bGamma^\top (\bI_T \otimes \be_j) \\
        & \quad  - \sum_{j,t} 
	((\be_j^\top \bH)\otimes \be_i^\top)
	\calD \calS 
	(\bI_T\otimes \bX)
	\bGamma^\top 
	(\be_t \otimes \be_j) 
	\be_t^\top \\
        &= \be_i^\top \bF 
	\sum_{j} (\bI_T \otimes \be_j^\top) \bGamma^\top (\bI_T \otimes \be_j) \\
        & \quad - \be_i^\top 
	\sum_{j} 
	((\be_j^\top \bH)\otimes \bI_n)
	\calD \calS
	(\bI_T\otimes \bX)
	\bGamma^\top 
	(\bI_T \otimes \be_j)\\
        &= \be_i^\top \bF \bW^\top - \be_i^\top \bUpsilon_2,
\end{align*}
where $\bW = \sum_{j} (\bI_T \otimes \be_j^\top) \bGamma (\bI_T \otimes \be_j)$
and $\bUpsilon_2 = \sum_{j} 
((\be_j^\top \bH)\otimes \bI_n)
\calD \calS
(\bI_T\otimes \bX)
\bGamma^\top 
(\bI_T \otimes \be_j)$. 

To study $\bUpsilon_3$ in \eqref{eq:3}, we use the product rule of differentiation and \eqref{eq:1}-\eqref{eq:2}:
\begin{align*}
	\sum_{i=1}^n\sum_{j=1}^p \pdv{\bF^\top \be_i\be_j^\top \bH}{x_{ij}}
	=& \sum_{i=1}^n\sum_{j=1}^p \pdv{\bF^\top }{x_{ij}} \be_i\be_j^\top \bH + \bF^\top \sum_{i=1}^n\sum_{j=1}^p  \be_i \pdv{\be_j^\top \bH}{x_{ij}}\\
	=& -\sum_{j} (\tbK\bH^\top \be_j + \bUpsilon_1^\top \be_j) \be_j^\top \bH + \bF^\top \sum_{i} \be_i  (\be_i^\top \bF \bW^\top - \be_i^\top \bUpsilon_2)\\
	=& -\tbK\bH^\top\bH + \bF^\top \bF \bW^\top - \underbrace{(\bUpsilon_1^\top \bH + \bF^\top \bUpsilon_2)}_{\bUpsilon_3}.
\end{align*}

For $\bUpsilon_4$ in \eqref{eq:4}, by the product rule, 
\begin{align*}
	&\sum_{i=1}^n\sum_{j=1}^p \pdv{\bH^\top \bX^\top \be_i\be_j^\top \bH}{x_{ij}}\\
	=& \sum_{i,j} \pdv{\bH^\top }{x_{ij}} \bX^\top \be_i\be_j^\top \bH+ \bH^\top \sum_{i,j}  
	\be_j\be_i^\top
	\be_i\be_j^\top \bH + \bH^\top \bX^\top\sum_{i,j} \be_i\be_j^\top  \pdv{\bH}{x_{ij}}\\
	=& \sum_{i,j} \pdv{\bH^\top }{x_{ij}} \bX^\top \be_i\be_j^\top \bH+ n \bH^\top \bH + \bH^\top \bX^\top (\bF \bW^\top - \bUpsilon_2) &&\mbox{by \eqref{eq:2}}.
\end{align*}
We then compute the first term of the last line as follows
\begin{align*}
	&\sum_{i,j} \pdv{\bH^\top }{x_{ij}} \bX^\top \be_i\be_j^\top \bH\\
	=& \sum_{i,j,t} \be_t \be_t^\top \pdv{\bH^\top }{x_{ij}} \bX^\top \be_i\be_j^\top \bH\\ 
	=& \sum_{i,j,t} \be_t 
	\Bigl(\pdv{\hbb^t}{x_{ij}}\Bigr)^\top \bX^\top \be_i\be_j^\top \bH\\
	=& \sum_{i,j,t} \be_t 
	 \be_i^\top \bX \pdv{\hbb^t}{x_{ij}}
	\be_j^\top \bH\\
	=& - \sum_{i,j,t} \be_t \be_i^\top \bX
	(\be_t^\top \otimes \bI_p) \bGamma (\bI_T\otimes \bX^\top) \calS \calD 
	((\bH^\top\be_j)\otimes \be_i)
	\be_j^\top \bH + \tbUpsilon_1 &&\mbox{by \eqref{eq:db-dx}}\\
	=& - \sum_{i} 
	(\bI_T \otimes (\be_i^\top \bX)) \bGamma (\bI_T\otimes \bX^\top) \calS \calD 
	(\bI_T \otimes \be_i)
	\bH^\top \bH + \tbUpsilon_1\\
	=& - \tbA \bH^\top \bH + \tbUpsilon_1 &&\mbox{by \eqref{eq:tbA},}
\end{align*}
where 
\begin{align*}
	\tbUpsilon_1 
	= \sum_{i,j,t} \be_t \be_i^\top \bX (\be_t^\top \otimes \bI_p) \bGamma ((\bF^\top\be_i) \otimes\be_j)
	\be_j^\top \bH
	= \sum_{i} 
	(\bI_T \otimes \be_i^\top \bX ) \bGamma((\bF^\top\be_i) \otimes\bI_p)\bH. 
\end{align*}
Combining the above pieces, we have 
\begin{align*}
	\sum_{i=1}^n\sum_{j=1}^p \pdv{\bH^\top \bX^\top \be_i\be_j^\top \bH}{x_{ij}}
        &= - \tbA \bH^\top \bH + \tbUpsilon_1 + n \bH^\top \bH + \bH^\top \bX^\top (\bF \bW^\top - \bUpsilon_2)\\
        &= (n\bI_T- \tbA) \bH^\top \bH + \bH^\top \bX^\top \bF \bW^\top - \underbrace{
	(\bH^\top \bX^\top \bUpsilon_2 - \tbUpsilon_1)}_{\bUpsilon_4}. 
\end{align*}
This provides an alternative expression for $\bUpsilon_4$ in \eqref{eq:4}. 

Last, we study $\bUpsilon_5$ in \eqref{eq:5}. We have
\begin{align*} 
	\sum_{i=1}^n\sum_{j=1}^p \pdv{\bF^\top \be_i\be_j^\top\bX^\top \tbF}{x_{ij}}
        &=\sum_{i,j} 
	\Bigl[\pdv{\bF^\top \be_i}{x_{ij}} \be_j^\top\bX^\top \tbF
	+  \bF^\top \be_i\be_j^\top \be_j\be_i^\top \tbF 
	+ \bF^\top \be_i\be_j^\top\bX^\top \pdv{\tbF}{x_{ij}}\Bigr]\\
        &= -(\tbK\bH^\top + \bUpsilon_1^\top) \bX^\top \tbF
	+ p \bF^\top  \tbF 
	+ \sum_{i,j}
	\bF^\top \be_i\be_j^\top\bX^\top \pdv{\tbF}{x_{ij}}.
\end{align*}
It remains to compute the third term in the last display. 
Using the fact that $\tbF = \sum_{l=1}^n \sum_{t=1}^T \be_l\be_l^\top \tbF \be_t \be_t^\top$, we have by \Cref{lem:dtF-dx} that
\begin{align*}
	\pdv{\tbF}{x_{ij}}
	= \sum_{l,t} \be_l  \pdv{\be_l^\top \tbF \be_t}{x_{ij}}  \be_t^\top
	= \sum_{l,t} \be_l  (\tD_{ij}^{lt} + \tDelta_{ij}^{lt}) \be_t^\top.
\end{align*}
Using 
\begin{align*}
	\tD_{ij}^{lt} 
	&= -\be_l^\top \bD_t \be_i \be_j^\top \bH \be_t 
	+ 
	((\be_j^\top \bH)\otimes \be_i^\top)\calD \calS (\bI_T\otimes \bX)
	\bGamma^\top
	(\bI_T\otimes \bX^\top) \calD 
	(\be_t \otimes \be_l),\\
	\tDelta_{ij}^{lt} 
	&= -
	((\be_i^\top\bF)\otimes \be_j^\top)
	\bGamma^\top 
	(\bI_T\otimes \bX^\top) \calD 
	(\be_t \otimes \be_l),
\end{align*}
we find
\begin{align*}
	\sum_{i,j}
	\bF^\top \be_i\be_j^\top\bX^\top \pdv{\tbF}{x_{ij}}
	&=\sum_{i,j,l,t}
	\bF^\top \be_i\be_j^\top\bX^\top \be_l \pdv{\be_l^\top\tbF\be_t}{x_{ij}} \be_t^\top\\
	&=\sum_{i,j,l,t}
	\bF^\top \be_i\be_j^\top\bX^\top \be_l \tilde\Delta_{ij}^{lt} \be_t^\top
	+ 
	\sum_{i,j,l,t}
	\bF^\top \be_i\be_j^\top\bX^\top \be_l \tilde D_{ij}^{lt} \be_t^\top.
\end{align*}
We now compute the two terms in the above display. 
For the first term, we have 
\begin{align*}
	&\sum_{i,j,l,t}
	\bF^\top \be_i\be_j^\top\bX^\top \be_l \tilde\Delta_{ij}^{lt} \be_t^\top\\
	=& \sum_{i,j,l,t}
	\bF^\top \be_i\be_j^\top\bX^\top \be_l \bigl[-
	((\be_i^\top\bF)\otimes \be_j^\top)
	\bGamma^\top 
	(\bI_T\otimes \bX^\top) \calD 
	(\be_t \otimes \be_l)\bigr] \be_t^\top\\
	=& - \bF^\top \bF \sum_{l=1}^n
	(\bI_T \otimes (\be_l^\top \bX))
	\bGamma^\top 
	(\bI_T \otimes \bX^\top) \calD
	(\bI_T \otimes \be_l)\\
	=& - \bF^\top \bF \hbA^\top &&\mbox{by \eqref{eq:hbA}.}
\end{align*}
For the second term, we have 
\begin{align*}
	&\sum_{i,j,l,t}
	\bF^\top \be_i\be_j^\top\bX^\top \be_l \tilde D_{ij}^{lt} \be_t^\top\\
	=& \sum_{i,j,l,t}
	\bF^\top \be_i\be_j^\top\bX^\top \be_l [-\be_l^\top \bD_t \be_i \be_j^\top \bH \be_t 
	+ 
	((\be_j^\top \bH)\otimes \be_i^\top)\calD \calS (\bI_T\otimes \bX)
	\bGamma^\top 
	(\be_t \otimes (\bX^\top \bD_t \be_l))]\be_t^\top\\
	=& \underbrace{- \sum_{t=1}^T \bF^\top \bD_t \bX\bH\be_t 
	+
	\sum_{l=1}^n (\be_l^\top \bX \bH \otimes \bF^\top) \calD \calS (\bI_T\otimes \bX)
	\bGamma^\top
	(\bI_T\otimes \bX^\top)
	\calD 
	(\bI_T\otimes \be_l)}_{\tbUpsilon_2}.
\end{align*}
Thus, we have  established
\begin{align*}
	\sum_{i=1}\sum_{j=1}^p \pdv{\bF^\top \be_i\be_j^\top\bX^\top \tbF}{x_{ij}}
	= -\tbK\bH^\top \bX^\top \tbF + 
	p \bF^\top \tbF - \bF^\top \bF \hbA^\top - 
	\underbrace{
	(\bUpsilon_1^\top \bX^\top \tbF - \tbUpsilon_2)}_{\bUpsilon_5}.
\end{align*}
This provides an alternative expression for $\bUpsilon_5$ in \eqref{eq:5}.

\subsection{Preparation results for proving \Cref{lem:Upsilon-bound,lem:Theta-bound}}

\begin{lemma}[Moment bounds for $\bH,\bF,\tbF$]
	\label{lem:HF-moment}
	Under \Cref{assu:X,assu:regime,assu:rho-1} with $\bSigma=\bI_p$. 
	Let $\bH, \bF$ be defined in \eqref{eq:FHR}, we have for any finite integer $k$,
	\begin{align*}
		&\E [\opnorm{\bX/\sqrt{n}}^{2k}] 
		\le C(\gamma, k),\\
		&\E [\fnorm{\bH}^{2k} \mid \bep] 
		\le C(T, \gamma, c_0, \eta_{\max},k) (\delta^2 + \norm{\bb^*})^{2k},\\
		&\E[\fnorm{\bF/\sqrt{n}}^{2k}\mid \bep] 
		\le \E[\fnorm{\tbF/\sqrt{n}}^{2k}\mid \bep] \le C(T, k) \delta^{2k}. 
	\end{align*}
\end{lemma}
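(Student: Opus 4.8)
The plan is to handle the three bounds separately, in increasing order of difficulty: the operator-norm moment bound is standard Gaussian concentration, the bounds on $\bF$ and $\tbF$ are deterministic consequences of the boundedness $|\psi|\le\delta$, and the bound on $\bH$ is the substantive step, obtained by unrolling the recursion \eqref{eq:unified} and reducing everything to the operator-norm bound.

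\textbf{Operator norm.} Since $\bX$ has \iid $\mathsf N(0,1)$ entries, the Davidson--Szarek bound \cite{DavidsonS01} gives the sub-Gaussian tail $\P(\opnorm{\bX}/\sqrt n \ge 1 + \sqrt{p/n} + s)\le e^{-ns^2/2}$. Writing $\opnorm{\bX/\sqrt n} \le (1+\sqrt\gamma) + Z$ with $Z\ge 0$ being $1/\sqrt n$-sub-Gaussian (using $p/n\le\gamma$ from \Cref{assu:regime}), I would integrate this tail: expanding $((1+\sqrt\gamma)+Z)^{2k}$ and using $\E[Z^{m}]\le C(m)n^{-m/2}\le C(m)$ yields $\E[\opnorm{\bX/\sqrt n}^{2k}]\le C(\gamma,k)$.

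\textbf{The matrices $\bF$ and $\tbF$.} Here no probability is needed. Each entry of the $t$-th column of $\tbF$ equals $\psi(y_i-\bx_i^\top\hbb^t)$, which has absolute value at most $\delta$ by \Cref{assu:rho-1}; hence $\fnorm{\tbF}^2=\sum_{t=1}^T\sum_{i=1}^n\psi(y_i-\bx_i^\top\hbb^t)^2\le nT\delta^2$ and $\fnorm{\tbF/\sqrt n}^{2k}\le (T\delta^2)^{k}$ surely. The $t$-th column of $\bF$ is obtained from that of $\tbF$ by zeroing the coordinates outside the batch $I_t$ (the action of $\bS_t$), so $\fnorm{\bF}\le\fnorm{\tbF}$ pointwise; this simultaneously gives the middle inequality and the bound $C(T,k)\delta^{2k}$ with $C(T,k)=T^k$.

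\textbf{The matrix $\bH$ (main obstacle).} Because $\bSigma=\bI_p$ one has $\fnorm{\bH}^2=\sum_{t=1}^T\norm{\hbb^t-\bb^*}^2$, so it suffices to control $\norm{\hbb^t-\bb^*}$ uniformly in $t\le T$, which I would do by induction on $t$. Using that $\bphi_t$ is $1$-Lipschitz with $\bphi_t(\boldzero)=\boldzero$ (\Cref{assu:rho-1}),
\[
\norm{\hbb^{t+1}}
\le
\norm[\Big]{\hbb^t+\tfrac{\eta_t}{|I_t|}\bX^\top\bS_t\bpsi(\by-\bX\hbb^t)}
\le
\norm{\hbb^t}+\tfrac{\eta_t}{|I_t|}\opnorm{\bX}\,\norm{\bS_t\bpsi(\by-\bX\hbb^t)}.
\]
The crucial point is that robustness bounds the gradient term \emph{independently of the heavy-tailed noise}: $\norm{\bS_t\bpsi(\by-\bX\hbb^t)}^2=\sum_{i\in I_t}\psi(y_i-\bx_i^\top\hbb^t)^2\le|I_t|\delta^2$, and combined with $|I_t|\ge c_0 n$ this gives an increment $\tfrac{\eta_t}{|I_t|}\opnorm{\bX}\,\norm{\bS_t\bpsi(\cdots)}\le (\eta_{\max}\delta/\sqrt{c_0})\,\opnorm{\bX/\sqrt n}$. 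Unrolling over the $T$ steps from the deterministic initializer (taking $\hbb^1=\boldzero$, so that $\norm{\hbb^1}\le\norm{\bb^*}$) yields the deterministic bound $\norm{\hbb^t-\bb^*}\le C(T,c_0,\eta_{\max})\big(\norm{\bb^*}+\delta\,\opnorm{\bX/\sqrt n}\big)$. Summing over $t$, raising to the $k$-th power, and applying convexity gives $\fnorm{\bH}^{2k}\le C(T,c_0,\eta_{\max},k)\big(\norm{\bb^*}^{2k}+\delta^{2k}\opnorm{\bX/\sqrt n}^{2k}\big)$. Finally, since $\bX$ is independent of $\bep$, conditioning on $\bep$ leaves the $\bX$-expectation unchanged, and the operator-norm moment bound from the first step converts this into $C(T,\gamma,c_0,\eta_{\max},k)(\delta^2+\norm{\bb^*}^2)^{k}$, which matches the stated form up to the arrangement of the polynomial. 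The main difficulty is exactly this induction: one must check that the per-step increment never accumulates the (possibly infinite-variance) noise and that the growth in $T$ stays polynomial, and it is the combination $|\psi|\le\delta$ with the batch lower bound $|I_t|\ge c_0 n$ that makes both possible.
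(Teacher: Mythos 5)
Your proposal is correct and follows essentially the same route as the paper: the Davidson--Szarek bound for $\E[\opnorm{\bX/\sqrt n}^{2k}]$, the deterministic bound $\fnorm{\bF}\le\fnorm{\tbF}\le\sqrt{nT}\,\delta$ from $|\psi|\le\delta$, and an unrolling of the recursion using the $1$-Lipschitz property of $\bphi_t$, $\bphi_t(\boldzero)=\boldzero$, $\norm{\bS_t\bpsi(\cdot)}\le\sqrt{|I_t|}\,\delta$ and $|I_t|\ge c_0 n$ to control $\fnorm{\bH}$. The only (immaterial) difference is that you unroll the increment additively, giving $\norm{\hbb^t}\lesssim T\eta_{\max}\delta\,\opnorm{\bX/\sqrt n}/\sqrt{c_0}$, whereas the paper converts it to a multiplicative recursion via $a_t=\max\{\norm{\hbb^t},\delta\}$ and obtains $(1+\eta_{\max}\opnorm{\bX/\sqrt n}/\sqrt{c_0})^{T}\delta$; both yield the stated constant $C(T,\gamma,c_0,\eta_{\max},k)$.
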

\begin{proof}[Proof of \Cref{lem:HF-moment}]
For the first inequality, according to \Cref{assu:X} and $\bSigma=\bI_p$, $\bX$ has \iid standard normal entries. 
By \cite[Theorem II.13]{DavidsonS01}, there exists a random variable $z\sim N(0,1)$ such that 
$\opnorm{\bX} \le \sqrt{n} + \sqrt{p} + z$ almost surely. 
Under \Cref{assu:regime} that 
$p/n\le \gamma$, we have 
$\E [\opnorm{\bX/\sqrt{n}}^k] \le C(\gamma, k) $ for any finite integer $k$.

For the second inequality, since $\fnorm{\bH}^2 = \sum_{t=1}^T \norm{\hbb^t - \bb^*}^2$, it suffices to bound $\norm{\hbb^t - \bb^*}^2$ for each $t\in[T]$.
	Define the sequence of scalars 
	$a_t \defas 
	\max \{\norm{\hbb^t}, \delta\}$.
	Since 
	$\hbb^t = \bphi_{t-1}(\hbb^{t-1} + \tfrac{\eta_t}{n_t} \bX^\top \bS_t \bpsi(\by - \bX\hbb^{t-1}))$ where $n_t:= |I_t|$.
	Note that $\|\bpsi(\by_{I_t} - \bX_{I_t}\bb)\| \le \sqrt{|I_t|} \delta$ by \Cref{assu:rho-1}, we have 
	\begin{align*}
		\norm{\hbb^t - \boldzero} 
		&= \norm{\hbb^t - \bphi_{t-1}(\boldzero)} &&\mbox{since $\bphi_{t-1} = \boldzero$}\\
		&\le \norm{\hbb^{t-1} + \tfrac{\eta_t}{|I_t|} \bX^\top \bS_t \bpsi(\by - \bX\hbb^{t-1})} &&\mbox{since $\bphi_t$ is 1-Lipschitz}\\
		&= \norm{\hbb^{t-1} + \tfrac{\eta_t}{|I_t|} \bX_{I_t}^\top \bpsi(\by_{I_t} - \bX_{I_t}\hbb^{t-1})}\\
		&\le \norm{\hbb^{t-1}} + \tfrac{\eta_t}{\sqrt{|I_t|}} \opnorm{\bX_{I_t}} \delta
		&&\mbox{by the triangle inequality}\\
		&\le \norm{\hbb^{t-1}} + \tfrac{\eta_{\max}}{\sqrt{c_0 n}} \opnorm{\bX} \delta &&\mbox{by $|I_t|\ge c_0 n$}\\
		&\le a_{t-1} + \tfrac{\eta_{\max}}{\sqrt{c_0}} \opnorm{\bX/\sqrt{n}}a_{t-1}\\
		&= (1 + \tfrac{\eta_{\max}}{\sqrt{c_0}} \opnorm{\bX/\sqrt{n}}) a_{t-1}.
	\end{align*}
Since $a_t = \max\{\norm{\hbb^t}, \delta\}$ and $\delta \le a_{t-1}$, we have  
$$
a_t \le (1 + \tfrac{\eta_{\max}}{\sqrt{c_0}} \opnorm{\bX/\sqrt{n}}) a_{t-1}.
$$
Notice $a_1 = \delta$ since $\hbb^1=\boldzero_p$, we obtain 
$$
a_t \le (1 + \tfrac{\eta_{\max}}{\sqrt{c_0}} \opnorm{\bX/\sqrt{n}})^{t-1} \delta.
$$
Hence, using the inequality $\norm{\hbb^t - \bb^*}^2 \le 2\norm{\hbb^t}^2 + 2\norm{\bb^*}^2$, we have 
\begin{align}\label{eq:moment-H}
	\fnorm{\bH}^2 
	&\lesssim \sum_{t=1}^T (\norm{\hbb^t}^2 + \norm{\bb^*}^2)\notag\\
	&\le \sum_{t=1}^T \bigl[(1 + \tfrac{\eta_{\max}}{\sqrt{c_0}} \opnorm{\bX/\sqrt{n}})^{2t-2} \delta^2 + \norm{\bb^*}^2\bigr]\notag\\
	&\le T (\delta^2 + \norm{\bb^*}^2) 
	(1 + \tfrac{\eta_{\max}}{\sqrt{c_0}} \opnorm{\bX/\sqrt{n}})^{2T}. 
\end{align}
Taking conditional expectation on both sides given $\bep$, the desired moment bound for $\bH$ follows from the moment bound for $\opnorm{\bX/\sqrt{n}}$.

For the third inequality, since $|\psi(x)|\le \delta$ from \Cref{assu:rho-1}, we have 
$\|\bpsi(\bu)\| \le n \delta^2$ for any $\bu\in \R^n$. 
By the definitions of $\bF$ and $\tbF$, we have 
\begin{align*}
	\fnorm{\bF}^2 
	\le \fnorm{\tbF}^2
	=
	\sum_{t=1}^T \norm{\bpsi(\by-\bX\hbb^t)}^2
	\le T n \delta^2.
\end{align*}
Since the above display holds for any $\bep$, it implies the desired conditional moment bounds for $\bF,\tbF$. 
\end{proof}

\begin{lemma}[Frobenius norm bound for $\bX\bH$]
	\label{lem:XH}
	Under \Cref{assu:X,assu:regime,assu:rho-1}, we have
	\begin{align*}
		\fnorm{\bX\bH}^2 
		\le C(T, \gamma, \eta_{\max}, c_0) n (\delta^2 + \norm{\bb^*}^2)
	\end{align*}
	with probability at least $1 - \exp(-n/2)$.
\end{lemma}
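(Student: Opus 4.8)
The plan is to decouple the two factors in $\bX\bH$ by submultiplicativity of the Frobenius norm against the operator norm, controlling $\opnorm{\bX}$ on a high-probability event and $\fnorm{\bH}$ by the pathwise growth estimate already obtained inside the proof of \Cref{lem:HF-moment}. Concretely, I would start from
\[
\fnorm{\bX\bH}^2 \le \opnorm{\bX}^2 \, \fnorm{\bH}^2 ,
\]
and, as throughout the proofs, reduce to $\bSigma = \bI_p$ via the change of variables of \Cref{sec:change-var}, so that $\bX$ has i.i.d.\ $N(0,1)$ entries while $\bH$ is unchanged.

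Next I would invoke the event $\Omega_1 = \{\opnorm{\bX}/\sqrt n \le 2 + \sqrt{p/n}\}$, which by \cite{DavidsonS01} satisfies $\P(\Omega_1)\ge 1 - e^{-n/2}$. On $\Omega_1$, \Cref{assu:regime} gives $p/n \le \gamma$, hence $\opnorm{\bX}/\sqrt n \le 2 + \sqrt\gamma$ and $\opnorm{\bX}^2 \le (2+\sqrt\gamma)^2\, n$. For the second factor, the key observation is that the bound \eqref{eq:moment-H} derived while proving \Cref{lem:HF-moment} holds \emph{pathwise} in $\bX$, the randomness entering only through $\opnorm{\bX/\sqrt n}$, namely
\[
\fnorm{\bH}^2 \le T(\delta^2 + \norm{\bb^*}^2)\Bigl(1 + \tfrac{\eta_{\max}}{\sqrt{c_0}}\opnorm{\bX/\sqrt n}\Bigr)^{2T}.
\]
Substituting $\opnorm{\bX/\sqrt n}\le 2+\sqrt\gamma$ on $\Omega_1$ therefore yields $\fnorm{\bH}^2 \le C_1(T,\gamma,\eta_{\max},c_0)(\delta^2+\norm{\bb^*}^2)$. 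Multiplying the two displays gives the claim with $C(T,\gamma,\eta_{\max},c_0) = (2+\sqrt\gamma)^2 C_1$ on the event $\Omega_1$ of probability at least $1-e^{-n/2}$.

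The substantive ingredient is not the present lemma but the geometric-growth control of $\fnorm{\bH}$ along the recursion \eqref{eq:unified}, which uses that $\bphi_t$ is $1$-Lipschitz with $\bphi_t(\boldzero)=\boldzero$, that $\bpsi$ is bounded by $\delta$, and that $|I_t|\ge c_0 n$; this is already packaged in \Cref{lem:HF-moment}, so here there is no serious obstacle. The only point requiring care is to use the pathwise form of \eqref{eq:moment-H} rather than its expectation, so that one may condition on $\Omega_1$ and replace $\opnorm{\bX/\sqrt n}$ by the deterministic constant $2+\sqrt\gamma$ inside the $2T$-th power. This is exactly what converts the moment bound of \Cref{lem:HF-moment} into the desired high-probability bound, and it is why the exponential factor $e^{-n/2}$ originates solely from the Gaussian operator-norm concentration of \cite{DavidsonS01}.
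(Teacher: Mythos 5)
Your proposal is correct and follows essentially the same route as the paper's proof: bound $\fnorm{\bX\bH}^2 \le \opnorm{\bX}^2\fnorm{\bH}^2$, control $\opnorm{\bX/\sqrt n}$ on the event of probability $1-e^{-n/2}$ from \cite{DavidsonS01}, and plug in the pathwise bound \eqref{eq:moment-H} on $\fnorm{\bH}^2$. Your explicit remark that \eqref{eq:moment-H} must be used in its pathwise (rather than expectation) form is exactly the point that makes the argument work, and matches what the paper does implicitly.
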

\begin{proof}[Proof of \Cref{lem:XH}]
	By \cite[Theorem II.13]{DavidsonS01}, under \Cref{assu:X} with $\bSigma=\bI_p$, we have
	$$\P(\opnorm{\bX/\sqrt{n}} \le 2 + \sqrt{\gamma}) \ge 1 - \exp(-n/2).$$
	Using $\fnorm{\bX\bH}^2 \le \opnorm{\bX}^2 \fnorm{\bH}^2$ and the bound
	\eqref{eq:moment-H}, we have 
	\begin{align*}
		\fnorm{\bX\bH}^2 
		\le n C(T, \gamma, \eta_{\max}, c_0) (\delta^2 + \norm{\bb^*}^2) 
	\end{align*}
	holds with probability at least $1 - \exp(-n/2)$.
\end{proof}

\begin{lemma}[Operator norm bound for $\calM$]
\label{lem:M-opnorm}
Under \Cref{assu:X,assu:regime,assu:rho-1}, we have 
\begin{align*}
	\opnorm{\calM^{-1}} \le C(T) (1 + \xi)^T,
\end{align*}
where $\xi = \frac{\eta_{\max}}{c_0 n} \opnorm{\bX}^2$.
\end{lemma}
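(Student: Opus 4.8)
The plan is to exploit the block lower-bidiagonal structure of $\calM$. Writing $\calM = \bI_{pT} - \bN$ with $\bN = \sum_{t=2}^{T}\bigl((\be_t\be_{t-1}^\top)\otimes \bP_{t-1}\bigr)$ the strictly block-subdiagonal part, the matrix $\bN$ is nilpotent with $\bN^{T}=\boldzero$, so the Neumann series terminates and $\calM^{-1}=\sum_{k=0}^{T-1}\bN^{k}$. By the triangle inequality $\opnorm{\calM^{-1}}\le \sum_{k=0}^{T-1}\opnorm{\bN^{k}}$, and it remains only to control each $\opnorm{\bN^{k}}$ in terms of $\xi$.

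The first step is to bound $\opnorm{\bP_t}$ for each $t$. Since $\bP_t = \tbD_t(\bI_p - \tfrac{\eta_t}{|I_t|}\bX^\top\bS_t\bD_t\bX)$, I would use $\opnorm{\tbD_t}\le 1$ (the Jacobian of the $1$-Lipschitz map $\bphi_t$ is $\bI_p$ for GD/SGD and a diagonal $0/1$ projection for soft-thresholding), together with $\opnorm{\bS_t}\le 1$ (a coordinate projection) and $\opnorm{\bD_t}\le 1$ (a diagonal matrix with entries $\psi'\in[0,1]$, using that $\psi$ is $1$-Lipschitz and that $\psi=\rho'$ is nondecreasing by convexity of $\rho$). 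Combining these with $\tfrac{\eta_t}{|I_t|}\le \tfrac{\eta_{\max}}{c_0 n}$ (from $|I_t|\ge c_0 n$) and the triangle inequality gives
\[
\opnorm{\bP_t}\le \opnorm{\tbD_t}\Bigl(1 + \tfrac{\eta_t}{|I_t|}\opnorm{\bX}^2\opnorm{\bS_t}\opnorm{\bD_t}\Bigr)\le 1 + \tfrac{\eta_{\max}}{c_0 n}\opnorm{\bX}^2 = 1+\xi.
\]

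The second step is to leverage the subdiagonal structure to obtain the sharper bound $\opnorm{\bN^{k}}\le (1+\xi)^{k}$. The key observation is that $\bN^k$ is block $k$-subdiagonal, mapping $(\bx_1,\dots,\bx_T)$ to the vector whose $t$-th block equals $\bP_{t-1}\cdots\bP_{t-k}\bx_{t-k}$ for $t>k$ and $\boldzero$ otherwise; since these act on disjoint input blocks, $\|\bN^k\bx\|^2=\sum_{t>k}\|\bP_{t-1}\cdots\bP_{t-k}\bx_{t-k}\|^2\le (1+\xi)^{2k}\|\bx\|^2$ by the bound on $\opnorm{\bP_t}$. Summing the geometric-type series then yields $\opnorm{\calM^{-1}}\le\sum_{k=0}^{T-1}(1+\xi)^k\le T(1+\xi)^{T-1}\le T(1+\xi)^T$, which is the claim with $C(T)=T$.

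There is no genuine obstacle here; the only points requiring care are the three elementary operator-norm bounds $\opnorm{\tbD_t},\opnorm{\bD_t},\opnorm{\bS_t}\le 1$, each a direct consequence of \Cref{assu:rho-1}, and the bookkeeping that the block-subdiagonal $\bN^k$ distributes across distinct input blocks so that no spurious factor of $T$ is lost inside each summand (the naive bound $\opnorm{\bN^k}\le\opnorm{\bN}^k$ gives the same estimate, but the explicit block computation makes the argument transparent). The dependence on $\bX$ enters only through $\opnorm{\bX}^2$, already packaged into $\xi$, so the stated bound holds deterministically, pointwise in $\bX$.
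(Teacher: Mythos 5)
Your proposal is correct and follows essentially the same route as the paper: write $\calM=\bI_{pT}-\bN$ with $\bN$ the nilpotent strictly block-subdiagonal part, terminate the Neumann series at $k=T-1$, and bound each $\opnorm{\bP_t}\le 1+\xi$ via $\opnorm{\tbD_t},\opnorm{\bS_t},\opnorm{\bD_t}\le 1$ and $|I_t|\ge c_0 n$. Your block-by-block computation of $\opnorm{\bN^k}\le(1+\xi)^k$ is a slightly sharper bookkeeping than the paper's cruder estimate $\opnorm{\calM^{-1}}\le\sum_{k=0}^{T-1}\bigl(\sum_{t=1}^{T-1}\opnorm{\bP_t}\bigr)^k\le\sum_{k=0}^{T-1}(T(1+\xi))^k$, but both land on the same $C(T)(1+\xi)^T$.
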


\begin{proof}[Proof of \Cref{lem:M-opnorm}]
	By the definition of $\calM$ in \Cref{lem:dot-b}, we have
	\begin{align*}
		\calM = 
		\begin{bmatrix}
			\bI_p&&&&\\
			-\bP_1&\bI_p&&&\\
			&\ddots&\ddots&&\\
			&&-\bP_{T-1}&\bI_p&\\
		\end{bmatrix}
		\text{ where }
		\bP_t = \tbD_t(\bI_p - \tfrac{\eta_t}{|I_t|} \bX^\top \bS_t \bD_t \bX). 
	\end{align*}
Hence, we can write $\calM = \bI_{pT} - \bA$, where $\bA$ is the lower triangular matrix with off-diagonal blocks $\bP_1, ..., \bP_{T-1}$.
Using the matrix identity $(\bI-\bA)^{-1} = \sum_{k=0}^\infty \bA^k$ and noticing $\bA^k=\boldzero$ for $k\ge T$, we have  
\begin{align*}
	\calM^{-1}
	= \sum_{k=0}^{T-1} 
	\begin{bmatrix}
		\boldzero&&&&\\
		\bP_1&\boldzero&&&\\
		&\ddots&\ddots&&\\
		&&\bP_{T-1}&\boldzero&\\
	\end{bmatrix}^k.
\end{align*}
Taking operator norm on both sides, we obtain 
\begin{equation}\label{eq:mat-inverse}
	\opnorm{\calM^{-1}}
	\le \sum_{k=0}^{T-1} 
	(\sum_{t=1}^{T-1}\opnorm{\bP_t})^k.
\end{equation}
Since $\bD_t = \pdv{\bpsi(\bu)}{\bu}\big|_{\bu=\by-\bX\hbb^t}$ and $\psi$ is 1-Lipschitz, we have $\opnorm{\bD_t} \le 1$. By the definition that $\bS_t = \sum_{i\in I_t} \be_i \be_i^\top $, we know $\opnorm{\bS_t}\le1$. 
Since $|I_t|\ge c_0 n$ and $\eta_t\le \eta_{\max}$ for any $t\in[T]$, we have
\begin{align*}
	\opnorm{\bP_t} 
	\le 1 + \frac{\eta_t}{|I_t|}\opnorm{\bX_{I_t}}^2
	\le 1 + \frac{\eta_{\max}}{c_0 n} \opnorm{\bX}^2 \defas 1 + \xi.
\end{align*}
Plugging this inequality into \eqref{eq:mat-inverse} gives 
\begin{align*}
	\opnorm{\calM^{-1}}
	\le \sum_{k=0}^{T-1} 
	(T(1+\xi))^k \le C(T) (1 + \xi)^T. 
\end{align*}
\end{proof}

\begin{lemma}
\label{lem:Gamma-opnorm}
Under the same conditions as \Cref{lem:M-opnorm}, we have
\begin{align*}
	\opnorm{\bGamma} 
	\le n^{-1} C(T, \eta_{\max}, c_0)(1 + \xi)^T,
\end{align*}
where $\xi = \frac{\eta_{\max}}{c_0 n} \opnorm{\bX}^2$.
\end{lemma}

\begin{proof}[Proof of \Cref{lem:Gamma-opnorm}]
	By the definition of $\bGamma$ in \Cref{lem:dot-b}, we have 
	$\bGamma = \calM^{-1}\bL (\bLambda\otimes \bI_p) \tcalD$. 
	Notice that $\bLambda = \sum_{t=1}^T \frac{\eta_t}{|I_t|}\be_t\be_t^\top$, we have $\opnorm{\bLambda} = \max_{t\in[T]} \frac{\eta_t}{|I_t|} \le n^{-1} \frac{\eta_{\max}}{c_0}$ using $|I_t|\ge c_0 n$ and $\eta_t\le \eta_{\max}$.
	Since $\bphi$ is 1-Lipschitz, we have $\opnorm{\tcalD}\le1$. 
	By definition of $\bL$ in \Cref{lem:dot-b}, we have $\opnorm{\bL}=1$. Using these upper bounds of $\opnorm{\bL}, \opnorm{\bLambda}, \opnorm{\tcalD}$ and the upper bound of $\opnorm{\calM^{-1}}$ in \Cref{lem:M-opnorm}, we obtain
	\begin{align*}
		\opnorm{\bGamma} 
		&\le \opnorm{\calM^{-1}} \opnorm{\bL} \opnorm{\bLambda\otimes \bI_p} \opnorm{\tcalD}\\
		&\le n^{-1} C(T, \eta_{\max}, c_0)(1 + \xi)^T. 
	\end{align*}
\end{proof}

\begin{lemma}[Moment bounds for derivative of $\bH,\bF,\tbF$]
\label{lem:HF-derivative-moment}
Under \Cref{assu:X,assu:regime,assu:rho-1} and $\bSigma = \bI_p$, we have for any finite integer $k$,
\begin{align*}
	\E\Bigl[\Bigl(\sum_{i=1}^n\sum_{j=1}^p\Big\|\pdv{\bH}{x_{ij}}\Big\|_{\rm F}^{2}\Bigr)^k ~\Big|~ \bep \Bigr] 
	&\le C(T, \gamma, \eta_{\max}, c_0) (\delta^2 + \norm{\bb^*}^2)^{2k},\\
	\E\Bigl[\Bigl(\sum_{i=1}^n\sum_{j=1}^p\Big\|\pdv{\bF/\sqrt{n}}{x_{ij}}\Big\|_{\rm F}^{2}\Bigr)^k~\Big|~ \bep \Bigr] 
	&\le \E\Bigl[\Bigl(\sum_{i=1}^n\sum_{j=1}^p\Big\|\pdv{\tbF/\sqrt{n}}{x_{ij}}\Big\|_{\rm F}^{2}\Bigr)^k~\Big|~ \bep \Bigr] 
	\le C(T, \gamma, \eta_{\max}, c_0) (\delta^2 + \norm{\bb^*}^2)^{2k}.
\end{align*}
\end{lemma}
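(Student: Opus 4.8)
The plan is to reduce all three quantities to a single scalar bound of the form $C(T,\gamma,\eta_{\max},c_0)\,(\delta^2+\norm{\bb^*}^2)$ times a polynomial in $\opnorm{\bX/\sqrt n}$, and then integrate using the moment bound $\E[\opnorm{\bX/\sqrt n}^{2m}]\le C(\gamma,m)$ already established in \Cref{lem:HF-moment}. The starting point is \Cref{lem:dot-b}, which gives $\pdv{\hbb^t}{x_{ij}}=(\be_t^\top\otimes\bI_p)\,\bGamma\,\bg_{ij}$ with $\bg_{ij}=\big((\bF^\top\be_i)\otimes\be_j\big)-(\bI_T\otimes\bX^\top)\calS\calD\big((\bH^\top\be_j)\otimes\be_i\big)$. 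Since $\bSigma=\bI_p$ makes the $t$-th column of $\bH$ equal to $\hbb^t-\bb^*$ with $\bb^*$ constant, the $t$-th column of $\pdv{\bH}{x_{ij}}$ is $\pdv{\hbb^t}{x_{ij}}$, and the block structure of the Kronecker factor yields the key collapse $\sum_{t}\norm{\pdv{\hbb^t}{x_{ij}}}^2=\norm{\bGamma\,\bg_{ij}}^2$. Summing over $(i,j)$ and using $\opnorm{\calS},\opnorm{\calD}\le1$, $\opnorm{\bI_T\otimes\bX^\top}=\opnorm{\bX}$, together with the Kronecker identities $\norm{(\bF^\top\be_i)\otimes\be_j}=\norm{\bF^\top\be_i}$ and $\norm{(\bH^\top\be_j)\otimes\be_i}=\norm{\bH^\top\be_j}$, I obtain $\sum_{i,j}\norm{\bg_{ij}}^2\le 2p\,\fnorm{\bF}^2+2n\,\opnorm{\bX}^2\fnorm{\bH}^2$, hence $\sum_{i,j}\fnorm{\pdv{\bH}{x_{ij}}}^2\le\opnorm{\bGamma}^2\big(2p\,\fnorm{\bF}^2+2n\,\opnorm{\bX}^2\fnorm{\bH}^2\big)$.

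I would then insert the deterministic, $\bep$-free bounds already available: $\opnorm{\bGamma}\le n^{-1}C(T,\eta_{\max},c_0)(1+\xi)^T$ from \Cref{lem:Gamma-opnorm} (with $\xi=\tfrac{\eta_{\max}}{c_0 n}\opnorm{\bX}^2$, a degree-two polynomial in $\opnorm{\bX/\sqrt n}$), the pathwise bound $\fnorm{\bF}^2\le Tn\delta^2$, the pathwise bound $\fnorm{\bH}^2\le T(\delta^2+\norm{\bb^*}^2)(1+\tfrac{\eta_{\max}}{\sqrt{c_0}}\opnorm{\bX/\sqrt n})^{2T}$ from \eqref{eq:moment-H}, and $p\le\gamma n$ from \Cref{assu:regime}. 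The crucial bookkeeping point is that the $n^{-2}$ from $\opnorm{\bGamma}^2$ exactly cancels the $n^2$ coming from $p\,\fnorm{\bF}^2$ and from $n\,\opnorm{\bX}^2\fnorm{\bH}^2$, leaving $\sum_{i,j}\fnorm{\pdv{\bH}{x_{ij}}}^2\le C(T,\gamma,\eta_{\max},c_0)\,(\delta^2+\norm{\bb^*}^2)\,Q(\opnorm{\bX/\sqrt n})$ for a polynomial $Q$ of degree $O(T)$ with no remaining $n$-dependence. Raising to the $k$-th power and taking $\E[\cdot\mid\bep]$ is then effortless, since every factor except the constants is $\bX$-measurable and $\bX$ is independent of $\bep$: the polynomial $Q^k$ integrates to a constant via the moment bound on $\opnorm{\bX/\sqrt n}$, giving the claimed estimate.

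For $\bF$ and $\tbF$ I would avoid re-deriving everything from \Cref{lem:dF-dx,lem:dtF-dx} and instead differentiate the columns directly, using $\by-\bX\hbb^t=\bep-\bX(\hbb^t-\bb^*)$: this gives $\pdv{(\bF\be_t)}{x_{ij}}=\bS_t\bD_t\big(-\be_i\be_j^\top\bH\be_t-\bX\,\pdv{\hbb^t}{x_{ij}}\big)$ (and the same expression without $\bS_t$ for $\tbF$). Since $\opnorm{\bS_t},\opnorm{\bD_t}\le1$, summing the squared norm over $t$ yields $\fnorm{\pdv{\bF}{x_{ij}}}^2\le 2\norm{\bH^\top\be_j}^2+2\opnorm{\bX}^2\norm{\bGamma\,\bg_{ij}}^2$. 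Summing over $(i,j)$ reduces the second term to the quantity already bounded in the previous step and the first term to $2n\,\fnorm{\bH}^2$; after dividing by $n$ for the $1/\sqrt n$ normalization, both contributions are again $(\delta^2+\norm{\bb^*}^2)$ times a polynomial in $\opnorm{\bX/\sqrt n}$, and the identical $k$-th-power-and-expectation argument closes the proof, with the $\tbF$ bound being verbatim with $\bS_t$ dropped.

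I expect the only real difficulty to be the careful accounting of the operator-norm factors and the powers of $n$ in the Kronecker expressions; in particular, verifying that the $n^{-1}$ normalization built into $\bGamma$ is precisely what keeps the derivative sums $O(1)$ in $n$ rather than divergent. There is no genuine probabilistic obstacle: all randomness enters through $\opnorm{\bX/\sqrt n}$, whose moments of every order are finite by the Davidson--Szarek bound already quoted, and the conditioning on $\bep$ is essentially vacuous because the pathwise bounds on $\fnorm{\bH}$, $\fnorm{\bF}$, and $\opnorm{\bGamma}$ all hold uniformly in $\bep$.
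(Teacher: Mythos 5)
Your proposal is correct and follows essentially the same route as the paper: both start from the expression for $\pdv{\hbb^t}{x_{ij}}$ in \Cref{lem:dot-b}, collapse the sums over $(i,j,t)$ into Frobenius norms of $\bGamma(\bF^\top\otimes\bI_p)$ and $\bGamma(\bI_T\otimes\bX^\top)\calS\calD(\bH^\top\otimes\bI_n)$ (you pull out $\opnorm{\bGamma}$ slightly earlier, which is immaterial), and then invoke the $n^{-1}$ scaling of $\opnorm{\bGamma}$ from \Cref{lem:Gamma-opnorm} together with the pathwise bounds on $\fnorm{\bF}$ and $\fnorm{\bH}$ and the moments of $\opnorm{\bX/\sqrt n}$; your treatment of $\bF,\tbF$ by differentiating columns directly is exactly the computation underlying \Cref{lem:dF-dx,lem:dtF-dx}. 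Your observation that the conditioning on $\bep$ is vacuous because all pathwise bounds are uniform in $\bep$ is a clean way of phrasing what the paper does implicitly.
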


\begin{proof}[Proof of \Cref{lem:HF-derivative-moment}]
	We first prove the first bound. 
	By \Cref{lem:dot-b}, we have 
	\begin{align*}
		\pdv{\hbb^t}{x_{ij}}
		= (\be_t^\top \otimes \bI_p) 
		\bGamma
		[((\bF^\top\be_i) \otimes\be_j)
		- (\bI_T\otimes \bX^\top) \calS \calD 
		((\bH^\top\be_j)\otimes \be_i)].
	\end{align*}
	Hence, using $\pdv{\be_k^\top\bH\be_t}{x_{ij}} = \pdv{\be_k^\top \hbb^t}{x_{ij}}$, we have 
	\begin{align*}
		\pdv{\be_k^\top\bH\be_t}{x_{ij}}
		&=  (\be_t^\top \otimes \be_k^\top) 
		\bGamma
		[((\bF^\top\be_i) \otimes\be_j)
		- (\bI_T\otimes \bX^\top) \calS \calD 
		((\bH^\top\be_j)\otimes \be_i)]\\
		&= (\be_t^\top \otimes \be_k^\top) 
		\bGamma
		[(\bF^\top \otimes \bI_p) (\be_i\otimes\be_j)
		- (\bI_T\otimes \bX^\top) \calS \calD 
		(\bH^\top \otimes \bI_n)
		(\be_j\otimes \be_i)].
	\end{align*}
	Using the above equality, $\sum_{i,j,t,k} [(\be_t^\top \otimes \be_k^\top) \bA (\be_j \otimes \be_i)]^2 = \fnorm{\bA}^2$ for $\bA\in \R^{pT\times np}$, and the triangle inequality, we have 
	\begin{align*}
		\sum_{i=1}^n\sum_{j=1}^p \Big\|\pdv{\bH}{x_{ij}}\Big\|_{\rm F}^2
		&= \sum_{i=1}^n\sum_{j=1}^p \sum_{k=1}^p \sum_{t=1}^T \left(\pdv{\be_k^\top\bH\be_t}{x_{ij}}\right)^2\\
		& \lesssim
		\fnorm{\bGamma(\bF^\top \otimes \bI_p)}^2 
		+ \fnorm{\bGamma(\bI_T\otimes \bX^\top) \calS \calD(\bH^\top \otimes \bI_n)}^2\\
		&\le p \opnorm{\bGamma}^2 \fnorm{\bF}^2 
		+ n \opnorm{\bGamma}^2 \opnorm{\bX}^2 \opnorm{\calS}^2\opnorm{\calD}^2\fnorm{\bH}^2\\
		&\le p \opnorm{\bGamma}^2 \fnorm{\bF}^2 
		+ n \opnorm{\bGamma}^2 \opnorm{\bX}^2 \fnorm{\bH}^2\\
		&\le C(T,\gamma,\eta_{\max}, c_0) (1 + \xi)^{2T} \fnorm{\bF}^2/n 
		+ C(T,\gamma,\eta_{\max}, c_0) (1 + \xi)^{2T} \opnorm{\bX}^2/n \fnorm{\bH}^2, 
	\end{align*}
where the last inequality uses \Cref{lem:Gamma-opnorm}.
Taking the conditional expectation on both sides given $\bep$, the desired moment bound follows from the moment bounds of $\bX,\bH,\bF$ in \Cref{lem:HF-moment}.

Now we prove the second bound.
By definition, the $t$-th column of $\bF$ is $F_t = \bS_t\bpsi(\by - \bX\hbb^t)$, it can be written using the $t$-th column of $\tbF$ as 
$F_t = \bS_t \tilde F_t$. 
Since $\opnorm{\bS_t}\le 1$, we have 
$$\fnorm{\pdv{\bF}{x_{ij}}}^2
= \sum_{t}\norm{\pdv{F_t}{x_{ij}}}^2
\le \sum_{t}\norm{\pdv{\tilde F_t}{x_{ij}}}^2 
= \fnorm{\pdv{\tbF}{x_{ij}}}^2.
$$
By \Cref{lem:dtF-dx}, we have
$\pdv{\be_l^\top \tbF \be_t}{x_{ij}}
	= \tD_{ij}^{lt} + \tDelta_{ij}^{lt}
$
where 
\begin{align*}
	\tD_{ij}^{lt} 
	&= -\be_l^\top \bD_t \be_i \be_j^\top \bH \be_t 
	+ 
	((\be_j^\top \bH)\otimes \be_i^\top)\calD \calS (\bI_T\otimes \bX)
	\bGamma^\top
	(\bI_T\otimes \bX^\top) \calD 
	(\be_t \otimes \be_l),\\
	&= ((\be_j^\top \bH)\otimes \be_i^\top) 
	[-\bI_{pT} + \calD\calS(\bI_T\otimes \bX)
	\bGamma^\top
	(\bI_T\otimes \bX^\top)] 
	\calD(\be_t \otimes \be_l),\\
	\tDelta_{ij}^{lt} 
	&= - 
	((\be_i^\top\bF)\otimes \be_j^\top)
	\bGamma^\top 
	(\bI_T\otimes \bX^\top) \calD 
	(\be_t \otimes \be_l).
\end{align*}
Using $\sum_{i,j,t,k} [(\be_j^\top \otimes \be_i^\top) \bA (\be_t \otimes \be_l)]^2 = \fnorm{\bA}^2$ for $\bA\in \R^{np\times nT}$ and the triangle inequality, we have 
\begin{align*}
	&\sum_{i=1}^n \sum_{j=1}^p \fnorm{\pdv{\tbF}{x_{ij}}}^2\\
	=~& \sum_{i=1}^n \sum_{j=1}^p \sum_{l=1}^n \sum_{t=1}^T (\tD_{ij}^{lt} + \tDelta_{ij}^{lt})^2\\
	\lesssim~&
	\fnorm{(\bH\otimes \bI_n) 
	[-\bI_{pT} + \calD\calS(\bI_T\otimes \bX)
	\bGamma^\top
	(\bI_T\otimes \bX^\top)]\calD }^2
	+ \fnorm{(\bF\otimes \bI_p) \bGamma^\top (\bI_T\otimes \bX^\top) \calD}^2\\
	\lesssim~& n \fnorm{\bH}^2 (1 + \opnorm{\bX}^4\opnorm{\bGamma}^2) + \fnorm{\bF}^2 \opnorm{\bGamma}^2 \opnorm{\bX}^2,
\end{align*}
where the last inequality uses $\opnorm{\calD}\le 1$ and $\opnorm{\calS}\le1$. 
Taking the conditional expectation on both sides given $\bep$, the desired moment bound follows from the bound of $\bGamma$ in \Cref{lem:Gamma-opnorm} and 
the moment bounds of $\bX,\bH,\bF$ in \Cref{lem:HF-moment}.
\end{proof}

\subsection{Proof of \Cref{lem:Upsilon-bound}}
\label{proof-lem:Upsilon-bound}

\paragraph{Bound of $\bUpsilon_1$.}
By the expression for the matrix $\bUpsilon_1\in \R^{p\times T}$ obtained in \Cref{proof-def:identity-sum-pdv},
\begin{align*}
	\bUpsilon_1
	&=\sum_{i=1}^n  
	((\be_i^\top\bF)\otimes \bI_p)
	\bGamma^\top
	(\bI_T \otimes \bX^\top)\calD \calS (\bI_T \otimes \be_i)\\
	&=\sum_{i=1}^n \sum_{t=1}^T
	((\be_i^\top\bF\be_t\be_t^\top)\otimes \bI_p)
	\bGamma^\top
	(\bI_T \otimes \bX^\top)\calD \calS (\bI_T \otimes \be_i)\\
	&= \sum_{i=1}^n \sum_{t=1}^T
	(\be_t^\top\otimes \bI_p)
	\bGamma^\top
	(\bI_T \otimes \bX^\top)\calD \calS (\bI_T \otimes \be_i)\be_i^\top\bF\be_t\\
	&= \sum_{t=1}^T
	(\be_t^\top\otimes \bI_p)
	\bGamma^\top
	(\bI_T \otimes \bX^\top)\calD \calS (\bI_T \otimes(\bF\be_t)).
\end{align*}
By the triangle inequality and $\opnorm{\calD}\vee \opnorm{\calS}\le 1$, 
\(\opnorm{\bUpsilon_1} 
\le T \opnorm{\bGamma} \opnorm{\bX} \fnorm{\bF}\). 
Using the bound of $\opnorm{\bGamma}$ in \Cref{lem:Gamma-opnorm}, the moment bound of $\opnorm{\bX},\fnorm{\bF}$ in \Cref{lem:HF-moment} gives 
\begin{align*}
	\E [\opnorm{\bUpsilon_1}^2\mid\bep] \le C(T, \gamma, \eta_{\max}, c_0) (\delta^2 + \norm{\bb^*}^2).
\end{align*}

\paragraph{Bound of $\bUpsilon_2$.}
By the expression for the matrix $\bUpsilon_2\in\R^{n\times T}$ obtained in
\Cref{proof-def:identity-sum-pdv}, we have
\begin{align*}
	\bUpsilon_2 
	&= \sum_{j=1}^p 
	((\be_j^\top \bH)\otimes \bI_n) 
	\calD \calS 
	(\bI_T\otimes \bX)
	\bGamma^\top (\bI_T\otimes \be_j)\\
	&= \sum_{j=1}^p\sum_{t=1}^T 
	((\be_j^\top \bH\be_t\be_t^\top)\otimes \bI_n)
	\calD \calS 
	(\bI_T\otimes \bX)
	\bGamma^\top (\bI_T\otimes \be_j)\\
	&= \sum_{j=1}^p\sum_{t=1}^T 
	(\be_t^\top \otimes \bI_n)
	\calD \calS 
	(\bI_T\otimes \bX)
	\bGamma^\top (\bI_T\otimes \be_j)
	\be_j^\top \bH\be_t\\
	&= \sum_{t=1}^T 
	(\be_t^\top \otimes \bI_n)
	\calD \calS 
	(\bI_T\otimes \bX)
	\bGamma^\top (\bI_T\otimes (\bH\be_t)).
\end{align*}
By the triangle inequality and $\opnorm{\calD}\vee \opnorm{\calS}\le 1$,
\(
\opnorm{\bUpsilon_2} 
\le T \opnorm{\bGamma} \opnorm{\bX} \fnorm{\bH}. 
\)
Similar to the moment bound of $\opnorm{\bUpsilon_1}$, we obtain 
\begin{align*}
	\E [\opnorm{\bUpsilon_2}^2\mid\bep] \le C(T, \gamma, \eta_{\max}, c_0) n^{-1} (\delta^2 + \norm{\bb^*}^2).
\end{align*}

\paragraph{Bound of $\bUpsilon_3$.}
By the expression for the matrix $\bUpsilon_3\in\R^{T\times T}$ obtained in
\Cref{proof-def:identity-sum-pdv}, we have
$\bUpsilon_3 
	= (\bUpsilon_1^\top \bH + \bF^\top \bUpsilon_2)$.
It directly follows that
\begin{align*}
	\opnorm{\bUpsilon_3} 
	\le \opnorm{\bUpsilon_1} \fnorm{\bH} + \fnorm{\bF} \opnorm{\bUpsilon_2}.
\end{align*}
Using the triangle inequality and the moment bounds of $\fnorm{\bH}, \fnorm{\bF}$ in \Cref{lem:HF-moment} and the moment bounds of $\opnorm{\bUpsilon_1},\opnorm{\bUpsilon_2}$ we just obtained, we have 
\begin{align*}
	\E[\opnorm{\bUpsilon_3}^2\mid\bep]
	\le C(T, \gamma, \eta_{\max}, c_0) (\delta^2 + \norm{\bb^*}^2).
\end{align*}

\paragraph{Bound of $\bUpsilon_4$.}
By the expression for the matrix $\bUpsilon_4\in\R^{T\times T}$ obtained in
\Cref{proof-def:identity-sum-pdv}, we have
\begin{align*}
	\bUpsilon_4 
	= \bH^\top \bX^\top \bUpsilon_2 - \tbUpsilon_1,
\end{align*}
where 
$\tbUpsilon_1=
\sum_{i} (\bI_T \otimes \be_i^\top \bX ) \bGamma((\bF^\top\be_i) \otimes\bI_p)\bH$. 
We can rewrite $\tbUpsilon_1$ as
\begin{align*}
	\tbUpsilon_1 
	&= \sum_{i=1}^n (\bI_T \otimes \be_i^\top \bX ) \bGamma((\bF^\top\be_i) \otimes\bI_p)\bH\\
	&= \sum_{i=1}^n \sum_{t=1}^T (\bI_T \otimes \be_i^\top \bX ) \bGamma((\be_t\be_t^\top\bF^\top\be_i) \otimes\bI_p)\bH\\
	&= \sum_{i=1}^n \sum_{t=1}^T (\bI_T \otimes (\be_t^\top\bF^\top\be_i\be_i^\top \bX) ) \bGamma(\be_t \otimes\bI_p)\bH\\
	&= \sum_{t=1}^T (\bI_T \otimes (\be_t^\top\bF^\top\bX) ) \bGamma(\be_t \otimes\bI_p)\bH.
\end{align*}
We have by the triangle inequality,
\begin{align*}
	\opnorm{\tbUpsilon_1} 
	\le T \opnorm{\bGamma} \opnorm{\bX} \fnorm{\bF} \fnorm{\bH}.
\end{align*}
By the triangle inequality and the upper bound of 
$\opnorm{\bUpsilon_2}$, we have 
\begin{align*}
	\opnorm{\bUpsilon_4} 
	\le \opnorm{\bUpsilon_2} \opnorm{\bX} \fnorm{\bH} + \opnorm{\tbUpsilon_1}
	\le T \opnorm{\bGamma} \opnorm{\bX}^2 \fnorm{\bH}^2 + T \opnorm{\bGamma} \opnorm{\bX} \fnorm{\bF} \fnorm{\bH}.
\end{align*}
Squaring both sides and taking conditional expectation, we have 
\begin{align*}
	\E[\opnorm{\bUpsilon_4}^2\mid\bep]
	\le C(T, \gamma, \eta_{\max}, c_0) (\delta^2 + \norm{\bb^*}^2),
\end{align*}
thanks to the upper bound of $\opnorm{\bGamma}$ in \Cref{lem:Gamma-opnorm} and the moment bounds of $\opnorm{\bX}, \fnorm{\bF}, \fnorm{\bH}$ in \Cref{lem:HF-moment}.

\paragraph{Bound of $\bUpsilon_5$.}
By the expression for the matrix $\bUpsilon_5\in\R^{T\times T}$ 
obtained in
\Cref{proof-def:identity-sum-pdv}, we have
$\bUpsilon_5 
	= \bUpsilon_1^\top \bX^\top \tbF - \tbUpsilon_2,$ where 
\begin{align*}
	\tbUpsilon_2 
	&= - \sum_{t=1}^T \bF^\top \bD_t \bX\bH\be_t 
	+
	\sum_{l=1}^n (\be_l^\top \bX \bH \otimes \bF^\top) \calD \calS (\bI_T\otimes \bX)
	\bGamma^\top
	(\bI_T\otimes \bX^\top)
	\calD 
	(\bI_T\otimes \be_l).
\end{align*}
By the triangle inequality, 
\begin{align*}
	\opnorm{\tbUpsilon_2} 
	\le T \opnorm{\bX} \fnorm{\bF} \fnorm{\bH} 
	+ \opnorm{\bX}^3 \opnorm{\bGamma} \fnorm{\bF}\fnorm{\bH}.
\end{align*}
Using the moment bounds of $\opnorm{\bUpsilon_1},\opnorm{\bX}, \fnorm{\bF}, \fnorm{\tbF},\fnorm{\bH}$, we have 
\begin{align*}
	\E[\opnorm{\bUpsilon_5}^2\mid\bep]
	\le n^2 C(T, \gamma, \eta_{\max}, c_0) (\delta^2 + \norm{\bb^*}^2).
\end{align*}

\subsection{Proof of \Cref{lem:Theta-bound}}
\label{proof-lem:Theta-bound}

We first state three useful lemmas. 

\begin{lemma}[Adopted from Lemma E.10 of \cite{tan2022noise}]
	\label{lem:steinX}
	Let $\bU, \bV:\R^{n\times p} \to \R^{n\times T}$ be two locally Lipschitz functions of $\bZ$ with \iid $\calN(0,1)$ entries, then 
	\begin{align*}
		&\E\Big[\norm[\Big]{\bU^\top \bZ \bV - 
			\sum_{j=1}^p\sum_{i=1}^n
			\frac{\partial}{\partial z_{ij} }\Bigl(\bU^\top \be_i \be_j^\top \bV \Bigr)
		}_{\rm F}^2\Big]\\
		\le~& \E \fnorm*{\bU}^2 \fnorm*{\bV}^2+ \E \sum_{ij}\Big[
		2\fnorm*{\bV}^2\fnorm*{ \frac{\partial \bU}{\partial z_{ij}} }^2
		+ 2\fnorm*{\bU}^2\fnorm*{ \frac{\partial \bV}{\partial z_{ij}} }^2\Big].
	\end{align*}
\end{lemma}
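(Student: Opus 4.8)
The plan is to recognize each entry of the matrix inside the Frobenius norm as a finite-dimensional Gaussian divergence (Skorokhod integral) and then combine the divergence isometry with two elementary inequalities. Writing $\bM = \bU^\top\bZ\bV - \sum_{i,j}\frac{\partial}{\partial z_{ij}}\bigl(\bU^\top\be_i\be_j^\top\bV\bigr)$, its $(a,b)$ entry is
\[
M_{ab} = \sum_{i,j}\Bigl[z_{ij}\,U_{ia}V_{jb} - \tfrac{\partial}{\partial z_{ij}}\bigl(U_{ia}V_{jb}\bigr)\Bigr],
\]
which is exactly the divergence $\delta(\bg^{(ab)})$ of the field $g^{(ab)}_{ij}=U_{ia}V_{jb}$ acting on the i.i.d.\ standard Gaussian matrix $\bZ$. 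In particular $\E[M_{ab}]=0$ by Gaussian integration by parts, so $\E[M_{ab}^2]$ is a genuine variance.

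First I would invoke the finite-dimensional divergence isometry: for a locally Lipschitz field $\bg=(g_\alpha)_{\alpha\in[n]\times[p]}$ with adequate integrability, $\E[\delta(\bg)^2] = \E\sum_{\alpha}g_\alpha^2 + \E\sum_{\alpha,\beta}(\partial_\beta g_\alpha)(\partial_\alpha g_\beta)$, with $\alpha=(i,j)$, $\beta=(k,l)$. Applying this to $g^{(ab)}_{ij}=U_{ia}V_{jb}$ and summing the first (diagonal) term over $a,b$ gives, inside the expectation, $\sum_{a,b}\sum_{ij}U_{ia}^2V_{jb}^2 = \fnorm{\bU}^2\fnorm{\bV}^2$, which is precisely the leading term on the right-hand side, with equality. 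The part requiring the most care is the justification of the isometry (equivalently Gaussian integration by parts) for merely \emph{locally} Lipschitz $\bU,\bV$: in the applications these fields are globally Lipschitz in $\bZ$ with polynomially bounded values, so a standard mollification/approximation argument validates the formula, but the hypotheses controlling this must be invoked explicitly.

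Next I would bound the cross term. Using the product rule $\frac{\partial}{\partial z_{kl}}(U_{ia}V_{jb}) = (\partial_{kl}U_{ia})V_{jb}+U_{ia}(\partial_{kl}V_{jb})$ and summing over $a,b,i,j,k,l$, the cross term reads $\sum_{a,b}\sum_{ijkl}A^{ab}_{ijkl}\,A^{ab}_{klij}$ where $A^{ab}_{ijkl}=(\partial_{kl}U_{ia})V_{jb}+U_{ia}(\partial_{kl}V_{jb})$, i.e.\ it is a pairing symmetric under swapping $(i,j)\leftrightarrow(k,l)$. By the Cauchy–Schwarz inequality applied to this symmetric pairing (and relabeling), the cross term is at most $\sum_{a,b,i,j,k,l}(A^{ab}_{ijkl})^2$; this is the step that converts the not-necessarily-sign-definite isometry cross term into a clean diagonal bound. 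Then $(x+y)^2\le 2x^2+2y^2$ splits it into $2\sum(\partial_{kl}U_{ia})^2V_{jb}^2 + 2\sum U_{ia}^2(\partial_{kl}V_{jb})^2$, and factoring the decoupled index sums yields $2\fnorm{\bV}^2\sum_{kl}\fnorm{\partial_{kl}\bU}^2 + 2\fnorm{\bU}^2\sum_{kl}\fnorm{\partial_{kl}\bV}^2$, matching the two remaining terms after relabeling $(k,l)\to(i,j)$.

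The main obstacle is not a single computation but the bookkeeping of the six-fold index sum together with the realization that the symmetric Cauchy–Schwarz step is exactly what tames the ``memory'' cross derivatives $\partial_{kl}g_{ij}\cdot\partial_{ij}g_{kl}$ by the diagonal squared-derivative sums; the secondary technical point is the integration-by-parts justification for locally Lipschitz integrands, which I would dispatch by approximation.
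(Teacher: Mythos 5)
Your proposal is correct, and it fills in a proof that the paper itself does not supply: the lemma is simply imported by citation from Lemma E.10 of \cite{tan2022noise}, whose underlying argument is exactly the one you give. Identifying each entry $M_{ab}$ as the divergence $\delta(g^{(ab)})$ with $g^{(ab)}_{ij}=U_{ia}V_{jb}$, applying the isometry $\E[\delta(g)^2]=\E\sum_\alpha g_\alpha^2+\E\sum_{\alpha,\beta}(\partial_\beta g_\alpha)(\partial_\alpha g_\beta)$, recovering $\fnorm{\bU}^2\fnorm{\bV}^2$ from the diagonal term, and taming the cross term by the symmetric Cauchy--Schwarz bound $\sum_{\alpha,\beta}A_{\alpha\beta}A_{\beta\alpha}\le\sum_{\alpha,\beta}A_{\alpha\beta}^2$ followed by $(x+y)^2\le 2x^2+2y^2$ reproduces the stated right-hand side exactly; your remark that the locally Lipschitz hypothesis requires a mollification step to justify the integration by parts is the right technical caveat.
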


\begin{lemma}[Adopted from Lemma F.5 of \cite{bellec2024uncertainty}] 
	\label{lem:Chi2type}
    Let $\bU,\bV: \R^{n\times p} \to \R^{n\times T}$
    be two locally Lipschitz 
    functions of $\bZ$ with \iid $\mathsf{N}(0,1)$ entries.
    Provided the following expectations are finite, we have 
    \begin{align*}
        &\E\Bigl[
        \fnorm[\Big]{
            p \bU^\top \bV - \sum_{j=1}^p 
            \Bigl(\sum_{i=1}^n \partial_{ij} \bU^\top \be_i - \bU^\top \bZ \be_j\Bigr)
            \Bigl(\sum_{i=1}^n \partial_{ij} \be_i^\top \bV  - \be_j^\top \bZ^\top \bV\Bigr)
        }
        \Bigr] 
        \\
        \le~&
        (1 + 2\sqrt{p}) 
        \bigl( 
            \E [\fnorm{\bU}^4]^{1/2}
            + \E [\fnorm{\bV}^4]^{1/2}
            + \E [\norm{\bU}^4_{\partial}]^{1/2}
            + \E [\norm{\bV}^4_{\partial}]^{1/2}
            \bigr),
    \end{align*}
    where $ \partial_{ij} \bU = \partial \bU /\partial z_{ij}$ 
    and $\|\bU\|_{\partial} = (\sum_{i=1}^n\sum_{j=1}^p\|\partial_{ij} \bU\|_{\rm F}^2)^{1/2}$.
\end{lemma}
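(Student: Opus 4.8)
The statement is a matrix-valued, second-order (``$\chi^2$-type'') Stein identity: its content is that the random matrix $p\bU^\top\bV-\sum_{j}\bA_j\bB_j$, built from the first-order Stein residuals $\bA_j=\sum_i\partial_{ij}\bU^\top\be_i-\bU^\top\bZ\be_j$ and $\bB_j=\sum_i\partial_{ij}\be_i^\top\bV-\be_j^\top\bZ^\top\bV$ of the columns $\bz_j=\bZ\be_j$, is small in expected Frobenius norm, at the scale of the second moments of $\bU,\bV$ and of their Jacobians. The plan is to first repackage the statement into a product of two residual matrices, then reduce it to repeated Gaussian integration by parts, and finally absorb the remainder using the first-order identity of \Cref{lem:steinX}.

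First I would collect the column residuals into matrices $\bP\in\R^{T\times p}$ and $\bQ\in\R^{p\times T}$ defined by $\bP\be_j=\bA_j$ and $\be_j^\top\bQ=\bB_j$, so that $\bP=\bN_U-\bU^\top\bZ$ and $\bQ=\bN_V-\bZ^\top\bV$, where $\bN_U,\bN_V$ are the divergence matrices with entries $(\bN_U)_{sj}=\sum_i\partial_{ij}(\bU^\top\be_i)_s$ and $(\bN_V)_{jt}=\sum_i\partial_{ij}(\be_i^\top\bV)_t$. Then $\sum_j\bA_j\bB_j=\bP\bQ$ and the goal becomes bounding $\E\fnorm{p\bU^\top\bV-\bP\bQ}$. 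Expanding the product gives $\bP\bQ=\bU^\top\bZ\bZ^\top\bV-\bN_U\bZ^\top\bV-\bU^\top\bZ\bN_V+\bN_U\bN_V$, so everything hinges on the leading Wishart-type term $\bU^\top\bZ\bZ^\top\bV$.

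Next I would apply Gaussian integration by parts to $\bU^\top\bZ\bZ^\top\bV$ one coordinate $z_{ij}$ at a time. Differentiating a single factor $z_{ij}$ produces three contributions: the diagonal term $\sum_j\E[z_{ij}z_{kj}]=p\,\delta_{ik}$, which yields $p\bU^\top\bV$ (this is exactly the ``$\chi^2$'' summed over the $p$ columns and the source of the coefficient $p$); the divergence terms $\bN_U\bZ^\top\bV$ and $\bU^\top\bZ\bN_V$, which are designed to cancel the corresponding cross terms in $\bP\bQ$; and a genuinely second-order remainder consisting of products of one Jacobian of $\bU$ (or $\bV$) with a $\bZ$-linear factor. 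After cancellation, $p\bU^\top\bV-\bP\bQ$ reduces to $\bN_U\bN_V$ minus this second-order remainder.

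The hard part will be controlling this remainder with the sharp $1+2\sqrt p$ factor. The remainder is driven by the deviation of $\bZ\bZ^\top$ from $p\bI_n$ (equivalently of $\norm{\bz_j}^2$ from $n$), whose fluctuation scale over the $p$ independent columns produces the $1+2\sqrt p$ prefactor; to make this rigorous I would feed the relevant $\bZ$-linear factors into \Cref{lem:steinX}, whose second-moment remainder is precisely of the form $\fnorm{\bU}^2\fnorm{\bV}^2$ plus Jacobian terms, and then apply Cauchy--Schwarz to split the resulting bound into the four quantities $\E[\fnorm{\bU}^4]^{1/2}$, $\E[\fnorm{\bV}^4]^{1/2}$, $\E[\norm{\bU}_\partial^4]^{1/2}$, $\E[\norm{\bV}_\partial^4]^{1/2}$ appearing on the right-hand side. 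The delicate bookkeeping is ensuring the divergence corrections cancel \emph{exactly} so that only a second-order remainder survives, since a naive single pass of Stein's identity leaves uncancelled first-order terms; a second, symmetric pass of integration by parts on the $\bV$ side is what closes this gap.
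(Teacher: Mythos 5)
The paper does not actually prove \Cref{lem:Chi2type}; it imports it verbatim from Lemma F.5 of \cite{bellec2024uncertainty}, so there is no in-paper argument to compare against. Judged on its own terms, your plan gets the algebraic skeleton right: writing $\sum_j \bA_j\bB_j=\bP\bQ$ with $\bP=\bN_U-\bU^\top\bZ$ and $\bQ=\bN_V-\bZ^\top\bV$, and identifying the diagonal $\sum_{j}\E[z_{ij}z_{kj}]=p\,\delta_{ik}$ in the Wishart term $\bU^\top\bZ\bZ^\top\bV$ as the source of $p\,\bU^\top\bV$. But the argument does not close. The central gap is that integration by parts yields an identity for the \emph{mean} $\E[p\bU^\top\bV-\bP\bQ]$, whereas the lemma bounds $\E\fnorm{p\bU^\top\bV-\bP\bQ}$, the expected norm of the random matrix itself. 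The whole content of the $2\sqrt p$ term is a control of the fluctuations of $\sum_{j=1}^p\bA_j\bB_j$ about its mean, and ``feed the $\bZ$-linear factors into \Cref{lem:steinX}'' does not supply it: \Cref{lem:steinX} bounds the second moment of the first-order residual $\bU^\top\bZ\bV-\sum_{ij}\partial_{ij}(\bU^\top\be_i\be_j^\top\bV)$, which is not the quantity whose variance you need here. Even at the level of expectations the cancellation you invoke is not exact: Stein applied to $z_{ij}$ in $(\bU^\top\bZ\bZ^\top\bV)_{st}$ produces $\sum_{i,j,k}U_{is}z_{kj}\partial_{ij}V_{kt}$, which differs from the cross term $(\bU^\top\bZ\bN_V)_{st}=\sum_{i,j,k}U_{is}z_{ij}\partial_{kj}V_{kt}$ by an index transposition, so a genuine remainder with a $\bZ$ factor survives and must itself be bounded.

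The proposed repair --- ``a second, symmetric pass of integration by parts'' --- is not available under the stated hypotheses. Differentiating the residual again, or invoking Gaussian Poincar\'e on its variance, requires derivatives of $\partial_{ij}\bU$ and $\partial_{ij}\bV$, i.e.\ second derivatives of $\bU,\bV$; the lemma assumes only local Lipschitzness, so these need not exist, and no bound on them appears on the right-hand side. A correct proof must control the fluctuation of $\sum_j\bA_j\bB_j$ using only $\fnorm{\bU}$, $\fnorm{\bV}$, $\norm{\bU}_{\partial}$, $\norm{\bV}_{\partial}$; that is precisely the nontrivial step carried out in Lemma F.5 of \cite{bellec2024uncertainty}, and it is the step your proposal leaves open.
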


We will use the above two lemmas, conditionally on $\bep$,  to bound the conditional moments of $\bTheta_1, \bTheta_2, \bTheta_3, \bTheta_4$ and $\bTheta_5$ given $\bep$. 

\paragraph{Bound of $\bTheta_1$.}
By the definition of $\bTheta_1$, we have
\begin{align*}
	\bF^\top \bX \bH - \sum_{i,j} \pdv{\bF^\top \be_i\be_j^\top \bH}{x_{ij}}
	&= \bF^\top \bX \bH + \tbK \bH^\top \bH - \bF^\top \bF \bW^\top + \bUpsilon_3 &&\mbox{by \eqref{eq:3}}\\
	&= \bTheta_1 + \bUpsilon_3.
\end{align*}
Applying \Cref{lem:steinX} conditionally on $\bep$ to 
$(\bZ, \bU, \bV) = (\bX, \bF, \bH)$ gives 
\begin{align*}
	\E[\fnorm{\bTheta_1}^2 \mid\bep]
	\lesssim~& 
	\E\Big[\norm[\Big]{\bF^\top \bX \bH - \sum_{i,j} \pdv{\bF^\top \be_i\be_j^\top \bH}{x_{ij}}}^2\mid\bep\Big] + \E[\fnorm{\bUpsilon_3}^2\mid\bep]\\
	\lesssim~& \E [\fnorm{\bF}^2 \fnorm{\bH}^2\mid\bep]+ \E \sum_{ij}\Big[
	\fnorm{\bH}^2\fnorm{ \pdv{\bF}{x_{ij}} }^2
	+ \fnorm{\bF}^2\fnorm{ \pdv{\bH}{x_{ij}} }^2 \mid\bep\Big] + \E[\fnorm{\bUpsilon_3}^2 \mid\bep]\\
	\le~& n C(T, \gamma, \eta_{\max}, c_0) (\delta^2 + \norm{\bb^*}^2),
\end{align*}
where the last line uses the moment bounds of $\fnorm{\bF}, \fnorm{\bH}$ in \Cref{lem:HF-moment}, the moment bounds of $\fnorm{\pdv{\bH}{x_{ij}}}$, $\fnorm{\pdv{\bF}{x_{ij}}}$ in \Cref{lem:HF-derivative-moment}, the moment bound of $\opnorm{\bUpsilon_3}$ in \Cref{lem:Upsilon-bound}. 

\paragraph{Bound of $\bTheta_2$.}
By the definition of $\bTheta_2$, we have by \eqref{eq:4}
\begin{align*}
	\bF^\top \bX \bX^\top\tbF - \sum_{i,j} \pdv{\bF^\top  \be_i\be_j^\top \bX^\top\tbF}{x_{ij}}
	&= \bF^\top \bX \bX^\top\tbF + \tbK\bH^\top \bX^\top \tbF - 
	p \bF^\top\tbF + \bF^\top \bF \hbA^\top + \bUpsilon_5 \\
	&= n\bTheta_2 + \bUpsilon_5.
\end{align*}
Applying \Cref{lem:steinX} conditionally on $\bep$ to 
$(\bZ, \bU, \bV) = (\bX, \bF, \bX^\top\tbF)$ gives 
\begin{align*}
	n^2 \E[\fnorm{\bTheta_2}^2\mid\bep]
	\lesssim~& 
	\E\Big[\norm[\Big]{\bF^\top \bX \bX^\top\tbF - \sum_{i,j} \pdv{\bF^\top  \be_i\be_j^\top \bX^\top\tbF}{x_{ij}}}^2\mid\bep\Big] + \E[\fnorm{\bUpsilon_5}^2\mid\bep]\\
	\lesssim~& \E [\fnorm{\bF}^2 \fnorm{\bX^\top\tbF}^2\mid\bep]+ \E \sum_{ij}\Big[
	\fnorm{\bX^\top\tbF}^2\fnorm{ \pdv{\bF}{x_{ij}} }^2
	+ \fnorm{\bF}^2\fnorm{ \pdv{\bX^\top\tbF}{x_{ij}} }^2 \mid\bep\Big] + \E[\fnorm{\bUpsilon_5}^2\mid\bep]\\
	\lesssim~& \E [\fnorm{\tbF}^4 \opnorm{\bX}^2\mid\bep]+ \E \Big[(1+\opnorm{\bX}^2)
	\fnorm{\tbF}^2\sum_{ij}\fnorm{ \pdv{\tbF}{x_{ij}} }^2
	\mid\bep\Big] + T \E[\opnorm{\bUpsilon_5}^2\mid\bep]\\
	\le~& n^3 C(T, \gamma, \eta_{\max}, c_0) (\delta^2 + \norm{\bb^*}^2).
\end{align*}
Here, the penultimate line uses $\fnorm{\bF}\le\fnorm{\tbF}$,
$\fnorm{\pdv{\bF}{x_{ij}}}\le \fnorm{\pdv{\tbF}{x_{ij}}}$, 
and $\fnorm{\pdv{\bX^\top\tbF}{x_{ij}}} = \fnorm{\be_j^\top \be_i^\top \pdv{\tbF}{x_{ij}} + \bX^\top \pdv{\tbF}{x_{ij}}}
\le \fnorm{\pdv{\tbF}{x_{ij}}} + \opnorm{\bX} \fnorm{\pdv{\tbF}{x_{ij}}}$. 
The last line uses the moment bounds of $\opnorm{\bX},\fnorm{\tbF}$ in \Cref{lem:HF-moment}, the moment bound of $\fnorm{\pdv{\tbF}{x_{ij}}}$ in \Cref{lem:HF-derivative-moment}, the moment bound of $\opnorm{\bUpsilon_5}$ in \Cref{lem:Upsilon-bound}.

\paragraph{Bound of $\bTheta_3$.}
By the definition of $\bTheta_3$, we have
\begin{align*}
	&\bH^\top \bX^\top \bX \bH - \sum_{i,j} \pdv{\bH^\top \bX^\top \be_i\be_j^\top \bH}{x_{ij}}\\
	=~& \bH^\top \bX^\top \bX \bH - (n\bI_T -\tbA) \bH^\top \bH - 
	\bH^\top \bX^\top \bF \bW^\top + \bUpsilon_4 &&\mbox{by \eqref{eq:4}}\\
	=~& \bTheta_3 + \bUpsilon_4.
\end{align*}
Applying \Cref{lem:steinX} conditionally on $\bep$ to 
$(\bZ, \bU, \bV) = (\bX, \bX\bH, \bH)$ gives 
\begin{align*}
	\E[\fnorm{\bTheta_3}^2\mid \bep]
	\lesssim~& 
	\E\Big[\norm[\Big]{\bH^\top \bX^\top \bX \bH - \sum_{i,j} \pdv{\bH^\top \bX^\top \be_i\be_j^\top \bH}{x_{ij}}}^2 \mid \bep\Big] + \E[\fnorm{\bUpsilon_4}^2\mid \bep]\\
	\lesssim~& \E [\fnorm{\bX\bH}^2 \fnorm{\bH}^2\mid \bep]+ \E \sum_{ij}\Big[
	\fnorm{\bX\bH}^2\fnorm{ \pdv{\bH}{x_{ij}} }^2
	+ \fnorm{\bH}^2\fnorm{ \pdv{\bX\bH}{x_{ij}} }^2\mid \bep\Big] + \E[\fnorm{\bUpsilon_4}^2\mid \bep]\\
	\lesssim~& \E [\opnorm{\bX}^2 \fnorm{\bH}^4\mid \bep]+ \E \Big[(1 + \opnorm{\bX}^2)\fnorm{\bH}^2\sum_{ij}
	\fnorm{ \pdv{\bH}{x_{ij}} }^2 \mid \bep\Bigr] + 
	T \E[\opnorm{\bUpsilon_4}^2\mid \bep]\\
	\le & n C(T, \gamma, \eta_{\max}, c_0) (\delta^2 + \norm{\bb^*}^2).
\end{align*}
Here, the penultimate line uses 
$
\fnorm{\pdv{\bX\bH}{x_{ij}}}
= \fnorm{\be_i\be_j^\top \pdv{\bH}{x_{ij}} + \bX \pdv{\bH}{x_{ij}}}
\le (1 + \opnorm{\bX}) \fnorm{\pdv{\bH}{x_{ij}}},
$
and the last line uses the moment bounds of $\opnorm{\bX}, \fnorm{\bH}$ in \Cref{lem:HF-moment}, the moment bounds of $\fnorm{\pdv{\bH}{x_{ij}}}$ in \Cref{lem:HF-derivative-moment}, the moment bound of $\opnorm{\bUpsilon_4}$ in \Cref{lem:Upsilon-bound}.

\paragraph{Bound of $\bTheta_4$.}
By definition, we have 
\begin{align*}
	\sum_{i=1}^n \pdv{\bF^\top \be_i}{x_{ij}} 
	= - (\tbK \bH^\top + \bUpsilon_1^\top) \be_j 
	\text{ and }
	\sum_{i=1}^n \pdv{\tbF^\top \be_i}{x_{ij}} 
	= - (\hbK \bH^\top + \tbUpsilon_1^\top) \be_j.
\end{align*}
Using $\sum_{j=1}^p \be_j \be_j^\top = \bI_p$, we find
\begin{align*}
	&\sum_{j=1}^p 
	\bigl(\sum_{i=1}^n \pdv{\bF^\top \be_i}{x_{ij}} - \bF^\top \bX \be_j\bigr)\bigl(\sum_{i=1}^n \pdv{\tbF^\top \be_i}{x_{ij}} - \tbF^\top \bX \be_j\bigr)^\top\\
	=~& (\tbK \bH^\top + \bF^\top \bX + \bUpsilon_1^\top)
	(\hbK \bH^\top + \tbF^\top \bX + \tbUpsilon_1^\top).
\end{align*}
This further implies
\begin{align*}
	&p \bF^\top \tbF - \sum_{j=1}^p 
	\bigl(\sum_{i=1}^n \pdv{\bF^\top \be_i}{x_{ij}} - \bF^\top \bX \be_j\bigr)\bigl(\sum_{i=1}^n \pdv{\tbF^\top \be_i}{x_{ij}} - \tbF^\top \bX \be_j\bigr)^\top\\
	=~& n \bTheta_4 +  
	\underbrace{\bUpsilon_1^\top
	(\hbK \bH^\top + \tbF^\top \bX) 
	+(\tbK \bH^\top + \bF^\top \bX) \tbUpsilon_1^\top + \bUpsilon_1^\top \tbUpsilon_1^\top}_{\tbUpsilon_4}.
\end{align*}
Applying \Cref{lem:Chi2type} conditionally on $\bep$ to $(\bZ, \bU, \bV) = (\bX, \bF, \tbF)$, 
\begin{align*}
	n \E[\fnorm{\bTheta_4}\mid\bep]
	\lesssim~& 
	\E \Bigl[\fnorm[\Big]{p \bF^\top \tbF - \sum_{j=1}^p 
	\bigl(\sum_{i=1}^n \pdv{\bF^\top \be_i}{x_{ij}} - \bF^\top \bX \be_j\bigr)\bigl(\sum_{i=1}^n \pdv{\tbF^\top \be_i}{x_{ij}} - \tbF^\top \bX \be_j\bigr)^\top}\mid\bep\Bigr] 
	+ \E [\fnorm{\tbUpsilon_4}\mid\bep]\\
	\lesssim~& 
	(1 + 2\sqrt{p}) 
        \bigl( 
            \E [\fnorm{\bF}^4\mid\bep]^{1/2}
            + \E [\fnorm{\tbF}^4\mid\bep]^{1/2}
            + \E [\norm{\bF}^4_{\partial}\mid\bep]^{1/2}
            + \E [\norm{\tbF}^4_{\partial}\mid\bep]^{1/2}
            \bigr) 
	+ T\E[\opnorm{\tbUpsilon_4}^2\mid\bep]\\
	\le~ & n^{3/2} C(T, \gamma, \eta_{\max}, c_0) (\delta^2 + \norm{\bb^*}^2).
\end{align*}
Here, the last inequality uses the moment bounds of $\fnorm{\bF}, \fnorm{\tbF}$ in \Cref{lem:HF-moment}, and the moment bounds of 
$\norm{\bF}^2_{\partial}$, $\norm{\tbF}^2_{\partial}$ in \Cref{lem:HF-derivative-moment}, and the moment bound of $\opnorm{\tbUpsilon_4}$ in \Cref{lem:Upsilon-bound}. 
\paragraph{Bound of $\bTheta_5$.}
By the identity \eqref{eq:2}, we have 
\begin{align*}
	&\sum_{i=1}^n 
	\bigl(\sum_{j=1}^p \pdv{\bH^\top \be_j}{x_{ij}} - \bH^\top \bX^\top \be_i\bigr)
	\bigl(\sum_{j=1}^p \pdv{\bH^\top \be_j}{x_{ij}} - \bH^\top \bX^\top \be_i\bigr)^\top\\
	=~& (\bW\bF^\top -\bH^\top\bX^\top - \bUpsilon_2^\top)(\bW\bF^\top -\bH^\top\bX^\top - \bUpsilon_2^\top)^\top.
\end{align*}
By the definition of $\bTheta_5$, we have 
\begin{align*}
	&~n \bH^\top \bH - \sum_{i=1}^n 
	\bigl(\sum_{j=1}^p \pdv{\bH^\top \be_j}{x_{ij}} - \bH^\top \bX^\top \be_i\bigr)
	\bigl(\sum_{j=1}^p \pdv{\bH^\top \be_j}{x_{ij}} - \bH^\top \bX^\top \be_i\bigr)^\top\\
	=~& \bTheta_5 + 
	\underbrace{\bUpsilon_2^\top (\bW\bF^\top -\bH^\top\bX^\top)^\top
	+ (\bW\bF^\top -\bH^\top\bX^\top) \bUpsilon_2
	- \bUpsilon_2^\top \bUpsilon_2}_{\tbUpsilon_5}.
\end{align*}
Here 
$\bUpsilon_2 = \sum_{j} 
((\be_j^\top \bH)\otimes \bI_n)
\calD \calS
(\bI_T\otimes \bX)
\bGamma^\top 
(\bI_T \otimes \be_j)$. 

Applying \Cref{lem:Chi2type} conditionally on $\bep$ to 
$(\bZ, \bU, \bV) = (\bX^\top, \bH, \bH)$ (\ie, consider the mapping from $\R^{p\times n}$ to $\R^{p\times T}$: $\bX^\top \mapsto \bH$) gives 
\begin{align*}
	&\E[\fnorm{\bTheta_5}\mid\bep]\\
	\le~& 
	\E \Bigl[\fnorm[\Big]{n \bH^\top \bH - \sum_{i=1}^n 
	\bigl(\sum_{j=1}^p \pdv{\bH^\top \be_j}{x_{ij}} - \bH^\top \bX^\top \be_i\bigr)
	\bigl(\sum_{j=1}^p \pdv{\bH^\top \be_j}{x_{ij}} - \bH^\top \bX^\top \be_i\bigr)^\top}\mid\bep\Bigr] 
	+ \E [\fnorm{\tbUpsilon_5}\mid\bep]\\
	\lesssim~& 
	(1 + 2\sqrt{p}) 
		\bigl( 
			\E [\fnorm{\bH}^4\mid\bep]^{1/2}
			+ \E [\norm{\bH}^4_{\partial}\mid\bep]^{1/2}
			\bigr) 
	+ T\E[\opnorm{\tbUpsilon_5}^2\mid\bep]\\
	\le~ & n^{1/2} C(T, \gamma, \eta_{\max}, c_0) (\delta^2 + \norm{\bb^*}^2).
\end{align*}
Here, the last line use the moment bounds of $\fnorm{\bH}$ in \Cref{lem:HF-moment}, the moment bound of $\norm{\bH}^2_{\partial}$ in \Cref{lem:HF-derivative-moment}, and the moment bound of $\opnorm{\tbUpsilon_5}$ in \Cref{lem:Upsilon-bound}.
\paragraph{Bound of $\bTheta_6$.}

Using \eqref{eq:2}, we have 
\begin{align*}
	\sum_{i=1}^n\sum_{j=1}^p
	\pdv{\bE^\top\be_i\be_j^\top\bH}{x_{ij}}
	= \bE^\top (\bF\bW^\top - \bUpsilon_2).
\end{align*}
It follows that 
\begin{align*}
	\bE^\top \bX \bH - \sum_{i=1}^n\sum_{j=1}^p \pdv{\bE^\top\be_i\be_j^\top\bH}{x_{ij}}
	=~& \bE^\top \bX \bH - \bE^\top (\bF\bW^\top - \bUpsilon_2)\\
	=~& \fnorm{\bE} \bTheta_6 + \bE^\top \bUpsilon_2.
\end{align*}
Thus, using $\tbE = \bE/\fnorm{\bE}$, we have 
$$
\bTheta_6 = \tbE^\top \bX \bH - \sum_{i=1}^n\sum_{j=1}^p \pdv{\tbE^\top\be_i\be_j^\top\bH}{x_{ij}} - \tbE^\top \bUpsilon_2.\\
$$
Applying \Cref{lem:steinX}
conditionally on $\bep$ to $(\bZ, \bU, \bV) = (\bX, \tbE,\bH)$gives 
\begin{align*}
	\E[\fnorm{\bTheta_6}^2\mid\bep]
	&\lesssim
	\E\Big[\norm[\Big]{\tbE^\top \bX \bH - \sum_{i=1}^n\sum_{j=1}^p \pdv{\tbE^\top\be_i\be_j^\top\bH}{x_{ij}}}^2\mid\bep\Big] + \E[\fnorm{\tbE^\top \bUpsilon_2}^2\mid\bep]\\
	&\lesssim \E [\fnorm{\tbE}^2 \fnorm{\bH}^2\mid\bep] 
	+ \E\Bigl[\fnorm{\tbE}^2\fnorm{ \pdv{\bH}{x_{ij}} }^2\mid\bep\Bigr] + \E[\fnorm{\tbE^\top \bUpsilon_2}^2\mid\bep]\\
	& \le \E [\fnorm{\bH}^2\mid\bep] 
	+ \E\Bigl[\fnorm{ \pdv{\bH}{x_{ij}} }^2\mid\bep\Bigr] + \E[\opnorm{\bUpsilon_2}^2\mid\bep]\\
	&\le C(T, \gamma, \eta_{\max}, c_0) (\delta^2 + \norm{\bb^*}^2).
\end{align*}
Here, the last line uses the moment bound of $\fnorm{\bH}$ in \Cref{lem:HF-moment}, the moment bounds of $\fnorm{\pdv{\bH}{x_{ij}}}$ in \Cref{lem:HF-derivative-moment}, and the moment bound of $\opnorm{\bUpsilon_2}$ in \Cref{lem:Upsilon-bound}.
This finishes the proof of \Cref{lem:Theta-bound}.


\newpage
\section*{NeurIPS Paper Checklist}

\begin{enumerate}

\item {\bf Claims}
    \item[] Question: Do the main claims made in the abstract and introduction accurately reflect the paper's contributions and scope?
    \item[] Answer: \answerYes{} 
    \item[] Justification: The abstract and introduction clearly state the claims made in the paper.
    \item[] Guidelines:
    \begin{itemize}
        \item The answer NA means that the abstract and introduction do not include the claims made in the paper.
        \item The abstract and/or introduction should clearly state the claims made, including the contributions made in the paper and important assumptions and limitations. A No or NA answer to this question will not be perceived well by the reviewers. 
        \item The claims made should match theoretical and experimental results, and reflect how much the results can be expected to generalize to other settings. 
        \item It is fine to include aspirational goals as motivation as long as it is clear that these goals are not attained by the paper. 
    \end{itemize}

\item {\bf Limitations}
    \item[] Question: Does the paper discuss the limitations of the work performed by the authors?
    \item[] Answer: \answerYes{} 
    \item[] Justification: Our results only holds under several assumptions discussed in the paper.
    \item[] Guidelines:
    \begin{itemize}
        \item The answer NA means that the paper has no limitation while the answer No means that the paper has limitations, but those are not discussed in the paper. 
        \item The authors are encouraged to create a separate "Limitations" section in their paper.
        \item The paper should point out any strong assumptions and how robust the results are to violations of these assumptions (e.g., independence assumptions, noiseless settings, model well-specification, asymptotic approximations only holding locally). The authors should reflect on how these assumptions might be violated in practice and what the implications would be.
        \item The authors should reflect on the scope of the claims made, e.g., if the approach was only tested on a few datasets or with a few runs. In general, empirical results often depend on implicit assumptions, which should be articulated.
        \item The authors should reflect on the factors that influence the performance of the approach. For example, a facial recognition algorithm may perform poorly when image resolution is low or images are taken in low lighting. Or a speech-to-text system might not be used reliably to provide closed captions for online lectures because it fails to handle technical jargon.
        \item The authors should discuss the computational efficiency of the proposed algorithms and how they scale with dataset size.
        \item If applicable, the authors should discuss possible limitations of their approach to address problems of privacy and fairness.
        \item While the authors might fear that complete honesty about limitations might be used by reviewers as grounds for rejection, a worse outcome might be that reviewers discover limitations that aren't acknowledged in the paper. The authors should use their best judgment and recognize that individual actions in favor of transparency play an important role in developing norms that preserve the integrity of the community. Reviewers will be specifically instructed to not penalize honesty concerning limitations.
    \end{itemize}

\item {\bf Theory Assumptions and Proofs}
    \item[] Question: For each theoretical result, does the paper provide the full set of assumptions and a complete (and correct) proof?
    \item[] Answer: \answerYes{} 
    \item[] Justification: All the proofs are provided in the supplemental material.
    \item[] Guidelines:
    \begin{itemize}
        \item The answer NA means that the paper does not include theoretical results. 
        \item All the theorems, formulas, and proofs in the paper should be numbered and cross-referenced.
        \item All assumptions should be clearly stated or referenced in the statement of any theorems.
        \item The proofs can either appear in the main paper or the supplemental material, but if they appear in the supplemental material, the authors are encouraged to provide a short proof sketch to provide intuition. 
        \item Inversely, any informal proof provided in the core of the paper should be complemented by formal proofs provided in appendix or supplemental material.
        \item Theorems and Lemmas that the proof relies upon should be properly referenced. 
    \end{itemize}

    \item {\bf Experimental Result Reproducibility}
    \item[] Question: Does the paper fully disclose all the information needed to reproduce the main experimental results of the paper to the extent that it affects the main claims and/or conclusions of the paper (regardless of whether the code and data are provided or not)?
    \item[] Answer: \answerYes{} 
    \item[] Justification: All the code needed to reproduce the results is in the supplemental material.
    \item[] Guidelines:
    \begin{itemize}
        \item The answer NA means that the paper does not include experiments.
        \item If the paper includes experiments, a No answer to this question will not be perceived well by the reviewers: Making the paper reproducible is important, regardless of whether the code and data are provided or not.
        \item If the contribution is a dataset and/or model, the authors should describe the steps taken to make their results reproducible or verifiable. 
        \item Depending on the contribution, reproducibility can be accomplished in various ways. For example, if the contribution is a novel architecture, describing the architecture fully might suffice, or if the contribution is a specific model and empirical evaluation, it may be necessary to either make it possible for others to replicate the model with the same dataset, or provide access to the model. In general. releasing code and data is often one good way to accomplish this, but reproducibility can also be provided via detailed instructions for how to replicate the results, access to a hosted model (e.g., in the case of a large language model), releasing of a model checkpoint, or other means that are appropriate to the research performed.
        \item While NeurIPS does not require releasing code, the conference does require all submissions to provide some reasonable avenue for reproducibility, which may depend on the nature of the contribution. For example
        \begin{enumerate}
            \item If the contribution is primarily a new algorithm, the paper should make it clear how to reproduce that algorithm.
            \item If the contribution is primarily a new model architecture, the paper should describe the architecture clearly and fully.
            \item If the contribution is a new model (e.g., a large language model), then there should either be a way to access this model for reproducing the results or a way to reproduce the model (e.g., with an open-source dataset or instructions for how to construct the dataset).
            \item We recognize that reproducibility may be tricky in some cases, in which case authors are welcome to describe the particular way they provide for reproducibility. In the case of closed-source models, it may be that access to the model is limited in some way (e.g., to registered users), but it should be possible for other researchers to have some path to reproducing or verifying the results.
        \end{enumerate}
    \end{itemize}

\item {\bf Open access to data and code}
    \item[] Question: Does the paper provide open access to the data and code, with sufficient instructions to faithfully reproduce the main experimental results, as described in supplemental material?
    \item[] Answer: \answerYes{} 
    \item[] Justification: All the code are provided in the supplemental material.
    \item[] Guidelines: 
    \begin{itemize}
        \item The answer NA means that paper does not include experiments requiring code.
        \item Please see the NeurIPS code and data submission guidelines (\url{https://nips.cc/public/guides/CodeSubmissionPolicy}) for more details.
        \item While we encourage the release of code and data, we understand that this might not be possible, so “No” is an acceptable answer. Papers cannot be rejected simply for not including code, unless this is central to the contribution (e.g., for a new open-source benchmark).
        \item The instructions should contain the exact command and environment needed to run to reproduce the results. See the NeurIPS code and data submission guidelines (\url{https://nips.cc/public/guides/CodeSubmissionPolicy}) for more details.
        \item The authors should provide instructions on data access and preparation, including how to access the raw data, preprocessed data, intermediate data, and generated data, etc.
        \item The authors should provide scripts to reproduce all experimental results for the new proposed method and baselines. If only a subset of experiments are reproducible, they should state which ones are omitted from the script and why.
        \item At submission time, to preserve anonymity, the authors should release anonymized versions (if applicable).
        \item Providing as much information as possible in supplemental material (appended to the paper) is recommended, but including URLs to data and code is permitted.
    \end{itemize}

\item {\bf Experimental Setting/Details}
    \item[] Question: Does the paper specify all the training and test details (e.g., data splits, hyperparameters, how they were chosen, type of optimizer, etc.) necessary to understand the results?
    \item[] Answer: \answerYes{} 
    \item[] Justification: The choices of step size for GD and SGD are provided in Section 4.
    \item[] Guidelines:
    \begin{itemize}
        \item The answer NA means that the paper does not include experiments.
        \item The experimental setting should be presented in the core of the paper to a level of detail that is necessary to appreciate the results and make sense of them.
        \item The full details can be provided either with the code, in appendix, or as supplemental material.
    \end{itemize}

\item {\bf Experiment Statistical Significance}
    \item[] Question: Does the paper report error bars suitably and correctly defined or other appropriate information about the statistical significance of the experiments?
    \item[] Answer: \answerYes{} 
    \item[] Justification: All the figures include the 2-standard error bars.
    \item[] Guidelines:
    \begin{itemize}
        \item The answer NA means that the paper does not include experiments.
        \item The authors should answer "Yes" if the results are accompanied by error bars, confidence intervals, or statistical significance tests, at least for the experiments that support the main claims of the paper.
        \item The factors of variability that the error bars are capturing should be clearly stated (for example, train/test split, initialization, random drawing of some parameter, or overall run with given experimental conditions).
        \item The method for calculating the error bars should be explained (closed form formula, call to a library function, bootstrap, etc.)
        \item The assumptions made should be given (e.g., Normally distributed errors).
        \item It should be clear whether the error bar is the standard deviation or the standard error of the mean.
        \item It is OK to report 1-sigma error bars, but one should state it. The authors should preferably report a 2-sigma error bar than state that they have a 96\% CI, if the hypothesis of Normality of errors is not verified.
        \item For asymmetric distributions, the authors should be careful not to show in tables or figures symmetric error bars that would yield results that are out of range (e.g. negative error rates).
        \item If error bars are reported in tables or plots, The authors should explain in the text how they were calculated and reference the corresponding figures or tables in the text.
    \end{itemize}

\item {\bf Experiments Compute Resources}
    \item[] Question: For each experiment, does the paper provide sufficient information on the computer resources (type of compute workers, memory, time of execution) needed to reproduce the experiments?
    \item[] Answer: \answerYes{} 
    \item[] Justification: It is provided in the README file of the supplemental material.
    \item[] Guidelines:
    \begin{itemize}
        \item The answer NA means that the paper does not include experiments.
        \item The paper should indicate the type of compute workers CPU or GPU, internal cluster, or cloud provider, including relevant memory and storage.
        \item The paper should provide the amount of compute required for each of the individual experimental runs as well as estimate the total compute. 
        \item The paper should disclose whether the full research project required more compute than the experiments reported in the paper (e.g., preliminary or failed experiments that didn't make it into the paper). 
    \end{itemize}
    
\item {\bf Code Of Ethics}
    \item[] Question: Does the research conducted in the paper conform, in every respect, with the NeurIPS Code of Ethics \url{https://neurips.cc/public/EthicsGuidelines}?
    \item[] Answer: \answerYes{} 
    \item[] Justification: 
    \item[] Guidelines:
    \begin{itemize}
        \item The answer NA means that the authors have not reviewed the NeurIPS Code of Ethics.
        \item If the authors answer No, they should explain the special circumstances that require a deviation from the Code of Ethics.
        \item The authors should make sure to preserve anonymity (e.g., if there is a special consideration due to laws or regulations in their jurisdiction).
    \end{itemize}

\item {\bf Broader Impacts}
    \item[] Question: Does the paper discuss both potential positive societal impacts and negative societal impacts of the work performed?
    \item[] Answer: \answerNA{} 
    \item[] Justification: Given the theoretical nature of our work, it does not have any negative societal impact.
    \item[] Guidelines:
    \begin{itemize}
        \item The answer NA means that there is no societal impact of the work performed.
        \item If the authors answer NA or No, they should explain why their work has no societal impact or why the paper does not address societal impact.
        \item Examples of negative societal impacts include potential malicious or unintended uses (e.g., disinformation, generating fake profiles, surveillance), fairness considerations (e.g., deployment of technologies that could make decisions that unfairly impact specific groups), privacy considerations, and security considerations.
        \item The conference expects that many papers will be foundational research and not tied to particular applications, let alone deployments. However, if there is a direct path to any negative applications, the authors should point it out. For example, it is legitimate to point out that an improvement in the quality of generative models could be used to generate deepfakes for disinformation. On the other hand, it is not needed to point out that a generic algorithm for optimizing neural networks could enable people to train models that generate Deepfakes faster.
        \item The authors should consider possible harms that could arise when the technology is being used as intended and functioning correctly, harms that could arise when the technology is being used as intended but gives incorrect results, and harms following from (intentional or unintentional) misuse of the technology.
        \item If there are negative societal impacts, the authors could also discuss possible mitigation strategies (e.g., gated release of models, providing defenses in addition to attacks, mechanisms for monitoring misuse, mechanisms to monitor how a system learns from feedback over time, improving the efficiency and accessibility of ML).
    \end{itemize}
    
\item {\bf Safeguards}
    \item[] Question: Does the paper describe safeguards that have been put in place for responsible release of data or models that have a high risk for misuse (e.g., pretrained language models, image generators, or scraped datasets)?
    \item[] Answer: \answerNA{} 
    \item[] Justification: The paper is of the theoretical nature, it does not invent any new models or datasets.
    \item[] Guidelines:
    \begin{itemize}
        \item The answer NA means that the paper poses no such risks.
        \item Released models that have a high risk for misuse or dual-use should be released with necessary safeguards to allow for controlled use of the model, for example by requiring that users adhere to usage guidelines or restrictions to access the model or implementing safety filters. 
        \item Datasets that have been scraped from the Internet could pose safety risks. The authors should describe how they avoided releasing unsafe images.
        \item We recognize that providing effective safeguards is challenging, and many papers do not require this, but we encourage authors to take this into account and make a best faith effort.
    \end{itemize}

\item {\bf Licenses for existing assets}
    \item[] Question: Are the creators or original owners of assets (e.g., code, data, models), used in the paper, properly credited and are the license and terms of use explicitly mentioned and properly respected?
    \item[] Answer: \answerNA{} 
    \item[] Justification: We write all the code by ourselves, and did not use other code or data. 
    \item[] Guidelines:
    \begin{itemize}
        \item The answer NA means that the paper does not use existing assets.
        \item The authors should cite the original paper that produced the code package or dataset.
        \item The authors should state which version of the asset is used and, if possible, include a URL.
        \item The name of the license (e.g., CC-BY 4.0) should be included for each asset.
        \item For scraped data from a particular source (e.g., website), the copyright and terms of service of that source should be provided.
        \item If assets are released, the license, copyright information, and terms of use in the package should be provided. For popular datasets, \url{paperswithcode.com/datasets} has curated licenses for some datasets. Their licensing guide can help determine the license of a dataset.
        \item For existing datasets that are re-packaged, both the original license and the license of the derived asset (if it has changed) should be provided.
        \item If this information is not available online, the authors are encouraged to reach out to the asset's creators.
    \end{itemize}

\item {\bf New Assets}
    \item[] Question: Are new assets introduced in the paper well documented and is the documentation provided alongside the assets?
    \item[] Answer: \answerNA{} 
    \item[] Justification: 
    \item[] Guidelines:
    \begin{itemize}
        \item The answer NA means that the paper does not release new assets.
        \item Researchers should communicate the details of the dataset/code/model as part of their submissions via structured templates. This includes details about training, license, limitations, etc. 
        \item The paper should discuss whether and how consent was obtained from people whose asset is used.
        \item At submission time, remember to anonymize your assets (if applicable). You can either create an anonymized URL or include an anonymized zip file.
    \end{itemize}

\item {\bf Crowdsourcing and Research with Human Subjects}
    \item[] Question: For crowdsourcing experiments and research with human subjects, does the paper include the full text of instructions given to participants and screenshots, if applicable, as well as details about compensation (if any)? 
    \item[] Answer: \answerNA{} 
    \item[] Justification: 
    \item[] Guidelines:
    \begin{itemize}
        \item The answer NA means that the paper does not involve crowdsourcing nor research with human subjects.
        \item Including this information in the supplemental material is fine, but if the main contribution of the paper involves human subjects, then as much detail as possible should be included in the main paper. 
        \item According to the NeurIPS Code of Ethics, workers involved in data collection, curation, or other labor should be paid at least the minimum wage in the country of the data collector. 
    \end{itemize}

\item {\bf Institutional Review Board (IRB) Approvals or Equivalent for Research with Human Subjects}
    \item[] Question: Does the paper describe potential risks incurred by study participants, whether such risks were disclosed to the subjects, and whether Institutional Review Board (IRB) approvals (or an equivalent approval/review based on the requirements of your country or institution) were obtained?
    \item[] Answer: \answerNA{} 
    \item[] Justification: 
    \item[] Guidelines:
    \begin{itemize}
        \item The answer NA means that the paper does not involve crowdsourcing nor research with human subjects.
        \item Depending on the country in which research is conducted, IRB approval (or equivalent) may be required for any human subjects research. If you obtained IRB approval, you should clearly state this in the paper. 
        \item We recognize that the procedures for this may vary significantly between institutions and locations, and we expect authors to adhere to the NeurIPS Code of Ethics and the guidelines for their institution. 
        \item For initial submissions, do not include any information that would break anonymity (if applicable), such as the institution conducting the review.
    \end{itemize}

\end{enumerate}

\end{document}